\numberwithin{equation}{section}
\setlist{leftmargin=20pt}
\begin{document}
\newcommand\footnotemarkfromtitle[1]{%
\renewcommand{\thefootnote}{\fnsymbol{footnote}}%
\footnotemark[#1]%
\renewcommand{\thefootnote}{\arabic{footnote}}}

\title{\emph{A posteriori} error analysis and adaptivity of
a space-time finite element method for the wave equation in second order formulation}

\author{Zhaonan Dong\footnotemark[1],
Emmanuil H. Georgoulis\footnotemark[2],
Lorenzo Mascotto\footnotemark[3],
and Zuodong Wang\footnotemark[4]
}

\date{}

\maketitle

\renewcommand{\thefootnote}{\fnsymbol{footnote}}

\footnotetext[1]{Inria, 48 Rue Barrault, 75647 Paris, France and CERMICS, Ecole nationale des ponts et chauss\'ees, IP Paris, CNRS, 6 \& 8 avenue B.~Pascal, 77455 Marne-la-Vall\'{e}e, France (zhaonan.dong@inria.fr)}%
\footnotetext[2]{The Maxwell Institute for Mathematical Sciences and Department of Mathematics, Heriot-Watt University,   Edinburgh EH14 4AS, United Kingdom, and Department of Mathematics, School of Applied Mathematical and Physical Sciences, National Technical University of Athens, Zografou 15780, Greece, and IACM-FORTH, Greece. (E.Georgoulis@hw.ac.uk)}%
\footnotetext[3]{Dipartimento di Matematica e Applicazioni, Universit\`a degli Studi di Milano-Bicocca, 20125 Milan, Italy; IMATI-CNR, 27100, Pavia, Italy; Fakult\"at f\"ur Mathematik, Universit\"at Wien, 1090 Vienna, Austria (lorenzo.mascotto@unimib.it)}
\footnotetext[4]{CERMICS, Ecole nationale des ponts et chauss\'ees, IP Paris, 6 \& 8 avenue B.~Pascal, 77455 Marne-la-Vall\'{e}e, France and Inria, 48 Rue Barrault, 75647 Paris, France (zuodong.wang@inria.fr)}%

\renewcommand{\thefootnote}{\arabic{footnote}}
\begin{abstract}
\noindent We establish rigorous \emph{a posteriori} error bounds for a space-time finite element method
of arbitrary order discretising linear wave problems in second order formulation.
The method combines
standard finite elements in space and
continuous piecewise polynomials in time
with an upwind discontinuous Galerkin-type approximation
for the second temporal derivative.
The proposed scheme accepts dynamic mesh modification,
as required by space-time adaptive algorithms, resulting in
a discontinuous temporal discretisation when mesh changes occur.
We prove \emph{a posteriori} error bounds in the $L^\infty(L^2)$ norm,
using carefully designed temporal and spatial reconstructions;
explicit control on the constants
(including the spatial and temporal orders of the method)
in those error bounds is shown.
The convergence behaviour of an error estimator
is verified numerically,
also taking into account the effect of the mesh change.
A space-time adaptive algorithm is proposed and tested numerically.

\medskip
\noindent \textbf{Keywords.}
Wave equation;
space-time methods;
finite element method;
a posteriori error analysis;
adaptive algorithm.

\medskip
\noindent \textbf{MSC.} 65N30, 65M50; 65M60; 65J10.
\end{abstract}

\section{Introduction}\label{sec:introduction}
Solutions to \revr{second order} hyperbolic equations are often characterised by highly localised spatio-temporal features, such as travelling fronts, pulses, and their interactions.
Their numerical approximation over large domains and for long times is, therefore, particularly challenging: standard numerical methods,
which are based on quasi-uniform time-steps and spatial mesh-sizes,
should employ spatio-temporal grids/meshes
that are sufficiently fine to capture localised features
that may result in  very high numbers of degrees of freedom.
To address the computational complexity, wave simulations are typically performed
using explicit time-stepping methods, possibly combined with modern
computer architectures, e.g., implementation on graphics processing units, etc.

Explicit schemes require CFL-type time-step size restrictions to ensure stable discretisations, which are typically further compounded when non-linearities are present.  Aiming to remove the CFL-type restrictions and to provide localised spatio-temporal resolution, implicit space-time finite element methods have been developed, since the late 1980s \cite{Hughes-Hulbert:1988,Hulbert-Hughes:1990, Johnson:1993, Donald93, Yang:1995, DonaldPeterson96}
and has continued to this day in various settings \cite{Karakashian-Makridakis:2005,Walkington:2014,DorFinWie16,BanGeorLij17,MoiPer18,KreMoiPerSch16,MoiPer18, BanMoiPerSch21,LisMoiSto23,SteinbachZank17,SteinbachZank20, Ferrari-Fraschini-Loli-Perugia:2025, Ferrari-Fraschini:2026, banjai2025space}. 
Of particular interest to the present work is the method presented
in~\cite{Walkington:2014},
employing discrete spaces of continuous piecewise polynomials in time,
tensorised with $H^1$ conforming finite element functions
in space over static spatial simplicial and box-type meshes;
a corresponding scheme for a nonlinear wave problem
is given in~\cite{Gomez-Nikolic:2024}.

From their inception, a central argument to alleviate the additional computational
overhead of implicit space-time methods has been their ability to accept locally variable
meshes \cite{Hughes-Hulbert:1988,Hulbert-Hughes:1990,Johnson:1993}.
However, typically the local resolution requirements are not known \emph{a priori}.
One way to address this issue is the derivation of \emph{a posteriori} error estimates
or \emph{a posteriori} error bounds in various norms.
Rigorous \emph{a posteriori} error analysis is very well developed in the context
of elliptic (see, e.g.,~\cite{Verfurth:2013} and the references therein)
and parabolic problems (see ~\cite{EEHJ, MR2404052,Verfurth:2013}
and the reference therein);
fewer results are available for wave problems.
In~\cite{Johnson:1993}, Johnson proved an \emph{a posteriori} error bound based
on a duality argument for a space-time method employing
discontinuous Galerkin time-stepping and finite elements in space.
An \emph{a posteriori} error indicator via super-convergence points is presented in \cite{Adjerid:2002}.
Goal-oriented error estimation and adaptivity frameworks are
presented in~\cite{Bangerth-Geiger-Rannacher:2010,Bangerth-Rannacher:1999,Bangerth-Rannacher:2001}.
In~\cite{Bernardi-Suli:2005} \emph{a posteriori} error bounds in the $L^\infty(H^1)$
and final time $L^2$ norms for a backward Euler time-stepping with finite
elements in space are proven employing energy arguments.
For the same method, $L^\infty(L^2)$ norm \emph{a posteriori} error bounds are proven
in~\cite{Georgoulis-Lakkis-Makridakis:2013} employing appropriate space and time reconstructions.
Bounds for time discretisations  by the leap-frog and cosine methods are proven in~\cite{Georgoulis-Lakkis-Makridakis-Virtanen:2016}.
In~\cite{ChaumontErn2025}, \emph{a posteriori} error bounds in the damped energy norm
($H^1(L^2)$ + $L^2(H^1)$) for the leap-frog scheme combined with a finite element spatial discretization without mesh change are proven;
see also \cite{Chaumont23} for earlier developments on error estimators for spatially-discrete schemes.
For the Newmark scheme \emph{a posteriori} error bounds in the $L^\infty(H^1)$ norm are proven in~\cite{Gorynina-Lozinski-Picasso:2019}.
A local time-stepping strategy combined with the leap-frog method
is considered in~\cite{Diaz-Grote:2009};
the corresponding \emph{a posteriori} bounds are given in~\cite{Grote-Lakkis-Santos:2024}. Also, related to the developments above,
the \emph{a posteriori} error bounds in the $L^\infty(L^2)$ norm
for the semidiscrete time-stepping method in \cite{Walkington:2014}
is presented in~\cite{Dong-Mascotto-Wang:2024}.
Remarkably, with the exception of the early work \cite{Johnson:1993},
we are not aware of rigorous \emph{a posteriori} error analysis
for any other arbitrary order space-time Galerkin-type method.

This work is concerned with the proof of fully computable \emph{a posteriori} error bounds in the $L^\infty(L^2)$ norm for a variant of the method
in~\cite{Walkington:2014}, allowing, crucially,
for dynamic mesh modification at each time-step;
the results extend the respective time-discrete scheme \emph{a posteriori} bound from~\cite{Dong-Mascotto-Wang:2024} to the practical space-time setting with mesh modification.
The choice of the method is motivated by the desire to strike a ``cost-analysis'' balance:
this is an arbitrary order one-field discretisation,
resulting in fewer degrees of freedom than methods defined
upon rewriting the problem as a first order system,
and allowing for dynamic mesh modification
yet retaining the possibility of deriving
rigorous error control.
It is known already since the 1970s \cite{Baker:1976} that the $L^\infty(H^1)$ norm error
for standard Galerkin approximations of the wave equation is susceptible
to initial data approximation errors. Hence, dynamic mesh modification typically
creates a compounded effect, resulting in suboptimal convergence rates
for the $L^\infty(H^1)$ norm error \cite{Yang:1995,Karakashian-Makridakis:2005}.

The proof of the \emph{a posteriori} $L^\infty(L^2)$ norm error upper bound
for the method in~\cite{Walkington:2014} with dynamic mesh modification
requires a combination of known and new reconstructions.
More specifically, the mesh modification creates discontinuity across time nodes,
which is handled using averaging of a Hermite interpolation basis expansion
to ensure the preservation of the first temporal derivative
while simultaneously lifting the temporal discontinuity;
this non-standard reconstruction is instrumental
in keeping the error proliferation due to mesh modification under control.
The time derivative of the resulting lifted continuous-in-time field is further
reconstructed using the discontinuous Galerkin time-stepping reconstruction
of Makridakis and Nochetto \cite{Makridakis-Nochetto:2006},
combined with a non-standard elliptic reconstruction, motivated by the developments
in \cite{Georgoulis-Makridakis:2023} for parabolic problems.
Special care is taken to provide values (or bounds) for all constants involved
in the corresponding estimates.

Further, we validate the theoretical results by detailed numerical experiments;
we also investigate the energy dissipation of the method
for different approximation orders to find that dissipation
reduces while increasing the order in time of the method.
We assess the resulting \emph{a posteriori} error estimates
on test cases with smooth and singular solutions,
with and without mesh modification.
Moreover, we propose a practical space-time adaptive algorithm employing
the constituent computable error estimators of the new \emph{a posteriori}
$L^\infty(L^2)$ norm error bound;
a series of numerical experiments of dynamically adapted meshes showcases
the relevance of the proven \emph{a posteriori} bound in driving mesh modification.

The remainder of this work is structured as follows. In Section \ref{sec:model} we present the model problem and provide some function space notation.
The method is introduced in Section~\ref{sec:discrete_scheme}. The definition and the properties of the reconstructions required for the proof of the main result are discussed in Section~\ref{sec:operators}. The proof of the
 \emph{a posteriori} error estimate is given in Section~\ref{sec:apost-Linfty-L2}. Section~\ref{sec:numerical_aspects} contains the numerical experiments and the description of the space-time adaptive algorithm. Finally in Section \ref{sec:concl}, we draw some conclusions.

\section{Model problem}\label{sec:model}
Let~$\Dom$ be a polytopic Lipschitz domain in~$\Real^d$, $d=1$, $2$, $3$. Standard notation is used throughout for Sobolev and Bochner spaces:
the space of Lebesgue measurable and square integrable functions over~$\Dom$ is denoted by~$L^2(\Dom)$, while
the Sobolev space of positive integer order~$s$ by~$H^s(\Dom)$.
$H^1_0(\Dom)$ is the space of functions
in $H^1(\Dom)$ with zero trace
over the boundary of $\Dom$.
The space~$H^{-1}(\Dom)$
is the dual of~$H^1_0(\Dom)$
and the duality pairing between these two spaces
is $\langle \cdot, \cdot \rangle$.

Given~$\calX$ a real Banach space with norm~$\norm{\cdot}_{\calX}$,
an interval $I$, and~$k$ larger than or equal to~$1$,
we define $L^k(I; \calX)$ as the Bochner space of measurable functions~$w$ from $I$ to~$\calX$
such that the following norm is finite:
\[
\norm{w}_{L^k(I; \calX)} :=
	\left( \int_I \norm{w(t)}_{\calX}^k {\rm d}t \right)^{\frac1k} \, \quad \text{ for } 1\le k < \infty; \qquad \qquad
	\norm{w}_{L^\infty(I; \calX)}:= \text{ess\ sup}_{t \in I} \norm{w(t)}_{\calX}.
\]
For any positive integer~$s$ and any~$k\ge 1$
the space~$W^{s,k}(I;\calX)$ is the space of measurable functions~$w$
whose derivatives in time up to order~$s$ belong to $L^k(I;\calX)$.

Let $T$ be a positive final time and $J\eqq (0,T]$ the evolution time interval.
We use $\cdot'$ and $\cdot''$ to denote the first
and second order time derivative, respectively.
For spatial derivatives,
we drop the spatial dependence
for brevity, e.g., $\nabla \eqq \nabla_{\xcoord}$.
The spatial $L^2(\Dom)$ inner product is abbreviated to $(\cdot,\cdot)$,
the corresponding norm to $\norm{\cdot}$.
We omit the dependence in time and space for inner products,
duality pairings,
and norms when they are defined
on the whole spatial domain~$\Dom$
or the whole time interval~$J$, e.g.,
$\norm{\cdot}_{L^2(L^2)}\eqq \norm{\cdot}_{L^2(J;L^2(\Dom))}$.

Let $u_0\in H^1_0(\Dom)$,
$u_1\in L^2(\Dom)$, and
$f\in L^2(J;L^2(\Dom))$.
We henceforth assume that
the spatial domain~$\Dom$ is
\emph{convex}.
\revb{This assumption is used in Section~\ref{sec:apost-Linfty-L2} below
in order to derive an $hp$-optimal a posteriori error estimator
for the FEM in the $L^2$ norm.
For nonconvex domains, the powers of~$h$ and~$p$
in the spatial estimators~\eqref{eq:spatial-estimator}
will be slightly suboptimal
due to the reduced elliptic regularity}.
We consider the linear wave initial/boundary value problem:
find $u: J \times \Dom \to \Real$, such that
\begin{equation}\label{eq:model_strong}
	\begin{cases}
		u''-\Delta u = f \quad  & \text{in } J\times \Dom\\
		u = 0 \quad \quad       & \text{on } J\times \partial\Dom\\
		u(0) = u_0 \qquad u'(0) = u_1 \quad       & \text{in } \Dom,\\
	\end{cases}
\end{equation}
in weak form: find $u\in H^2(J;H^{-1}(\Dom))\cap L^2(J;H^1_0(\Dom))\cap H^1(J;L^2(\Dom))$,  such that
\begin{equation} \label{eq:weak-formulation}
	\begin{cases}\displaystyle
		\int_J \big( \langle u'', w \rangle +(\nabla u, \nabla w) \big)\,{\rm d}t =\int_J (f,w)\,{\rm d}t \qquad\text{for all } w\in L^2(J;H^1_0(\Dom))\\
		u(0)=u_0, \qquad u'(0)=u_1.
	\end{cases}
\end{equation}
Problem~\eqref{eq:weak-formulation} is well-posed \cite[Theorem 8.2-2]{Raviart-Thomas:1983}.
With minor modifications to the analysis below, variable coefficients,
other linear spatial differential operators,
and different boundary conditions are possible to be treated.

\section{A space-time finite element method} \label{sec:discrete_scheme}
For $N\in\mathbb N$,
we consider a grid $0=t_0<t_1<...<t_N=T$
of $J$ and introduce the time-step $\dt_n =: t_n-t_{n-1}$
for all $n\in\calN \eqq  \{1,2,...,N\}$.
With each time interval $I_n\eqq (t_{n-1},t_n]$,
we associate a local polynomial degree $q_n$;
we collect these
in the polynomial degree vector $\bq\eqq (q_1,q_2,...,q_N)$.
For $k\in\mathbb{Z}$, $\bq+k$ is the vector of entries $q_n+k$, $n\in\calN$.
For all $n\in\calN\cup\{0\}$, we consider
sequences of simplicial conforming meshes $\calT_h^n$
over the spatial domain~$\Dom$,
set $\calT_h^0 \eqq \calT_h^1$,
assume that neighbouring \revb{spatial elements at the same time step} have uniformly comparable size,
and (for $d>1$) the diameter of each element is uniformly comparable to that
of each of its facets.
For all cell~$K$ in $\calT_h^n$, we define the mesh-size $h_K$ as its diameter;
$h_n$ is the piecewise constant function
$h_n{|_K}(\xcoord) \eqq h_K$
for all $n\in\calN$,
$K\in \calT_h^n$,
and $\xcoord$ in the interior of~$K$.
The set of all interior facets of a mesh $\calT_h^n$
is $\calF_I^n$;
the set of the facets of a cell~$K$ is~$\calF_K$.
We assume that  \revr{two spatial meshes at consecutive time steps
are constructed by means of
a uniformly bounded finite number of mesh refinements
and coarsening steps.}
This assumption is used to avoid large errors produced
by the massive mesh-modification.
In Section~\ref{subsec:mesh_change} below,
the effect of mesh modification is illustrated numerically.

With each cell~$K$ and each facet~$F$,
we associate the outward normal vector~$\mathbf n_{K}$
and the (fixed once-and-for-all amongst the two possible ones)
normal vector~$\nF$\revb{, respectively}.
Given the broken Sobolev space
\[
H^1(\calT_h^n)
:= \{ w \in L^2(\Dom) \mid
    w|_K \in H^1(K) \quad
    \forall K \in \calT_h^n \},
\]
we define the spatial jump operator $[\![\cdot ]\!]$
as the identity on each boundary facet
and, on each $F$ in $\calF_I^n$ by
\[
[\![ w ]\!]
 \eqq
(w|_{K_1} \mathbf n_{K_1} \cdot \nF)|_F
    + (w|_{K_2} \mathbf n_{K_2} \cdot \nF)|_F
\qquad\quad \forall w \in H^1(\calT_h^n),
\quad F = K_1 \cap K_2.
\]
Moreover, for all~$n\in\calN\cup\{0\}$,
$V_h^n$ is an $H^1$ conforming finite element space
of uniform degree~$p$ over a mesh~$\calT_h^n$.
$\Pi_p^n$ is the orthogonal $L^2$ projector
from $L^2(\Dom)$ to $V_h^n$; its global counterpart
$\Pi_p: L^2(J;L^2(\Dom)) \to L^2(J;H^1_0(\Dom))$
is given by $\Pi_p |_{I_n}\eqq \Pi_p^n$.
In addition, we consider its orthogonal complement
projector $\Pi_p^{\bot}|_{I_n} \eqq \Pi_p^{n,\bot}\eqq I_d-\Pi_p^{n}$,
\revb{$I_d$ being the identity map.}
Then, we introduce the intersection and union spaces
\[
    V_h^{n,\ominus}\eqq V_h^n \cap V_h^{n-1}
    \qquad\text{and}\qquad V_h^{n,\oplus}\eqq V_h^n \cup V_h^{n-1},
    \qquad\qquad\qquad
     \forall n \in \calN,
\]
with associated meshes $\calT_h^{n,\ominus}$ and $\calT_h^{n,\oplus}$,
interior facet sets $\calF_I^{n,\ominus}$ and $\calF_I^{n,\oplus}$,
and mesh-size functions $h_n^{\ominus}$ and $h_n^{\oplus}$, respectively.
We also define coarsened (resp. refined) cells as
the cells that only belong to one between
$\calT_h^{n-1}$ and $\calT_h^{n}$,
and also belong to $\calT_h^{n,\ominus}$ (resp. $\calT_h^{n,\oplus}$),
the subdomain $D_c^n$ (resp. $D_r^n$) as the union of cells
sharing at least one vertex with the coarsened (resp. refined) cells
between $\calT_h^n$ and $\calT_h^{n-1}$.
A concrete example is given in Figure \ref{fig:intersect_space}
where two meshes from consecutive time-steps are illustrated.
\begin{figure}[ht]
    \centering
    \includegraphics[width=0.5\linewidth]{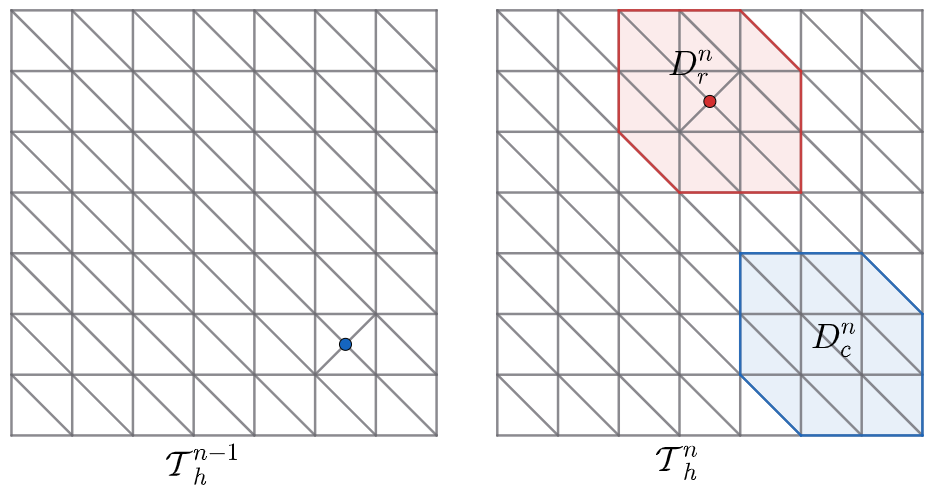}
    \caption{Illustration of two meshes from two consecutive time-steps.}
    \label{fig:intersect_space}
\end{figure}

For any Banach space $\calX$, we define the local
and global tensor-product spaces
\begin{equation*}
\begin{split}
\mathbb{P}_{q_n}(I_n;\calX)
    \eqq   \revb{\mathbb{P}_{q_n}(I_n) \otimes \calX},
\qquad\qquad\qquad
\mathbb{P}_{\bq}(J;\calX)
    \eqq \revb{\bigoplus_{n=1}^{N} \mathbb{P}_{q_n}(I_n;\calX).}
\end{split}
\end{equation*}
For any polynomial degree $q_n$,
given the $L^2$ projector
$\Pi_{q_n}: L^2(I_n;\calX) \to \mathbb{P}_{q_n}(I_n;\calX)$
in the time interval $I_n$,
we define the $L^2$ projector
in time $\Pi_{\bq}:L^2(J;\calX) \to \mathbb{P}_{\bq}(J;\calX)$
as {$\Pi_{\bq}|_{I_n}\eqq \Pi_{q_n}$, its orthogonal complement projector is given by $\Pi_{\bq}^{\bot}|_{I_n}\eqq \Pi_{q_n}^{\bot} \eqq I_d - \Pi_{q_n}$.
Given the local space-time
projector $\Pi_{p,q_n}\eqq \Pi_{p}^n\circ \Pi_{q_n}
= \Pi_{q_n} \circ \Pi_{p}^n
: L^2(I_n;L^2(\Dom))
\to \mathbb{P}_{q_n}(I_n;V_h^n)$
at each time-step,
we define the global space-time
projector $\Pi_{p,\bq}: L^2(J;L^2(\Dom)) \to \mathbb{P}_{\bq}(J;H^1_0(\Dom))$
as $\Pi_{p,\bq}|_{I_n} \eqq \Pi_{p,q_n}$.

Henceforth, we assume that $p\geq 1$ and
$q_n\geq 2$ for all $n\in\calN$.
Let $u_{0,h}$ and $u_{1,h}$
be approximations of the initial data~$u_0$
and~$u_1$ onto~$V_h^0$
with optimal approximation properties in the
$L^2$ norm;
for instance, in the numerical
experiments of Section~\ref{sec:numerical_aspects}
we pick the $L^2$ projection onto the finite element space over the initial mesh
and (if both initial data $u_0$ and $u_1$ in~\eqref{eq:model_strong} are defined pointwise)
the Lagrangian interpolant.

We define the space-time finite element space
\begin{equation*}
    V_{p,\bq}\eqq \{ W\in \mathbb{P}_{\bq}(J;H^1_0(\Dom)) \ |\ W|_{I_n}\in \mathbb{P}_{q_n}(I_n;V_h^n) \quad \forall n\in \calN  \}.
\end{equation*}
For~$w(t_0^-)\in H^1_0(\Dom)$, we define a time jump operator
$
    [w]_n \eqq
        w(t_n^+) - w(t_n^-)$ for $n=0,1,\dots,N-1$; for $n=N$, we set $ [w]_N \eqq 0$.
Note that $    [w]_n=0$ if $w$ continuous across $t_n$.
For $w=U$, we define $U(t_0^-)\eqq u_{0,h}$ and $U'(t_0^-)\eqq u_{1,h}$.

The space-time method for problem~\eqref{eq:weak-formulation} we shall be concerned with reads:
find $U\in V_{p,\bq}$, such that
\begin{equation}\label{eq:Walkington_scheme}
\begin{cases}\displaystyle
\int_{I_n} \{ (U'',W)+ (\nabla U, \nabla W)\}{\rm d}t
+ \big( [U']_{n-1},W(t_{n-1}^+) \big)= \int_{I_n}(f,W){\rm d}t
& \text{ for all } W\in \mathbb{P}_{q_n-1}(I_n;V_h^n) \\
 U(t_{n-1}^+)
 \eqq \calP_p^n U(t_{n-1}^-)
 \qquad\qquad\qquad  \forall n \in \calN
\end{cases}
\end{equation}
with $\calP_p^n : V_h^{n-1} \to V_h^n$ denoting the orthogonal $H^1$ projection operator.
In the case of static spatial meshes, the method~\eqref{eq:Walkington_scheme}
coincides to that in~\cite{Walkington:2014}, resulting in a continuous approximation in time.

\section{Reconstructions and Baker's test function} \label{sec:operators}
We introduce some reconstructions that will be used below in the proof
of the \emph{a posteriori} error bound. Also, we show some approximation results
for a non-standard test function to be used by, whose use in error analysis
of finite element methods for the wave equation originates in~\cite{Baker:1976}.

\subsection{A time reconstruction} \label{subsection:time-reconstruction}

Given~$V\in\mathbb{P}_{q_n}(I_n;L^2(\Dom))$, for all $n\in\calN$
with given $V'(t_0^-)\in L^2(\Dom)$,
we define $\RtV$ with $\RtV|_{I_n}\in \mathbb{P}_{q_n+1}(I_n;L^2(\Dom))$ by the relations
\begin{equation}\label{eq:RtV}
\begin{cases}\displaystyle
\int_{I_n} (\RtV'',W)\,{\rm d}t
= \int_{I_n} (V'',W)\,{\rm d}t + \big( [V']_{n-1}, W(t_{n-1}^+) \big)
    \quad\text{for all } W \in \mathbb{P}_{q_n-1}(I_n;L^2(\Dom))\\
\RtV(t_{n-1}^+)=V(t_{n-1}^+),
\quad \RtV'(t_{n-1}^+)=V'(t_{n-1}^-).
\end{cases}
\end{equation}
\revb{If $V\in V_{p,\bq}$, then $\RtV\in V_{p,\bq+1}$.}
The above definition is motivated by the respective construction
due to Makridakis and Nochetto~\cite{Makridakis-Nochetto:2006}
for discontinuous Galerkin time-stepping methods
for first order evolution problems;
see also \cite{Schoetzau-Wihler:2010} for an extension to the $hp$-version
and \cite{Dong-Mascotto-Wang:2024} for~\eqref{eq:RtV}
in the case of static meshes and semi-discrete in time setting.
\revr{Solving problem~\eqref{eq:RtV} is equivalent to inverting
a mass matrix in the time variable and prescribing the initial conditions;
we refer to~\cite{Makridakis-Nochetto:2006} for a detailed construction.}

The time reconstruction above satisfies certain smoothness properties.
In particular, \revb{\bf\emph{it maps $\mathcal C^0$ piecewise polynomials
into $\mathcal C^1$ piecewise polynomials in time.}}

\begin{lemma}[Smoothness of the time reconstruction]
\label{lemma:smoothness-RtU}
$\RtV$ satisfies
\begin{equation}\label{eq:right_continuity_dtRtU}
\RtV'(t_n^-)=V'(t_n^-) \in V_h^n,
\qquad\qquad\qquad
\RtV(t_n^-)=V(t_n^-) \in V_h^n
\qquad\qquad\qquad \forall n \in \calN,
\end{equation}
i.e., $\RtV'\in\calC^0(J;L^2(\Dom))$.
Further, if $V\in\calC^0(J;L^2(\Dom))$, \ie
\begin{equation} \label{eq:extra-assumption-continuity}
V(t_{n-1}^-)=V(t_{n-1}^+)
\qquad\qquad\qquad \forall n \in \calN \cup \{0\},
\end{equation}
then $\RtV\in \calC^1(J;L^2(\Dom))$
and it coincides with the reconstruction operator
in \cite[Sect. 3.2]{Dong-Mascotto-Wang:2024}.
\end{lemma}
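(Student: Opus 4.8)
The plan is to verify the claimed smoothness and endpoint identities directly from the defining relations~\eqref{eq:RtV}, exploiting the polynomial structure of $\RtV|_{I_n}\in\mathbb P_{q_n+1}(I_n;L^2(\Dom))$ together with the $q_n-1$ moment conditions and the two initial conditions at $t_{n-1}^+$. First I would establish the right-node identities in~\eqref{eq:right_continuity_dtRtU} on a single interval $I_n$. Fix $n$ and let $E\eqq \RtV-V$ on $I_n$; then $E$ is a polynomial in time of degree at most $q_n+1$ with values in $V_h^n$ (since both $\RtV(t_{n-1}^+)=V(t_{n-1}^+)\in V_h^n$ and $V\in\mathbb P_{q_n}(I_n;V_h^n)$ by the hypothesis that $V$ is the discrete field, hence $\RtV\in V_h^n$ pointwise; more precisely, $\RtV$ takes values in $V_h^n$ because its second derivative does, by the first line of~\eqref{eq:RtV}, and the two initial values lie in $V_h^n$). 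The first line of~\eqref{eq:RtV} says precisely that
\[
\int_{I_n}(E'',W)\,{\rm d}t = \big([V']_{n-1},W(t_{n-1}^+)\big) \qquad \text{for all } W\in\mathbb P_{q_n-1}(I_n;L^2(\Dom)),
\]
while the initial conditions give $E(t_{n-1}^+)=0$ and $E'(t_{n-1}^+)=V'(t_{n-1}^-)-V'(t_{n-1}^+)=-[V']_{n-1}$.

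The key computation is an integration by parts in time on the left-hand side: for $W\in\mathbb P_{q_n-1}(I_n;L^2(\Dom))$,
\[
\int_{I_n}(E'',W)\,{\rm d}t = \big(E'(t_n^-),W(t_n^-)\big) - \big(E'(t_{n-1}^+),W(t_{n-1}^+)\big) - \int_{I_n}(E',W')\,{\rm d}t.
\]
Substituting $E'(t_{n-1}^+)=-[V']_{n-1}$ and the moment identity, the jump terms cancel, leaving
\[
\big(E'(t_n^-),W(t_n^-)\big) = \int_{I_n}(E',W')\,{\rm d}t \qquad \text{for all } W\in\mathbb P_{q_n-1}(I_n;L^2(\Dom)).
\]
Now $E'\in\mathbb P_{q_n}(I_n;V_h^n)$, so it suffices to show $E'\equiv 0$. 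Choosing $W$ to be the antiderivative of a suitable polynomial — concretely, taking $W'$ to range over $\mathbb P_{q_n-2}(I_n;L^2(\Dom))$ (legitimate since such $W$ lies in $\mathbb P_{q_n-1}$) and also testing with the constant-in-time $W(t)\equiv\mathfrak v$ which forces $(E'(t_n^-),\mathfrak v)=0$ for all $\mathfrak v\in V_h^n$, hence $E'(t_n^-)=0$ — the remaining relation reads $\int_{I_n}(E',W')\,{\rm d}t=0$ for all $W'\in\mathbb P_{q_n-2}(I_n;L^2(\Dom))$. Together with $E'(t_n^-)=0$ and $E'(t_{n-1}^+)=-[V']_{n-1}$ this pins down $E'$: it is orthogonal to $\mathbb P_{q_n-2}$ and vanishes at $t_n^-$. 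One still needs one more scalar condition to conclude $E'\equiv0$; this comes from also varying $W$ within $\mathbb P_{q_n-1}$ more cleverly, or equivalently from the observation that $E$ itself is determined by $E(t_{n-1}^+)=0$ plus $q_n$ further conditions matching the degrees of freedom of $\mathbb P_{q_n+1}$ minus one. The cleanest route is: the map $V\mapsto\RtV$ is well-defined and linear (the defining system is square), and one checks $\RtV=V$ whenever $V$ is already $\mathcal C^1$ across $t_{n-1}$ with $[V']_{n-1}=0$ and $V|_{I_n}\in\mathbb P_{q_n}$ — but degree considerations ($q_n$ vs $q_n+1$) mean $\RtV$ need not equal $V$; rather the statement $\RtV'(t_n^-)=V'(t_n^-)$ is a genuine super-convergence-type identity at the endpoint. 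So I would instead argue: from $\big(E'(t_n^-),W(t_n^-)\big)=\int_{I_n}(E',W')\,{\rm d}t$, pick $W$ with $W(t_n^-)=0$, $W'$ arbitrary in $\mathbb P_{q_n-2}$, getting $E'\perp\mathbb P_{q_n-2}$; then pick $W\equiv\mathfrak v$ constant to get $E'(t_n^-)=0$. Since $E'\in\mathbb P_{q_n}$ and is $L^2(I_n)$-orthogonal to $\mathbb P_{q_n-2}$, it is a linear combination of the top two Legendre polynomials $L_{q_n-1},L_{q_n}$ on $I_n$; imposing $E'(t_n^-)=0$ gives one linear relation, and $E'(t_{n-1}^+)=-[V']_{n-1}$ the other, determining $E'$ — and in general $E'\not\equiv0$. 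Hence the correct reading is that $\RtV'(t_n^-)=V'(t_n^-)$ requires additionally that $L_{q_n-1}(t_n^-)$ and $L_{q_n}(t_n^-)$ relate so that this combination vanishes at $t_n^-$; since Legendre polynomials satisfy $L_k(1)=1$ for all $k$, the relation $E'(t_n^-)=0$ combined with $E'$ lying in $\mathrm{span}\{L_{q_n-1},L_{q_n}\}$ and $L_{q_n-1}(1)=L_{q_n}(1)=1$ forces the two coefficients to be negatives of each other, after which $E'(t_{n-1}^+)$: using $L_k(-1)=(-1)^k$, the two have opposite parity, so $E'(t_{n-1}^+)=c(L_{q_n-1}(-1)-L_{q_n}(-1))=\pm2c$, which equals $-[V']_{n-1}$; this determines $c$, and indeed $E'\not\equiv0$ unless $[V']_{n-1}=0$. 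Therefore $\RtV'(t_n^-)=E'(t_n^-)+V'(t_n^-)=V'(t_n^-)$ holds — the cancellation at the right endpoint is exactly the $L_k(1)=1$ fact. This same $c$ then gives $E(t_n^-)=\int_{I_n}E'=c\int_{I_n}(L_{q_n-1}-L_{q_n})=0$ by orthogonality of Legendre polynomials to constants (for $q_n-1,q_n\ge1$), yielding $\RtV(t_n^-)=V(t_n^-)$. Finally, membership in $V_h^n$ follows since $E'\in\mathbb P_{q_n}(I_n;V_h^n)$ and $V'(t_n^-)\in V_h^n$; and $\RtV'\in\mathcal C^0(J;L^2(\Dom))$ follows because $\RtV'(t_n^-)=V'(t_n^-)=\RtV'(t_n^+)$ by the first endpoint condition in~\eqref{eq:RtV} applied on $I_{n+1}$ (which sets $\RtV'(t_n^+)=V'(t_n^-)$).

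For the second assertion: if $V\in\mathcal C^0(J;L^2(\Dom))$, i.e.~\eqref{eq:extra-assumption-continuity} holds, then $\RtV(t_n^-)=V(t_n^-)=V(t_n^+)=\RtV(t_n^+)$ (the last equality again from the initial condition in~\eqref{eq:RtV} on $I_{n+1}$), so $\RtV\in\mathcal C^0(J;L^2(\Dom))$; combined with $\RtV'\in\mathcal C^0$ from the first part, $\RtV\in\mathcal C^1(J;L^2(\Dom))$. Moreover, under~\eqref{eq:extra-assumption-continuity} the jump terms $[V']_{n-1}$ need not vanish (only $[V]_{n-1}=0$), but when the meshes are static $V'$ is also continuous, $[V']_{n-1}=0$, and the defining relations reduce verbatim to those of~\cite[Sect.~3.2]{Dong-Mascotto-Wang:2024}, giving the coincidence of the two operators. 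The main obstacle in writing this cleanly is the endpoint super-convergence identity $\RtV'(t_n^-)=V'(t_n^-)$: it is not a soft consequence of the moment conditions alone but hinges on the explicit boundary values $L_k(\pm1)=(\pm1)^k$ of Legendre polynomials, so I would present that computation carefully (working on the reference interval $(-1,1)$), while the $\mathcal C^0$/$\mathcal C^1$ bookkeeping and the $V_h^n$-membership are then routine.
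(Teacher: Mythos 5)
Your argument reaches the two identities in \eqref{eq:right_continuity_dtRtU} by a route that agrees with the paper's for the first one and differs for the second. For $\RtV'(t_n^-)=V'(t_n^-)$ you, like the paper, integrate by parts once in time and test with a function that is constant in $t$; note, though, that your closing remark that ``the cancellation at the right endpoint is exactly the $L_k(1)=1$ fact'' contradicts your own derivation --- $E'(t_n^-)=0$ follows directly from the constant test function, before any Legendre structure is invoked. For $\RtV(t_n^-)=V(t_n^-)$ the paper integrates by parts a second time and tests with $W=\big(\RtV(t_n^-)-V(t_n^-)\big)(t-t_{n-1})$, whereas you characterise $E'=(\RtV-V)'$ explicitly as a multiple of $L_{q_n-1}-L_{q_n}$ (from $E'\perp\mathbb{P}_{q_n-2}$, $E'(t_n^-)=0$, $E'(t_{n-1}^+)=-[V']_{n-1}$) and then integrate, using that Legendre polynomials of degree at least one have zero mean; this needs $q_n\ge 2$, which holds by assumption. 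Your route is correct and buys an explicit formula for $\RtV-V$ on $I_n$ (essentially the structure behind the bounds of Lemma~\ref{lemma:approx-RtU}), at the price of a longer detour --- including a false start about ``needing one more scalar condition'' that you should excise; the paper's choice of test function is shorter and avoids the reference-interval bookkeeping.

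Two side assertions are wrong, although neither damages the main identities. First, $\RtV$ does \emph{not} take values in $V_h^n$ in general: by the first line of \eqref{eq:RtV}, $\RtV''$ is built from $[V']_{n-1}=V'(t_{n-1}^+)-V'(t_{n-1}^-)$, and $V'(t_{n-1}^-)\in V_h^{n-1}\not\subset V_h^n$ when the mesh changes. The membership claims in \eqref{eq:right_continuity_dtRtU} concern $V'(t_n^-)$ and $V(t_n^-)$, which lie in $V_h^n$ trivially once the identities are proved, so nothing is lost --- but your justification via ``$E'\in\mathbb{P}_{q_n}(I_n;V_h^n)$'' should be dropped. Second, for the coincidence with the operator of \cite[Sect.~3.2]{Dong-Mascotto-Wang:2024} you argue that static meshes force $[V']_{n-1}=0$; this is false: the scheme never imposes continuity of $U'$ across time nodes, and the jump $[U']_{n-1}$ is generically nonzero even on a fixed mesh (it is precisely the upwind term in \eqref{eq:Walkington_scheme}). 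The coincidence holds simply because, under \eqref{eq:extra-assumption-continuity}, the defining relations \eqref{eq:RtV} are literally those of the cited reconstruction, jump term included.
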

\begin{proof}
The proof is an extension of that of
\cite[Proposition~3.2]{Dong-Mascotto-Wang:2024},
taking now into account mesh modification.

We begin by selecting $W = c_p \equiv c_p(\xcoord)\in L^2(\Dom)$ a constant function
with respect to $t$ in~\eqref{eq:RtV}.
Integrating by parts in time
the first line in~\eqref{eq:RtV} on both sides,
and imposing the initial condition on the time derivative
in~\eqref{eq:RtV}, we deduce
$(\RtV'(t_n^-)-V'(t_n^-),c_p) = 0
$.
Choosing $c_p = \RtV'(t_n^-)-V'(t_n^-)$,
we infer the first identity in~\eqref{eq:right_continuity_dtRtU}.

As for the second identity in~\eqref{eq:right_continuity_dtRtU},
integrating by parts in time twice
both sides of the first line of~\eqref{eq:RtV}
and invoking the first identity in~\eqref{eq:right_continuity_dtRtU}, we find
\begin{equation*}
\int_{I_n} (\RtV-V,W''){\rm d}t - (\RtV(t_n^-)-V(t_n^-),W'(t_n^-))
+ (\RtV(t_{n-1}^+)-V(t_{n-1}^+),W'(t_{n-1}^+))=0.
\end{equation*}
We infer the second identity in~\eqref{eq:right_continuity_dtRtU}
by picking $W=\big( \RtV(t_n^-)-V(t_n^-) \big)(t-t_{n-1})$
and using $\RtV(t_{n-1}^+) = V(t_{n-1}^+)$.
Finally, the continuity of $V$ in \eqref{eq:extra-assumption-continuity} implies that $\RtV$ is globally continuous.
\end{proof}

We now estimate the error between $V$ and $\RtV$
by the norm of the jump of the time derivative of~$V$.
\begin{lemma}[Orthogonality and bounds for the time reconstruction]
\label{lemma:approx-RtU}
\revb{For all $V\in V_{p,\bq}$ and $n\in \calN$, we have}
\begin{equation}\label{eq:ortho_RtU}
	\int_{I_n}(V-\RtV,W''){\rm d}t = 0
	\qquad\qquad\qquad \forall W \in \mathbb{P}_{q_n-1}(I_n;L^2(\Dom)).
\end{equation}
Moreover, we have
\begin{subequations}\label{eq:bound_RtU}
    \begin{align}
        \norm{V'-\RtV'}_{L^2(I_n;L^2)}^2 &= \dt_n c_1(q_n)^2 \norm{[V']_{n-1}}^2 \label{eq:bound_RtU-a}\\
        \norm{V'-\RtV'}_{L^\infty(I_n;L^2)}^2 &= \norm{[V']_{n-1}}^2 \label{eq:bound_RtU-b}\\
        \norm{V-\RtV}_{L^2(I_n;L^2)}^2 &\leq \dt_n^3 c_2(q_n)^2 \norm{[V']_{n-1}}^2 \label{eq:bound_RtU-c}\\
        \norm{V''-\RtV''}_{L^2(I_n;L^2)}^2 &\leq \dt_n^{-1} q_n^2 \norm{[V']_{n-1}}^2 \label{eq:bound_RtU-d},
    \end{align}
\end{subequations}
where
\begin{equation}\label{c1_c2}
        c_1(q)\eqq \Big(\frac{q}{(2q-1)(2q+1)}\Big)^{\frac{1}{2}} ,
        \qquad\qquad c_2(q)\eqq \Big(
        \frac{q}{4(q-2)(q-1)(2q-1)(2q+1)}\Big)^{\frac{1}{2}},
\end{equation}
for $q\ge 3$, while   $c_1(2)=\sqrt{2/15}$ and $c_2(2)=\sqrt{1/15}$.
\end{lemma}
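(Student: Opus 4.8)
The plan is to work on a fixed interval $I_n$, exploit the local structure of the reconstruction $\RtV$, and reduce everything to one scalar computation via a convenient basis. First I would establish the orthogonality relation \eqref{eq:ortho_RtU}: since $\RtV|_{I_n}\in \mathbb{P}_{q_n+1}(I_n;L^2)$ and $V|_{I_n}\in\mathbb{P}_{q_n}(I_n;L^2)$, integrating by parts twice in time the defining relation \eqref{eq:RtV}, using the endpoint conditions $\RtV(t_{n-1}^+)=V(t_{n-1}^+)$, $\RtV'(t_{n-1}^+)=V'(t_{n-1}^-)$, and the already-proven identities \eqref{eq:right_continuity_dtRtU} from Lemma \ref{lemma:smoothness-RtU} (namely $\RtV(t_n^-)=V(t_n^-)$, $\RtV'(t_n^-)=V'(t_n^-)$), all boundary terms cancel and the bilinear form $\int_{I_n}(\RtV-V,W'')\,{\rm d}t$ vanishes for $W\in\mathbb{P}_{q_n-1}(I_n;L^2)$. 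This is essentially a rearrangement of \eqref{eq:RtV}.

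Next, for the quantitative bounds, I would observe that the difference $\varphi \eqq \RtV' - V'$ is a polynomial of degree $q_n$ in time (with values in $L^2(\Dom)$) that, by \eqref{eq:right_continuity_dtRtU} and the initial condition in \eqref{eq:RtV}, satisfies $\varphi(t_{n-1}^+) = -[V']_{n-1}$ and $\varphi(t_n^-)=0$, together with an orthogonality to $\mathbb{P}_{q_n-1}(I_n;L^2)$ (differentiate \eqref{eq:ortho_RtU} appropriately, or integrate \eqref{eq:RtV} by parts once). These three facts pin down $\varphi$ completely: up to the spatial factor $[V']_{n-1}$, it is a fixed scalar polynomial — essentially the shifted/scaled Legendre polynomial of degree $q_n$ on $I_n$ normalised to equal $-1$ at the left endpoint and $0$ at the right endpoint (i.e. related to $P_{q_n}$ via the Rodrigues/Jacobi structure). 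Then \eqref{eq:bound_RtU-a} and \eqref{eq:bound_RtU-b} follow from computing $\|\cdot\|_{L^2(I_n)}$ and $\|\cdot\|_{L^\infty(I_n)}$ of that one scalar polynomial after rescaling to a reference interval, which produces the explicit constants $c_1(q_n)$ and the value $1$; the Legendre $L^2$-norm formula $\int_{-1}^1 P_q^2 = 2/(2q+1)$ and the normalisation factor give the $q/((2q-1)(2q+1))$ shape. For \eqref{eq:bound_RtU-c} I would integrate $\varphi$ in time (using $\varphi(t_n^-)=0$ to fix the constant) to get $\RtV - V$, then bound its $L^2(I_n)$-norm by $\dt_n$ times an $L^2$-norm of $\varphi$, or more sharply by another scalar polynomial computation, extracting the $\dt_n^3$ scaling and the constant $c_2(q_n)$ (the $(q-2)(q-1)$ in the denominator and the exceptional values at $q=2$ suggest that one gains two orders by antidifferentiating and that the generic formula degenerates at $q=2$, hence the separate statement). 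For \eqref{eq:bound_RtU-d} I would differentiate $\varphi$ once, use an inverse inequality in time on $\mathbb{P}_{q_n}(I_n;L^2)$ — which costs a factor $\dt_n^{-1}$ times a constant of size $q_n^2$ — applied to \eqref{eq:bound_RtU-b}, or equivalently a direct Markov-brothers-type bound on the scalar polynomial; this yields $\|\varphi'\|_{L^2(I_n;L^2)}^2 \le \dt_n^{-1}q_n^2\|[V']_{n-1}\|^2$.

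The main obstacle I anticipate is not the strategy but pinning down the \emph{exact} constants with the stated sharpness: \eqref{eq:bound_RtU-a} and \eqref{eq:bound_RtU-b} are equalities, so one must identify $\varphi/\|[V']_{n-1}\|$ as precisely the right Legendre-type polynomial and evaluate its norms without slack — this requires care with the three interpolation/orthogonality constraints and with the normalisation, and it is where the $q=2$ degeneracy (the constraints become dependent, forcing the special values $c_1(2)=\sqrt{2/15}$, $c_2(2)=\sqrt{1/15}$) must be handled separately. The inequality bounds \eqref{eq:bound_RtU-c} and \eqref{eq:bound_RtU-d} are more forgiving, but getting the clean constant $q_n^2$ in \eqref{eq:bound_RtU-d} and the explicit $c_2(q_n)$ in \eqref{eq:bound_RtU-c} still demands either an explicit scalar computation or a sharp form of the temporal inverse inequality; I would keep the spatial variable inert throughout (all estimates factor through $\|[V']_{n-1}\|$) so that only one-dimensional-in-time polynomial inequalities remain.
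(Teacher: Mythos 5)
Your strategy is viable and genuinely different from the paper's in several places, so let me compare. For \eqref{eq:ortho_RtU} you do exactly what the paper does (two integrations by parts plus the endpoint identities). For \eqref{eq:bound_RtU-a}--\eqref{eq:bound_RtU-b} the paper simply cites \cite[Lemma 3.3]{Dong-Mascotto-Wang:2024}, whereas you rederive them by identifying $\varphi:=\RtV'-V'$ explicitly; this works, but note an off-by-one: integrating \eqref{eq:RtV} by parts once gives $\int_{I_n}(\varphi,W')\,{\rm d}t=0$ for $W\in\mathbb{P}_{q_n-1}$, i.e.\ $\varphi\perp\mathbb{P}_{q_n-2}$, \emph{not} $\perp\mathbb{P}_{q_n-1}$ as you wrote (the latter, combined with your two endpoint conditions, would over-determine a degree-$q_n$ polynomial and force $\varphi=0$). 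The correct identification is $\varphi=c\,(L_{q_n}-L_{q_n-1})$ in scaled Legendre polynomials — a two-term combination, not a single normalised Legendre polynomial, which cannot vanish at an endpoint — and with that fix your norm computations do reproduce $c_1(q_n)$ (which, incidentally, is given by the same formula at $q=2$; only $c_2$ degenerates there, and for a different reason than "dependent constraints").

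For \eqref{eq:bound_RtU-c} and \eqref{eq:bound_RtU-d} you offer two routes each, and only one of each pair closes the stated constants. A plain Poincar\'e bound $\norm{V-\RtV}_{L^2(I_n;L^2)}\lesssim\dt_n\norm{\varphi}_{L^2(I_n;L^2)}$ loses the factor $(2\sqrt{(q_n-2)(q_n-1)})^{-1}$ hidden in $c_2$; the paper recovers it by testing \eqref{eq:ortho_RtU} with a double antiderivative of $\Pi_{q_n-3}(V-\RtV)$ and invoking the $hp$-projection estimate of Schwab (with a separate Poincar\'e argument only for $q_n=2$). Likewise, a generic $L^2$ inverse inequality applied to \eqref{eq:bound_RtU-a} or \eqref{eq:bound_RtU-b} yields $O(q_n^3)$ or $O(q_n^4)$ in place of the stated $q_n^2$ in \eqref{eq:bound_RtU-d}; the paper instead uses the Sch\"otzau--Wihler lifting operator, whose $L^2$-norm identity $\norm{L_n(V')}_{L^2(I_n;L^2)}^2=q_n^2\dt_n^{-1}\norm{[V']_{n-1}}^2$ gives the constant at once. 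Your fallback — the explicit scalar computation on $c\,(L_{q_n}-L_{q_n-1})$ — does in fact deliver both bounds sharply (for \eqref{eq:bound_RtU-d} one even gets equality, since $L_{q_n}'\perp L_{q_n-1}'$ and $\norm{L_q'}_{L^2(-1,1)}^2=q(q+1)$), so your plan succeeds provided you commit to that route rather than the softer inequalities.
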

\begin{proof}
The proof of \eqref{eq:bound_RtU-a} and \eqref{eq:bound_RtU-b}
is given in \cite[Lemma 3.3]{Dong-Mascotto-Wang:2024};
the corresponding bounds can also be found in \cite{Holm-Wihler:2018, Schoetzau-Wihler:2010}.
To show \eqref{eq:ortho_RtU},
we integrate by parts twice in time the first line in~\eqref{eq:RtV},
and invoke~\eqref{eq:right_continuity_dtRtU}
and the second line in~\eqref{eq:RtV}, to find
\begin{equation*}
\int_{I_n} (V-\RtV,W'')\,{\rm d}t = (V(t_n^-)-\RtV(t_n^-),W'(t_n^-)) = 0
\qquad\qquad\qquad
        \forall W \in \mathbb{P}_{q_n-1}(I_n;L^2(\Dom)).
\end{equation*}
We split the proof of \eqref{eq:bound_RtU-c} into two cases:
$q_n\geq 3$ and $q_n=2$.
For the case $q_n \geq 3$,  we pick
\begin{equation*}
    W(t) \eqq \int_{t_{n}}^t\int_{t_{n}}^s \Pi_{\bq-3} (V-\RtV)(r){\rm d}r{\rm d}s,
\end{equation*}
in \eqref{eq:ortho_RtU}, to deduce $
    \int_{I_n} \big( V-\RtV, \Pi_{\bq-3} (V-\RtV) \big){\rm d}t =0$.
Now, \cite[Theorem~3.11]{Schwab:1998} implies
\begin{equation*}
    \norm{(I_d-\Pi_{\bq-3})(V-\RtV)}_{L^2(I_n;L^2)} \leq \frac{\dt_n}{2\sqrt{(q_n-2)(q_n-1)}}\norm{V'-\RtV'}_{L^2(I_n;L^2)}.
\end{equation*}
Combining the three displays above gives
\begin{align*}
\norm{V-\RtV}_{L^2(I_n;L^2)}^2
&   =\int_{I_n} \Big( V-\RtV,(I_d-\Pi_{q_n-3})(V-\RtV) \Big){\rm d}t\\
&   \leq \frac{\dt_n}{2\sqrt{(q_n-2)(q_n-1)}}
    \norm{V-\RtV}_{L^2(I_n;L^2)}\norm{(V-\RtV)'}_{L^2(I_n;L^2)},
\end{align*}
which, together with~\eqref{eq:bound_RtU-a} and~\eqref{eq:bound_RtU-b},
implies the third bound in \eqref{eq:bound_RtU}
for the case $q_n\geq 3$.

For $q_n=2$, we have the Poincar\'e inequality
\[
    \int_0^1 |w(t)|^2 {\rm d}t
    = \int_0^1 \Big( \int_0^t w'(s){\rm d}s \Big)^2 {\rm d}t
    \leq \int_0^1 t\ {\rm d}t \int_0^1 |w'(s)|^2{\rm d}s \leq \frac{1}{2} \int_0^1 |w'(s)|^2{\rm d}s,
\]
for $w\in H^1(0,1)$, with $w(0)=0$.
\revb{Using Fubini's theorem, we have
\begin{equation*}
    \int_0^1 \int_\Omega |w|^2 =  \int_\Omega \int_0^1 |w|^2 \leq \frac{1}{2} \int_\Omega \int_0^1 |\partial_t w|^2 = \frac{1}{2}\int_0^1\int_\Omega  |\partial_t w|^2
    \qquad\qquad
    \forall w\in H^1(0,1;L^2(\Omega)),
\end{equation*}
}
which, upon scaling, implies
$\norm{V-\RtV}_{L^2(I_n;L^2)}
\leq 2^{-\frac{1}{2}}\dt_n\norm{(V-\RtV)'}_{L^2(I_n;L^2)}$.

In order to obtain~\eqref{eq:bound_RtU-d},
we introduce the lifting operator
$L_n: V_{p,\bq-1} \to \mathbb{P}_{q_n-1}(I_n;L^2(\Dom))$
as in~\cite[Section~$4.1$]{Schoetzau-Wihler:2010} for all $n\in\calN$:
\begin{equation*}
\int_{I_n} (L_n(Z),W){\rm d}t = ( [Z]_{n-1},W )
\qquad\qquad\qquad \forall Z \in V_{p,\bq-1}, w \in \mathbb{P}_{q_n-1}(I_n;L^2(\Dom)).
\end{equation*}
It readily follows from \cite[Proposition 2]{Schoetzau-Wihler:2010} that
$
    \norm{L_n(V')}_{L^2(I_n;L^2)}^2 = q_n^2\dt_n^{-1} \norm{[V']_{n-1}}^2$.
Invoking the above lifting operator and testing with $V''-\RtV''$
in~\eqref{eq:RtV}, we find
\begin{align*}
\norm{(V-\RtV)''}_{L^2(I_n;L^2)}^2
&= \int_{I_n} (L_n(V'),(V-\RtV)'')\,{\rm d}t
    \leq \norm{L_n(V')}_{L^2(I_n;L^2)}\norm{(V-\RtV)''}_{L^2(I_n;L^2)} \\
&= q_n \dt_n^{-\frac{1}{2}} \norm{[V']_{n-1}}\norm{(V-\RtV)''}_{L^2(I_n;L^2)}.
\end{align*}
\end{proof}
We next show computable bounds of the spatial approximation error of the time reconstruction.
\begin{lemma}[Spatial error of the time reconstruction]\label{lemma:bound_mesh_change_dtRtU}
Let $V \in V_{p,\bq}$ with $V(t_0^-)\in V_h^0$, and consider any  non-negative, piecewise constant  on each space-time element \revr{$K\times I_n$}, $n \in \calN$, $K\in\calT_h^n$,  function ${\rho_0}$. Then, for
$\ell \in \{2,3\}$, we have
\begin{equation}\label{eq:bound_mesh_change_dtRtU}
    \norm{{\rho_0}(\revb{\Pi_p^{n,\bot} \RtV})^{(\ell)}}_{L^\infty(I_n;L^2)}
    \leq c_3(q_n;\ell) \dt_n^{1-\ell}
    \norm{{\rho_0}\Pi_p^{n,\bot}V'(t_{n-1}^-)},
\end{equation}
where
$
c_3(q;\ell) \eqq
4\sqrt{2}q(q+1)  \left(2\sqrt{3}(q+1)^2\right)^{\ell-2}$.
\end{lemma}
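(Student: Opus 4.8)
The plan is to reduce the bound on $\RtV-\Pi_p^n\RtV$ to the already-available $L^2(I_n;L^2)$-type control on $V-\RtV$ (Lemma~\ref{lemma:approx-RtU}), using three ingredients: (i) a decomposition of the spatial error $\RtV-\Pi_p^n\RtV$ in terms of quantities we can already estimate; (ii) inverse estimates in time to pass from $L^2(I_n)$ to $L^\infty(I_n)$ and to absorb derivatives in $t$, which is where the powers $\dt_n^{1-\ell}$ and the polynomial factors in $q_n$ will come from; and (iii) the smoothness identities of Lemma~\ref{lemma:smoothness-RtU}, which tell us that $\RtV$ and $\RtV'$ match $V$ and $V'$ at the endpoints. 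First I would write $\RtV-\Pi_p^n\RtV = \Pi_p^{n,\bot}\RtV$ and observe that, since $V|_{I_n}$ already lies in $\mathbb{P}_{q_n}(I_n;V_h^n)$, we have $\Pi_p^{n,\bot}V = 0$ on the \emph{open} interval; hence $\Pi_p^{n,\bot}\RtV = \Pi_p^{n,\bot}(\RtV-V)$ on $I_n$ except through the initial-value data. The only place the orthogonal complement of $V$ survives is at the left endpoint $t_{n-1}^-$, through the initial condition $\RtV'(t_{n-1}^+) = V'(t_{n-1}^-)$ in \eqref{eq:RtV} — and $V'(t_{n-1}^-)$ is the trace from $I_{n-1}$, which need not lie in $V_h^n$. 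This is exactly why the right-hand side of \eqref{eq:bound_mesh_change_dtRtU} features $\Pi_p^{n,\bot}V'(t_{n-1}^-)$.

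Concretely, the key observation is that $W \eqq \Pi_p^{n,\bot}\RtV$ solves, on $I_n$, a problem with \emph{zero} right-hand side data in the interior: applying $\Pi_p^{n,\bot}$ to \eqref{eq:RtV} and using $\Pi_p^{n,\bot}V = 0$ on the open interval kills the $(V'',\cdot)$ term and the jump term (since $[V']_{n-1}$ involves $V'(t_{n-1}^+)\in V_h^n$ and $V'(t_{n-1}^-)$, so $\Pi_p^{n,\bot}[V']_{n-1} = -\Pi_p^{n,\bot}V'(t_{n-1}^-)$), leaving $\int_{I_n}(W'',\cdot) = -(\Pi_p^{n,\bot}V'(t_{n-1}^-), \cdot(t_{n-1}^+))$ against test functions in $\mathbb{P}_{q_n-1}(I_n;V_h^n)$, together with $W(t_{n-1}^+) = \Pi_p^{n,\bot}V(t_{n-1}^+) = 0$ (as $V(t_{n-1}^+)\in V_h^n$) and $W'(t_{n-1}^+) = \Pi_p^{n,\bot}V'(t_{n-1}^-)$. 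So $W$ is, up to the single vector $g \eqq \Pi_p^{n,\bot}V'(t_{n-1}^-)$, a fixed polynomial-in-time profile: $W(t) = g\,\psi_{q_n}(t)$ where $\psi_{q_n}\in\mathbb{P}_{q_n+1}(I_n)$ is the scalar reconstruction of the pure-jump datum on $I_n$. Since $\rho_0$ is constant on each $I_n\times K$, $\norm{\rho_0 W^{(\ell)}}_{L^\infty(I_n;L^2)} = \norm{\rho_0 g}\cdot \|\psi_{q_n}^{(\ell)}\|_{L^\infty(I_n)}$, and everything reduces to bounding $\|\psi_{q_n}^{(\ell)}\|_{L^\infty(I_n)}$ for $\ell\in\{2,3\}$.

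The remaining work is the scalar, one-dimensional estimate: bound $\|\psi_{q}^{(\ell)}\|_{L^\infty(0,\dt)}$, where $\psi_q$ is the degree-$(q+1)$ polynomial determined by $\psi_q(0)=0$, $\psi_q'(0)=1$, and $\int_0^{\dt}\psi_q''\,v = 0$ for all $v\in\mathbb{P}_{q-1}(0,\dt)$ — i.e. $\psi_q'' \perp \mathbb{P}_{q-1}$, so $\psi_q''$ is a scalar multiple of the degree-$q$ Legendre polynomial on $(0,\dt)$, with the multiple fixed by $\psi_q'(0)=1$. I would compute/bound $\psi_q''$ and its derivatives via standard Legendre-polynomial estimates: $\|L_q\|_{L^\infty} = 1$ and the Markov-type bound $\|L_q'\|_{L^\infty} \le q(q+1)/2$ on a reference interval (or the sharper $\|P_q^{(k)}\|_{L^\infty(-1,1)}$ bounds), then rescale to $(0,\dt_n)$, picking up $\dt_n^{-1}$ per derivative. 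Tracking the scaling gives $\|\psi_{q_n}^{(2)}\|_{L^\infty(I_n)}\lesssim q_n(q_n+1)\dt_n^{-1}$ and $\|\psi_{q_n}^{(3)}\|_{L^\infty(I_n)}\lesssim q_n(q_n+1)^3\dt_n^{-2}$, matching the stated $c_3(q_n;\ell)\dt_n^{1-\ell}$ up to the explicit constants; the factor $4\sqrt2$ and the $(2\sqrt3(q+1)^2)^{\ell-2}$ come from carefully chasing the Legendre normalisation constants and the inverse/Markov inequalities on the reference interval. The main obstacle is precisely this bookkeeping: identifying the correct normalisation of $\psi_q''$ (the constant relating $\psi_q'(0)=1$ to the leading coefficient of $L_q$) and then getting \emph{explicit}, not merely asymptotic, constants in the Markov-type derivative bounds, since the paper insists on fully computable constants. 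I would lean on the sharp Schmidt/Markov inequalities for $L^\infty$ norms of derivatives of bounded polynomials on an interval to produce the stated $c_3$.
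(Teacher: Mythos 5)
Your structural reduction is sound and coincides with the paper's starting point: writing $\RtV-\Pi_p^n\RtV=\Pi_p^{n,\bot}(\RtV-V)$ and observing that, after applying $\Pi_p^{n,\bot}$ to \eqref{eq:RtV}, the only surviving datum is $\Pi_p^{n,\bot}[V']_{n-1}=-\Pi_p^{n,\bot}V'(t_{n-1}^-)$ is exactly how the right-hand side of \eqref{eq:bound_mesh_change_dtRtU} arises. From there, however, the paper does not pass through an explicit scalar profile: it applies the $L^\infty$-to-$L^2$ inverse inequality in time (contributing $4\sqrt2(q_n+1)\dt_n^{-1/2}$), for $\ell=3$ additionally the $H^1$-to-$L^2$ inverse inequality (contributing $2\sqrt3(q_n+1)^2\dt_n^{-1}$), and then reuses verbatim the lifting-operator argument of \eqref{eq:bound_RtU-d} — whose constant $q_n\dt_n^{-1/2}$ is already known from \cite[Proposition 2]{Schoetzau-Wihler:2010} — applied to the $\rho_0$-weighted, $\Pi_p^{n,\bot}$-projected quantity. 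This is where every factor of $c_3(q;\ell)$ comes from, with no Legendre computation needed.

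Your alternative final step contains a concrete error and a gap. You assert that the scalar profile satisfies $\int_0^{\dt}\psi_q''v=0$ for all $v\in\mathbb{P}_{q-1}(0,\dt)$ and hence that $\psi_q''$ is a multiple of the degree-$q$ Legendre polynomial; but $\psi_q''\in\mathbb{P}_{q-1}$, so orthogonality to $\mathbb{P}_{q-1}$ with zero right-hand side would force $\psi_q''\equiv 0$ (and a degree-$(q-1)$ polynomial cannot be a nonzero multiple of the degree-$q$ Legendre polynomial). The correct relation — which you yourself derive one paragraph earlier — is $\int_{I_n}\psi_q''v\,{\rm d}t=-v(t_{n-1}^+)$ for all $v\in\mathbb{P}_{q-1}(I_n)$, i.e.\ $\psi_q''$ is (minus) the $L^2(I_n)$-reproducing kernel of $\mathbb{P}_{q-1}(I_n)$ at the left endpoint, a nontrivial combination of \emph{all} Legendre polynomials up to degree $q-1$. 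With that correction the scalar-profile route is viable (and could even yield sharper constants, e.g.\ $\sup_{I_n}|\psi_q''|\le q^2\dt_n^{-1}$ directly from the kernel expansion), but the explicit bounds on $\|\psi_q^{(\ell)}\|_{L^\infty(I_n)}$ for $\ell=2,3$ — which are the entire content of the lemma, since the statement is about computable constants — are not carried out, and there is no verification that the constants so obtained are dominated by the stated $c_3(q;\ell)$. As written, the proof is therefore incomplete precisely at the step the lemma exists to settle.
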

\begin{proof}
Since $V=\Pi_p V$, we find
$\norm{{\rho_0}(\RtV-\Pi_p \RtV )^{(\ell)}}_{L^\infty(I_n;L^2)}
=\norm{{\rho_0}\Pi_p^{n,\bot}(\RtV-V )^{(\ell)}}_{L^\infty(I_n;L^2)}$.
We consider the cases $\ell=2$ and $\ell=3$ separately.
For  $\ell=2$,  since ${\rho_0}$ in constant on $I_n\times K$, the $L^\infty$-to-$L^2$
polynomial inverse inequality, see, e.g.,
\cite[Theorem 3.92]{Schwab:1998}, implies
\begin{equation*}
    \norm{{\rho_0}(\RtV-\Pi_p \RtV )'' }_{L^\infty(I_n;L^2)} \leq 4\sqrt{2} (q_n+1) \dt_n^{-\frac{1}{2}} \norm{{\rho_0}(\RtV(t)-\Pi_p \RtV(t))'' }_{L^2(I_n;L^2)}.
\end{equation*}
Then we proceed as in~\eqref{eq:bound_RtU-d} to get
$
\norm{{\rho_0}(\RtV-\Pi_p \RtV )'' }_{L^2(I_n;L^2)} \leq q_n\dt_n^{-\frac{1}{2}} \norm{{\rho_0}\Pi_p^{n,\bot}[V']_{n-1}}$.
The case $\ell=2$ is concluded by observing that $V'(t_{n-1}^+)=\Pi_p^nV'(t_{n-1}^+)$.

For $\ell=3$, we use the $H^1$-to-$L^2$ \revb{and the $L^\infty$-to-$L^2$}
polynomial inverse inequalities, see, e.g.,
\cite[Theorems 3.91 and 3.92]{Schwab:1998} to find
\begin{equation*}
    \norm{{\rho_0}(\RtV-\Pi_p \RtV )^{(3)}}_{L^\infty(I_n;L^2)} \leq 8\sqrt{6} (q_n+1)^3 \dt_n^{-1} \norm{{\rho_0}(\RtV(t)-\Pi_p \RtV(t))'' }_{L^2(I_n;L^2)}.
\end{equation*}
Then we proceed as in the case $\ell=2$ to conclude the proof.
\end{proof}

\subsection{A Hermite-type time reconstruction}
\label{subsection:time-average}

\begin{definition}[A Hermite-type polynomial basis]
For all $n\in\calN$,
we consider the cubic Hermite polynomials $\{\phi_i^n\}_{i=1}^4$, defined as
\begin{equation*}
\begin{cases}
\phi_1^n(t_{n-1})=1,     & \phi_1^n(t_n)=(\phi_1^n)'(t_{n-1})=(\phi_1^n)'(t_n)=0\\
\phi_2^n(t_n)=1,         & \phi_2^n(t_{n-1})=(\phi_2^n)'(t_{n-1})=(\phi_2^n)'(t_n)=0\\
(\phi_3^n)'(t_{n-1})=1,  & \phi_3^n(t_n)=\phi_3^n(t_{n-1})=(\phi_3^n)'(t_n)=0\\
(\phi_4^n)'(t_n)=1,      & \phi_4^n(t_{n-1})=(\phi_4^n)'(t_{n-1})=\phi_4^n(t_n)=0.\\
\end{cases}
\end{equation*}
Picking any basis $\{\hat{\phi}_i^n\}_{i=1}^{q_n-2}$
spanning \revb{$\mathbb{P}_{q_n-3}(I_n;\Real)$}, we define the $\calC^1$ bubble functions
\begin{equation*}
\phi_i^n \eqq \Big( (t-t_{n-1})(t_n-t) \Big)^2 \hat{\phi}_{i-4}^n
\qquad\qquad\qquad \forall i = 5,\dots q_n+2.
\end{equation*}
\end{definition}
The set $\{\phi_i^n\}_{i=1}^{q_n+2}$
forms a basis of $\mathbb{P}_{q_n+1}(I_n;\Real)$.
In the remainder of this section, $V\in\mathbb{P}_{\bq+1}(J;L^2(\Dom))$.
\begin{definition}[Hermite-type time reconstruction operator]\label{def: Hermite-type time reconstruction operator}
Noting the expansion $V|_{I_n}=V(t_{n-1}^+)\phi^n_1 + V(t_n^-)\phi^n_2 + \sum_{i=3}^{q_n+2} V_i \phi^n_i$,  with $V(t_N^+)\eqq V(t_N^-)$,
we define the Hermite-type time reconstruction by
\begin{subequations}
\begin{align}
\calI_{\mathfrak c}(V)|_{I_n} &\eqq \frac{V(t_{n-1}^-)+V(t_{n-1}^+)}{2}\phi_1^n + \frac{V(t_n^-)+V(t_n^+)}{2}\phi_2^n + \sum_{i=3}^{q_n+2} V_i \phi_i^n \qquad \forall n \in \calN\setminus\{1\},\label{eq:I_cfrak-2}\\
\calI_{\mathfrak c}(V)|_{I_1} &\eqq V(t_{0}^+)\phi_1^1 + \frac{V(t_1^-)+V(t_1^+)}{2}\phi_2^1 + \sum_{i=3}^{q_1+2} V_i \phi_i^1,
\end{align}
\revb{where the subscript $\mathfrak c$
stands for ``continuous''.}
\end{subequations}
\end{definition}
In other words, the reconstruction operator
modifies the coefficient of the first (resp. second) Hermite basis function
from its right (resp. left) limit to its average.
The reconstruction operator in~\eqref{eq:I_cfrak-2}
satisfies the following
conditions at the endpoints of each time interval: for all $n\in\calN$,
\begin{equation}\label{eq:I_cfrak}
\calI_{\mathfrak c}(V)(t_{n-1}^+)=\calI_{\mathfrak c}(V)(t_{n-1}^-),
\quad
\calI_{\mathfrak c}(V)'(t_{n-1}^+)=V'(t_{n-1}^+),
\quad
\calI_{\mathfrak c}(V)'(t_n^-)=V'(t_n^-),
\end{equation}
\revb{and, for $n=1$,
\begin{equation}
    \calI_{\mathfrak c}(V)(t_{n-1}^+)=V(t_0^+),
\quad
\calI_{\mathfrak c}(V)'(t_0^+)=V'(t_0^+),
\quad
\calI_{\mathfrak c}(V)'(t_1^-)=V'(t_1^-),
\end{equation}
}
i.e., \bf\emph{it maps piecewise polynomials
into $\mathcal C^0$ piecewise polynomials}}
as discussed in the next result.
\begin{lemma}[Smoothness of $\calI_{\mathfrak c}$]
\label{lemma:DGtoCG-smoothness}
For any $V\in \mathbb{P}_{\bq+1}(J;L^2(\Dom))$,
$\calI_{\mathfrak c}(V)\in \calC^0(J;L^2(\Dom))$.
If~$V$ is also continuous at all time nodes,
then $\calI_{\mathfrak c}(V)=V$.
In addition, if $V'\in\calC^0(J;L^2(\Dom)$,
then $\calI_{\mathfrak c}(V)\in \calC^1(J;L^2(\Dom))$.
\end{lemma}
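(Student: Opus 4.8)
The plan is to reduce the three assertions to the endpoint identities~\eqref{eq:I_cfrak}, which I would first establish by a direct evaluation of $\calI_{\mathfrak c}(V)|_{I_n}$ against the Hermite basis. The key bookkeeping facts are: $\phi_1^n$ is the only basis function with nonzero value at $t_{n-1}$ (namely $1$) and $\phi_2^n$ the only one with nonzero value at $t_n$; $\phi_3^n$ (resp.\ $\phi_4^n$) is the only basis function with nonzero first derivative at $t_{n-1}$ (resp.\ $t_n$), since the $\calC^1$-bubbles $\phi_i^n$, $i\ge 5$, vanish together with their first derivatives at both endpoints of $I_n$. Matching this against the expansion of $V|_{I_n}$ written before~\eqref{eq:I_cfrak-2} shows in passing that the coefficients $V_3,V_4$ there are exactly $V'(t_{n-1}^+)$ and $V'(t_n^-)$. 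Evaluating~\eqref{eq:I_cfrak-2} at the nodes then yields, for $n\ge 2$, $\calI_{\mathfrak c}(V)(t_{n-1}^+)=\tfrac12\big(V(t_{n-1}^-)+V(t_{n-1}^+)\big)$ and $\calI_{\mathfrak c}(V)(t_n^-)=\tfrac12\big(V(t_n^-)+V(t_n^+)\big)$, while in the derivative only the $\phi_3^n,\phi_4^n$ contributions survive at the endpoints, giving $\calI_{\mathfrak c}(V)'(t_{n-1}^+)=V_3=V'(t_{n-1}^+)$ and $\calI_{\mathfrak c}(V)'(t_n^-)=V_4=V'(t_n^-)$; the same computation on $I_1$ gives the analogous identities with $\calI_{\mathfrak c}(V)(t_0^+)=V(t_0^+)$ (no averaging, consistently with the fact that $V(t_0^-)$ is not available). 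This is precisely~\eqref{eq:I_cfrak}.

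For the $\calC^0$ claim I would then argue that, at an interior node $t_n$ with $1\le n\le N-1$, the left limit of $\calI_{\mathfrak c}(V)$ is computed on $I_n$ and equals $\tfrac12\big(V(t_n^-)+V(t_n^+)\big)$, whereas the right limit is computed on $I_{n+1}$ — i.e.\ the first identity in~\eqref{eq:I_cfrak} read with $n$ replaced by $n+1$ — and equals the same quantity; hence $\calI_{\mathfrak c}(V)$ is continuous across every interior node, so $\calI_{\mathfrak c}(V)\in\calC^0(J;L^2(\Dom))$. If moreover $V$ is continuous at all time nodes, then $V(t_{n-1}^-)=V(t_{n-1}^+)$ and $V(t_n^-)=V(t_n^+)$, so the two averaged coefficients in~\eqref{eq:I_cfrak-2} collapse to $V(t_{n-1}^+)$ and $V(t_n^-)$, i.e.\ exactly the coefficients of $\phi_1^n$ and $\phi_2^n$ in the Hermite expansion of $V|_{I_n}$; since the remaining coefficients are untouched, $\calI_{\mathfrak c}(V)|_{I_n}=V|_{I_n}$ for every $n$ (the case $I_1$ being immediate), whence $\calI_{\mathfrak c}(V)=V$.

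For the $\calC^1$ claim, having already established continuity, it remains to match first derivatives at interior nodes $t_n$: the left derivative is $V'(t_n^-)$ by the third identity in~\eqref{eq:I_cfrak}, and the right derivative is $V'(t_n^+)$ by the second identity read with $n$ replaced by $n+1$; the hypothesis $V'\in\calC^0(J;L^2(\Dom))$ forces $V'(t_n^-)=V'(t_n^+)$, so the one-sided derivatives agree and $\calI_{\mathfrak c}(V)'\in\calC^0(J;L^2(\Dom))$, i.e.\ $\calI_{\mathfrak c}(V)\in\calC^1(J;L^2(\Dom))$. I do not anticipate any real obstacle: the proof is essentially an unwinding of the definition, and the only points requiring care are the Hermite-basis conventions (that $\phi_3^n,\phi_4^n$ carry the endpoint derivative data while the bubbles contribute nothing at the nodes) and the special status of the first interval, where the $\phi_1^1$-coefficient is deliberately left unaveraged; since none of the three claims concerns the node $t_0$, this last point is harmless.
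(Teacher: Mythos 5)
Your proof is correct and is exactly the direct verification the paper intends: the lemma is stated with no written proof (the paper treats it as an immediate consequence of the endpoint identities~\eqref{eq:I_cfrak}, which it records just before the lemma), and your unwinding of the Hermite-basis conventions supplies precisely the omitted bookkeeping. The only point worth flagging — that the first identity of~\eqref{eq:I_cfrak} is vacuous at $t_0$ and the unaveraged $\phi_1^1$-coefficient is harmless — you have already addressed.
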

\revr{
\begin{proof}
The proof follows from
Definition~\ref{def: Hermite-type time reconstruction operator};
cf. \cite[Prop. 3.2]{Dong-Mascotto-Wang:2024}.
\end{proof}
}
Next, we estimate the error between~$V$ and~$\calI_{\mathfrak c}(V)$
by the jump of~$V$ at the time nodes in several norms.
\begin{lemma}[Computable bounds for $V- \calI_{\mathfrak c}(V)$]
\label{lemma:DGtoCG-approx}
Let~$V\in \mathbb{P}_{\bq+1}(J;L^2(\Dom))$.
Then, for all $n\in \calN$, we have
\begin{subequations}\label{eq:bound_I_Lfrak}
\begin{align}
\norm{V-\calI_{\mathfrak c}(V)}_{L^1(I_n;L^2)}
& \leq \frac{\dt_n}{4} \Big( \norm{ [V]_{n-1} } + \norm{ [V]_n } \Big),
        \label{eq:bound_I_Lfrak-a}\\
\norm{V'-\calI_{\mathfrak c}(V)'}_{L^1(I_n;L^2)}
&\leq \frac{1}{2} \Big( \norm{ [V]_{n-1} } + \norm{ [V]_n }  \Big).
        \label{eq:bound_I_Lfrak-b}\\
\norm{V-\calI_{\mathfrak c}(V)}_{L^\infty(I_n;L^2)}
&\leq \frac{1}{2} \Big( \norm{ [V]_{n-1} } + \norm{ [V]_n }  \Big),
        \label{eq:bound_I_Lfrak-c}
\end{align}
\end{subequations}
In addition, for all $w\in L^{\infty}((0,\xi];L^2(\Dom))$, $w|_{I_n}\in W^{1,\infty}(I_n;L^2(\Dom))$ with $w(\xi)=0$ for a given $0<\xi \leq T$, let $m\in\calN$ the index such that $\xi\in I_m$, and set $\Tilde{\dt}_n \eqq 2\dt_n+\dt_{n-1} + (\xi-t_n) |\dt_{n-1}/\dt_n-1|$. We have
\begin{equation}\label{eq:bound_I_Lfrak_accumulated}
\begin{split}
    \Big| \int_0^\xi (w,(I_d-\calI_{\mathfrak c})(V)){\rm d}t \Big| \leq \norm{w'}_{L^\infty((0,\xi); L^2)} \Big [\sum_{n=1}^{m} \Big(  \frac{\dt_n\Tilde{\dt}_n}{4} \norm{[V]_{n-1}} \Big) + \frac{\dt_m^2}{4}\norm{[V]_m} \Big].
\end{split}
\end{equation}

\end{lemma}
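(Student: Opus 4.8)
The plan is to start from the explicit form of the reconstruction error. Since $\calI_{\mathfrak c}$ only modifies the coefficients of the first two cubic Hermite basis functions, comparing the expansion of $V|_{I_n}$ with~\eqref{eq:I_cfrak-2} gives, for every $n\ge 2$,
$(I_d-\calI_{\mathfrak c})(V)|_{I_n}= \tfrac12[V]_{n-1}\,\phi_1^n - \tfrac12[V]_n\,\phi_2^n$,
whereas on $I_1$ the coefficient of $\phi_1^1$ is left untouched, so that $(I_d-\calI_{\mathfrak c})(V)|_{I_1}=-\tfrac12[V]_1\,\phi_2^1$. Letting $m\in\calN$ be the index with $\xi\in I_m$ and splitting $\int_0^\xi(w,(I_d-\calI_{\mathfrak c})(V))\,{\rm d}t$ over $I_1,\dots,I_{m-1}$ and the truncated interval $(t_{m-1},\xi]$, I would regroup the terms according to the time node to which they are anchored: each jump $[V]_k$, $1\le k\le m-1$, enters through $-\tfrac12[V]_k\,\phi_2^k$ on $I_k$ and through $+\tfrac12[V]_k\,\phi_1^{k+1}$ on $I_{k+1}\cap(0,\xi]$, while $[V]_m$ enters only through $-\tfrac12[V]_m\,\phi_2^m$ on $(t_{m-1},\xi]$.

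The crucial observation is that $\int_{I_n}\phi_1^n\,{\rm d}t=\int_{I_n}\phi_2^n\,{\rm d}t=\dt_n/2$, so for an interior node the two contributions of $[V]_k$ almost cancel. Writing $g_k$ for the function equal to $\phi_1^{k+1}$ on $I_{k+1}$ and to $-\phi_2^k$ on $I_k$, their sum is $\tfrac12\int(w(t),[V]_k)\,g_k(t)\,{\rm d}t$ with $\int g_k\,{\rm d}t=\tfrac12(\dt_{k+1}-\dt_k)$. I would split this into a mean part $\tfrac12(w(t_k),[V]_k)\int g_k\,{\rm d}t$, bounded by $\tfrac14|\dt_{k+1}-\dt_k|\,\norm{w(t_k)}\,\norm{[V]_k}$ with $\norm{w(t_k)}=\norm{w(t_k)-w(\xi)}\le\norm{w'}_{L^\infty((0,\xi); L^2)}(\xi-t_k)$, using the continuity of $w$ on $(0,\xi]$, and a remainder $\tfrac12\int(w(t)-w(t_k),[V]_k)\,g_k(t)\,{\rm d}t$, bounded via $\norm{w(t)-w(t_k)}\le\norm{w'}_{L^\infty((0,\xi); L^2)}|t-t_k|$ by $\tfrac12\norm{w'}_{L^\infty((0,\xi); L^2)}\norm{[V]_k}$ times the first Hermite moments $\int_{I_{k+1}}(t-t_k)\phi_1^{k+1}\,{\rm d}t=\tfrac3{20}\dt_{k+1}^2$ and $\int_{I_k}(t_k-t)\phi_2^k\,{\rm d}t=\tfrac3{20}\dt_k^2$ (crudely $\le\tfrac12\dt_{k+1}^2$ and $\le\tfrac12\dt_k^2$). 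The interval $I_1$ is subsumed in this pattern, with the $\phi_1^1$-part simply absent, so it feeds only the $\phi_2^1$ half of the node $t_1$.

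For the last node the two-interval cancellation is unavailable, but on $\overline{I_m}$ one still has $\norm{w(t)}=\norm{w(t)-w(\xi)}\le\norm{w'}_{L^\infty((0,\xi); L^2)}(\xi-t)$ for $t\in(t_{m-1},\xi]$; combined with $0\le\phi_2^m\le1$ this gives directly that the $\phi_2^m$-contribution of $[V]_m$ is bounded by $\tfrac14\dt_m^2\,\norm{w'}_{L^\infty((0,\xi); L^2)}\norm{[V]_m}$, which is precisely the last term of~\eqref{eq:bound_I_Lfrak_accumulated} (using $\tfrac12\int_{t_{m-1}}^\xi(\xi-t)\,{\rm d}t=\tfrac12(\xi-t_{m-1})^2\le\tfrac12\dt_m^2$ and the extra factor $\tfrac12$ from the coefficient of $\phi_2^m$). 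The $\phi_1^m$-part of $[V]_{m-1}$ on the truncated interval is treated as in the interior case, now with $\int_{I_m}\phi_1^m=\dt_m/2$ replaced by $\int_{t_{m-1}}^\xi\phi_1^m\in[0,\dt_m/2]$ and the mass mismatch against $\phi_2^{m-1}$ on $I_{m-1}$ bounded by $\big|\int_{t_{m-1}}^\xi\phi_1^m-\dt_m/2\big|+\tfrac12|\dt_m-\dt_{m-1}|\le(t_m-\xi)+\tfrac12|\dt_m-\dt_{m-1}|$.

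It then remains to collect, node by node, the bound on the coefficient of $\norm{[V]_{n-1}}$: the mean part delivers $\tfrac14|\dt_n-\dt_{n-1}|(\xi-t_{n-1})$, and writing $\xi-t_{n-1}=(\xi-t_n)+\dt_n$ produces the term $\tfrac14(\xi-t_n)|\dt_{n-1}-\dt_n|=\tfrac14\dt_n(\xi-t_n)\,|\dt_{n-1}/\dt_n-1|$ together with an $O(\dt_n|\dt_n-\dt_{n-1}|)$ remnant which, added to the Hermite-moment remainders of orders $\dt_n^2$ and $\dt_{n-1}^2$, is to be absorbed into $\tfrac14\dt_n(2\dt_n+\dt_{n-1})$; summing over $n$ then yields $\sum_{n=1}^{m}\tfrac14\dt_n\Tilde{\dt}_n\norm{[V]_{n-1}}$, with the $n=1$ summand carrying the vacuous $\norm{[V]_0}$ added nonnegatively. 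I expect the main obstacle to be exactly this last bookkeeping step: keeping every constant explicit while rearranging the telescoped sum, and in particular fitting the $O(\dt_{n-1}^2)$ moment remainder into the $O(\dt_n\Tilde{\dt}_n)$ budget — this is where the concentration of the Hermite moments near the common node $t_{n-1}$ (the sharp constants $\tfrac3{20}$ above, rather than $\tfrac12$), together with comparability of two consecutive time-steps, is needed.
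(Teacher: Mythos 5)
Your route to \eqref{eq:bound_I_Lfrak_accumulated} is genuinely different from the paper's. The paper integrates by parts on each $I_n$ against antiderivatives $\Phi_1^n,\Phi_2^n$ of $\phi_1^n,\phi_2^n$ whose integration constants are tuned ($b_1^n=-\dt_{n-1}/2$, $b_2^n=0$) so that the boundary contributions at each interior node $t_n$ telescope down to the single term $(w(t_n),\tfrac12[V]_{n-1})\tfrac{\dt_n-\dt_{n-1}}{2}$, and then bounds $\norm{\Phi_i^n}_{L^\infty(I_n)}$ crudely; you instead regroup by node, exploit the cancellation of the zeroth moments $\int_{I_{k+1}}\phi_1^{k+1}-\int_{I_k}\phi_2^k=\tfrac12(\dt_{k+1}-\dt_k)$, and Taylor-expand $w$ about $t_k$, paying the first Hermite moments $\tfrac{3}{20}\dt^2$. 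The two arguments are discrete integration-by-parts duals of each other. Your explicit error formula, the moment values, the special role of $I_1$ (no $\phi_1^1$ correction), and the last-node bound $\tfrac{\dt_m^2}{4}\norm{[V]_m}$ are all correct, and \eqref{eq:bound_I_Lfrak-a}--\eqref{eq:bound_I_Lfrak-c} follow at once from the same formula together with $\norm{\phi_i^n}_{L^1(I_n)}=\dt_n/2$, $\norm{(\phi_i^n)'}_{L^1(I_n)}=1$, $\norm{\phi_i^n}_{L^\infty(I_n)}=1$, exactly as in the paper.

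The one genuine loose end is the final absorption, which you flag yourself. Collecting your terms, the coefficient of $\norm{[V]_{n-1}}$ is $\tfrac14\dt_n|\dt_n-\dt_{n-1}|+\tfrac{3}{40}(\dt_n^2+\dt_{n-1}^2)$ plus the $(\xi-t_n)$-term, and fitting this under $\tfrac14\dt_n(2\dt_n+\dt_{n-1})$ fails when $\dt_{n-1}>3\dt_n$; as written, your argument therefore proves the lemma only under a local comparability assumption on consecutive time-steps that the statement does not make, so you must either add that hypothesis or enlarge $\Tilde{\dt}_n$. In fairness, the paper's own collection step has the same feature: it charges $\norm{a_{n-1}\Phi_2^{n-1}}_{L^1(I_{n-1};L^2)}\le\tfrac{\dt_{n-1}^2}{4}\norm{[V]_{n-1}}$ against the residual budget $\tfrac{\dt_n^2}{4}$ left inside $\tfrac{\dt_n\Tilde{\dt}_n}{4}$ after the $\Phi_1^n$ and boundary contributions, which requires $\dt_{n-1}\le\dt_n$; your sharper moment constants actually leave more slack. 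The discrepancy is thus at the level of the stated constant $\Tilde{\dt}_n$ rather than a structural flaw in your approach, but it is precisely the step you would need to make airtight before the proof is complete.
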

\begin{proof}
For all $n\in\calN$, direct computations \revb{give}
$\phi_1^n=2\Big(\frac{t-t_{n-1}}{\dt_n}\Big)^3 - 3\Big(\frac{t-t_{n-1}}{\dt_n}\Big)^2 +1$ and  $\phi_2^n=-2\Big(\frac{t-t_{n-1}}{\dt_n}\Big)^3 + 3\Big(\frac{t-t_{n-1}}{\dt_n}\Big)^2$.
We infer that
\begin{equation*}
    \norm{\phi_1^n}_{L^1(I_n)} = \norm{\phi_2^n}_{L^1(I_n)} = \frac{\dt_n}{2},
    \qquad
    \norm{(\phi_1^n)'}_{L^1(I_n) }= \norm{(\phi_2^n)'}_{L^1(I_n)} =1,
    \qquad
    \norm{\phi_1^n}_{L^\infty(I_n)}=\norm{\phi_2^n}_{L^\infty(I_n)}=1.
\end{equation*}
Definition~\eqref{eq:I_cfrak-2} further implies that in $I_n$,
\begin{equation*}
    V - \calI_{\mathfrak c}(V)= \frac{[V]_{n-1}}{2} \phi_1^n - \frac{[V]_n}{2} \phi_2^n.
\end{equation*}
We combine the two displays above for all $n\in\calN$ to get \eqref{eq:bound_I_Lfrak-a}, \eqref{eq:bound_I_Lfrak-b}, and \eqref{eq:bound_I_Lfrak-c}.

To show~\eqref{eq:bound_I_Lfrak_accumulated},
we introduce $\Phi_1^n \eqq \frac{(t-t_{n-1})^4}{2\dt_n^3} - \frac{(t-t_{n-1})^3}{\dt_n^2} + t-t_{n-1} + b_1^n$ and $\Phi_2^n \eqq \frac{-(t-t_{n-1})^4}{2\dt_n^3} + \frac{(t-t_{n-1})^3}{\dt_n^2} + b_2^n$ as the anti-derivatives of $\phi_1^n$ and $\phi_2^n$, respectively, with $b_1^n, b_2^n \in\Real$ to be determined later.
\revb{Let $a_n \eqq \frac{[V]_n}{2}$}.
Using that $[V]_0=0$, $w(\xi)=0$, $\Phi_1^n(t_n^-)=\frac{\dt_n}{2}+b_1^n$, $\Phi_1^n(t_{n-1}^+)=b_1^n$, $\Phi_2^n(t_n^-)=\frac{\dt_n}{2}+b_2^n$ and $\Phi_2^n(t_{n-1}^+)=b_2^n$ , we have
\begin{align*}
    \int_0^\xi (w,(I_d-\calI_{\mathfrak c})(V)) {\rm d}t
    &= \sum_{n=1}^{m-1}  \Big( w,  a_{n-1} \Phi_1^n - a_n \Phi_2^n \Big)\Big|_{t_{n-1}^+}^{t_n^-} + \Big( w,  a_{m-1} \Phi_1^m - a_m \Phi_2^m \Big)\Big|_{t_{m-1}^+}^{\xi}\\
    &\quad - \sum_{n=1}^{m-1} \int_{I_n} \Big( w',a_{n-1} \Phi_1^n - a_n \Phi_2^n \Big){\rm d}t - \int_{t_{m-1}}^\xi \Big( w', a_{m-1} \Phi_1^m - a_m \Phi_2^m \Big){\rm d}t\\
    &\leq \norm{w'}_{L^\infty((0,\xi);L^2)} \sum_{n=1}^m \Big( \norm{a_{n-1}\Phi_1^n}_{L^1(I_n;L^2)} + \norm{a_n\Phi_2^n}_{L^1(I_n;L^2)} \Big)\\
    &\quad + \sum_{n=1}^{m-1} \Big( w(t_n) , a_{n-1} (\frac{\dt_n}{2} + b_1^n) -a_n ( \frac{\dt_n}{2} + b_2^n + b_1^{n+1} ) + a_{n+1} b_2^{n+1}  \Big).
\end{align*}
We fix $b_1^n \eqq \frac{-\dt_{n-1}}{2}$ and $b_2^n \eqq 0$.
\revb{Note that
\begin{align*}
    &\int_{I_n} |\Phi_1^n| \leq \int_{I_n} \Big( \frac{(t-t_{n-1})^4}{2\dt_n^3} - \frac{(t-t_{n-1})^3}{\dt_n^2} + t-t_{n-1} \Big) \mathrm{d}t + \int_{I_n} |b_1^n|\mathrm{d}t =\frac{7\dt_n^2}{20} + \frac{\dt_n\dt_{n-1}}{2}\leq \frac{\dt_n(\dt_n+\dt_{n-1})}{2},\\
    &\int_{I_n} |\Phi_2^n|=\int_{I_n} \Big( -\frac{(t-t_{n-1})^4}{2\dt_n^3} + \frac{(t-t_{n-1})^3}{\dt_n^2} \Big) \mathrm{d}t = \frac{3\dt_n^2}{20} \leq \frac{\dt_n^2}{2}.
\end{align*}
}
This leads to
\begin{align*}
    &\norm{a_{n-1}\Phi_1^n}_{L^1(I_n;L^2)} \leq \frac{\dt_n(\dt_n+\dt_{n-1})}{4}\norm{[V]_{n-1}}, \qquad \norm{a_n\Phi_2^n}_{L^1(I_n;L^2)} \leq \frac{\dt_n^2}{4}\norm{[V]_n},\\
    &\Big( w(t_n) , a_{n-1} (\frac{\dt_n}{2} + b_1^n)  \Big) \leq (\xi-t_n)\norm{w'}_{L^\infty((0,\xi);L^2)} \frac{\dt_n}{4} \Big| \frac{\dt_{n-1}}{\dt_n}-1\Big| \norm{[V]_{n-1}}.
\end{align*}
Inserting the above bounds in $\int_0^\xi (w,(I_d-\calI_{\mathfrak c})(V)){\rm d}t$, we complete the proof.
\end{proof}

\begin{remark}\label{Rem:subopt}
Bounds~\eqref{eq:bound_I_Lfrak-b} and~\eqref{eq:bound_I_Lfrak-c} are optimal in $q_n$,
while bound~\eqref{eq:bound_I_Lfrak-a} is $q_n$-suboptimal by one order.
\end{remark}

\begin{remark}[Restriction on the adaptive time-steps]
All terms in the \revb{summation} in \eqref{eq:bound_I_Lfrak_accumulated}
remain $O(\dt_n^2)$ if $|\dt_{n-1}-\dt_n|=O(\dt_n^2/(\xi-t_n))$
and/or if $\norm{[V]_{n-1}}$ is small enough.
\end{remark}

\subsection{An elliptic reconstruction}
\label{subsection:elliptic-reconstruction}
Let $\Delta_{pw}$  denote the broken Laplace operator
(`$pw$' stands for `piecewise'), defined as $\Delta_{pw} V|_{I_n\times K} \eqq \Delta V$
for all~$n\in\calN\cup \{0\}$
with $I_0=\{0\}$, $K\in \calT_h^n$,
and~$V\in V_h^n$.
Consider also the discrete Laplacian
$\Delta_h:H^1(\Dom) \to H^1(\Dom)$,
whose restriction $\Delta_h^n \eqq \Delta_h|_{I_n}:V_h^n \to V_h^n$
on each time interval~$I_n$,
$n\in \calN$, is defined as follows:
for~$W\in V_h^n$,
\begin{equation}\label{eq:discrete_laplace}
\langle-\Delta_h^n W,Z \rangle
= (-\Delta_h^n W,Z)
\eqq(\nabla W, \nabla Z),
\qquad\qquad
\text{for all } Z \in V_h^n.
\end{equation}
\begin{definition}[Elliptic reconstruction]
For  $U\in V_{p,\bq}$ (below $U$ will be the solution to~\eqref{eq:Walkington_scheme}) and for~$t\in I_n$, $n\in \calN$,
we define its elliptic reconstruction $\RxU$
 with $\RxU|_{I_n}\in \mathbb{P}_{q_n}(I_n;H^1_0(\Dom)\cap H^2(\Dom))$ of~$U$, by
\begin{equation}\label{eq:RxU}
(\nabla \RxU,\nabla w)
\eqq  (g,w)
\quad \text{for all } w\in H^1_0(\Dom),
\quad\text{where }
g = g(U)
\eqq -\Delta_h U -(\Pi_{p,\bq-1}-\Pi_{\bq-1})f + \Pi_p^{\bot}\RtU''.
\end{equation}
At $t=t_0^-$, upon setting $g(t_0^-) \eqq -\Delta_h^0 u_{0,h}$ and $g'(t_0^-) \eqq -\Delta_h^0 u_{1,h}$, we define $ \RxU(t_0^-)\in H^1_0(\Dom)$ and $ \RxU'(t_0^-)\in H^1_0(\Dom)$ to be the solutions to the elliptic equations
$  -\Delta \RxU(t_0^-) = g(t_0^-) $ and
$    -\Delta \RxU'(t_0^-) = g'(t_0^-)$ equipped
with homogeneous Dirichlet boundary conditions.
\end{definition}
The function $g$ in~\eqref{eq:RxU}
is a piecewise polynomial in time on each time interval
taking values in $L^2(\Dom)$.
An integration by parts in space
on the left-hand side of~\eqref{eq:RxU}
implies that $\Delta \RxU$ is also
a polynomial in time; moreover we have that
\begin{equation} \label{eq:RxU_strong}
-\Delta \RxU  = g
\in \mathbb{P}_{\bq}(J;L^2(\Dom)).
\end{equation}
For a given positive constant~$c_{L^2}$ independent of
the spatial mesh-size and $p$,
an arbitrary finite element space $V_h$ with associated cell set $\calT_h$
and interior facet set~$\calF_I$, $w$ in $H^1_0(\Dom)$ with $\Delta w|_{K}$
in $L^2(K)$ for all $K$ in $\calT_h$, $s$ in $L^2(\Dom)$,
we introduce the spatial residual-type error estimator
\begin{equation} \label{eq:estim_L2}
\begin{split}
\calE(V_h,w,s)
\eqq c_{L^2} \Big( \norm{\frac{h^2}{p^2} (s+\Delta_{pw}w)}^2
    + \calJ^2(\calF_I,w) \Big)^\frac{1}{2},
\quad
\calJ(\calF_I,w)^2
    \eqq  \sum_{F\in\calF_I} \frac{h_F^3}{p^3}
        \norm{ [\![ \nF \cdot \nabla w ]\!] }_{L^2(F)}^2.
\end{split}
\end{equation}
The error between~$U$ to~\eqref{eq:Walkington_scheme}
and its elliptic reconstruction~$\RxU$
in~\eqref{eq:RxU} can be measured
in terms of the spatial error estimator above;
of course, other choices of error estimators are also possible. For the sake of brevity, we introduce the notation $\Delta_h^{\text{diff}}\eqq \Delta_{pw}-\Delta_h$.

\begin{lemma}[Computable bounds for the elliptic reconstruction operator]
\label{lemma:computable-bounds-spatial}
Let $U\in V_{p,\bq}$, and $\RxU$ and $g$ be as in~\eqref{eq:RxU}.
Then, for all $n\in\calN$, $t\in I_n$, $l=0,1$, we have the error bounds
\begin{equation}\label{eq:bound_RxU}
\begin{split}
    \norm{(U-\RxU)^{(l)}(t)}
    \leq c_{L^2} \Big(
    & \norm{\frac{h_n^2}{p^2} \Delta_h^{\text{diff}}U^{(l)}(t)} + \calJ(\calF_I^n,U^{(l)}(t))\\
    & + \norm{\frac{h_n^2}{p^2} (\Pi_{\bq-1}\Pi_p^{n,\bot}f)^{(l)}(t) } + c_3(q_n;l+2) \dt_n^{-l-1} \norm{\frac{h_n^2}{p^2}\Pi_p^{n,\bot}U'(t_{n-1}^-)} \Big),
\end{split}
\end{equation}
\revb{where $c_{L^2}$ is a positive constant only depending on
the shape-regularity parameter of the spatial mesh.}
For~$n=0$ and $l=0,1$, we have
\begin{equation}
    \norm{(U-\RxU)^{(l)}(t_0^-)}
    \leq c_{L^2} \Big( \norm{\frac{h_0^2}{p^2} \Delta_h^{\text{diff}}U^{(l)}(t_0^-)} + \calJ(\calF_I^1,U^{(l)}(t_0^-))\Big).\label{eq:bound_RxU-c}
\end{equation}
In addition, the following bounds also hold true for all $n\in\calN\setminus \{1\}$ and $l=0,1$:
\begin{equation}\label{eq:bound_time_jump_U_RxU}
\begin{split}
    \norm{[(U-\RxU)^{(l)}]_{n-1}} &\leq  c_{L^2} \Big(  \norm{\frac{(h_n^{\ominus})^2}{p^2} [\Delta_h^{\text{diff}}U^{(l)}]_{n-1} } + \calJ(\calF_I^{n,\ominus},[U^{(l)}]_{n-1})\\
    &\quad + \norm{\frac{(h_n^{\ominus})^2}{p^2} [(\Pi_{\bq-1}\Pi_p^{\bot}f)^{(l)}]_{n-1}}  + \sum_{k=n-1}^n\frac{c_3(q_k;2)}{\dt_k^{l+1}}\norm{\frac{(h_n^{\ominus})^2}{p^2}\Pi_p^{k,\bot}U'(t_{k-1}^-)}\Big).
\end{split}
\end{equation}
For $n=1$, we have
\begin{subequations}
\begin{align}
    \norm{[U-\RxU]_{0}} &\leq  c_{L^2} \norm{\frac{(h_1^{\ominus})}{p^2} [(\Pi_{\bq-1}\Pi_p^{\bot}f)(t_0^+)}  \label{eq:bound_time_jump_U_RxU-c}\\
    \norm{[(U-\RxU)']_{0}} &\leq  c_{L^2} \Big(  \norm{\frac{(h_1^{\ominus})^2}{p^2} [\Delta_h^{\text{diff}}U']_{0} } + \calJ(\calF_I^{1,\ominus},[U']_{0}) + \norm{\frac{(h_1^{\ominus})}{p^2} [(\Pi_{\bq-1}\Pi_p^{\bot}f)'(t_0^+)}  \Big)\label{eq:bound_time_jump_U_RxU-d}.
\end{align}
\end{subequations}
\end{lemma}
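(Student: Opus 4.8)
The plan is to obtain every displayed bound from a single mechanism. By construction of $g$ in~\eqref{eq:RxU} --- writing $g = -\Delta_h U + r$ with $r\eqq \Pi_{\bq-1}\Pi_p^{\bot}f + \Pi_p^{\bot}\RtU''$ and using $(\Pi_{p,\bq-1}-\Pi_{\bq-1})f = -\Pi_{\bq-1}\Pi_p^{\bot}f$ --- the term $r(t)$ is, pointwise in~$t$, $L^2$-orthogonal to $V_h^n$, while $(-\Delta_h U,v_h) = (\nabla U,\nabla v_h)$ for $v_h\in V_h^n$ by~\eqref{eq:discrete_laplace}; hence $U(t)$ is the Ritz ($H^1$-Galerkin) projection onto $V_h^n$ of its elliptic reconstruction $\RxU(t)$ for the problem $-\Delta\RxU(t)=g(t)$, i.e. $(\nabla(\RxU-U)(t),v_h)=0$ for all $v_h\in V_h^n$. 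First I would run the convex-domain duality (Aubin--Nitsche) argument behind the estimator $\calE$: let $\psi\in H^1_0(\Dom)\cap H^2(\Dom)$ solve $-\Delta\psi=(U-\RxU)(t)$, so $\|D^2\psi\|\le\|(U-\RxU)(t)\|$ by convexity of $\Dom$; then $\|(U-\RxU)(t)\|^2 = (\nabla(U-\RxU)(t),\nabla(\psi-I_h\psi))$ for any $I_h\psi\in V_h^n$, by the orthogonality. Choosing $I_h$ a quasi-interpolant with the usual $hp$-explicit local approximation properties, integrating by parts element-wise over $\calT_h^n$, and using~\eqref{eq:discrete_laplace} to turn $-\Delta_h U$ into $-\nabla U\cdot\nabla(\cdot)$, exhibits the interior residual $g(t)+\Delta_{pw}U(t)$ and the inter-element flux jumps of $U(t)$, giving $\|(U-\RxU)(t)\|\le\calE(V_h^n,U(t),g(t))$ with $c_{L^2}$ absorbing the interpolation constants.

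For~\eqref{eq:bound_RxU} with $l=0$ I would then expand, on $I_n$, $g(t)+\Delta_{pw}U(t)=\Delta_h^{-}U(t)+\Pi_{\bq-1}\Pi_p^{n,\bot}f(t)+\Pi_p^{n,\bot}\RtU''(t)$ and apply the triangle inequality; the only term not already in the asserted form, $\Pi_p^{n,\bot}\RtU''(t) = (\RtU-\Pi_p^n\RtU)''(t)$, is bounded pointwise in $t$ by Lemma~\ref{lemma:bound_mesh_change_dtRtU} with $\rho_0=h_n^2/p^2$ and $\ell=2$. For $l=1$ I would note that on $I_n$ the operators $\Pi_p^n$ and $\Delta_h^n$ are time-independent, so $-\Delta\RxU=g$ differentiates in time to $-\Delta\RxU'=g'$, and the time derivative of the $V_h^n$-orthogonal part of $g$ is still $V_h^n$-orthogonal; thus $U'$ is the Ritz projection of $\RxU'$, and the same argument with $g'$, $U'$, and Lemma~\ref{lemma:bound_mesh_change_dtRtU} for $\ell=3$ (controlling $\Pi_p^{n,\bot}\RtU'''$) gives~\eqref{eq:bound_RxU}. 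Bound~\eqref{eq:bound_RxU-c} is the very same argument at $t_0^-$ on the mesh $\calT_h^0=\calT_h^1$, where $g(t_0^-)=-\Delta_h^0 u_{0,h}$, $g'(t_0^-)=-\Delta_h^0 u_{1,h}$, $U(t_0^-)=u_{0,h}$, $U'(t_0^-)=u_{1,h}$: no $f$- or $\RtU$-contribution appears, so only the $\Delta_h^{-}$ and flux-jump terms survive.

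For the time jumps I would apply the same circle of ideas to $[(U-\RxU)^{(l)}]_{n-1}$. Subtracting the elliptic equations for $\RxU^{(l)}(t_{n-1}^{+})$ (the $I_n$-limit) and $\RxU^{(l)}(t_{n-1}^{-})$ (the $I_{n-1}$-limit) gives $-\Delta[\RxU^{(l)}]_{n-1}=[g^{(l)}]_{n-1}$; and, crucially, $[U^{(l)}]_{n-1}$ satisfies $(\nabla[(U-\RxU)^{(l)}]_{n-1},v_h)=0$ for all $v_h\in V_h^{n,\ominus}=V_h^n\cap V_h^{n-1}$, since the $I_n$-limit is $V_h^n$-Galerkin, the $I_{n-1}$-limit is $V_h^{n-1}$-Galerkin, and on the intersection both hold simultaneously. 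Running the duality argument with a quasi-interpolant in $V_h^{n,\ominus}$ --- integrating by parts on the overlay mesh $\calT_h^{n,\oplus}$ and then using $h_n^{\oplus}\le h_n^{\ominus}$ pointwise --- produces the estimator built on $\calT_h^{n,\ominus}$; expanding $[g^{(l)}]_{n-1}+\Delta_{pw}[U^{(l)}]_{n-1}=[\Delta_h^{-}U^{(l)}]_{n-1}+[(\Pi_{\bq-1}\Pi_p^{\bot}f)^{(l)}]_{n-1}+[\Pi_p^{\bot}\RtU^{(l+2)}]_{n-1}$ and bounding the last term by Lemma~\ref{lemma:bound_mesh_change_dtRtU} separately on $I_{n-1}$ and on $I_n$ (triangle inequality, whence the sum over $k=n-1,n$) yields~\eqref{eq:bound_time_jump_U_RxU}. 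For $n=1$, $\calT_h^0=\calT_h^1$ forces $\calP_p^1$ to be the identity and $u_{1,h}\in V_h^1$, so $[U]_0=0$ and $\Pi_p^{1,\bot}\RtU''\equiv0$ on $I_1$, and the residuals collapse: only the $f$-term of $[g]_0$ remains for~\eqref{eq:bound_time_jump_U_RxU-c}, and the $\Delta_h^{-}$-, flux-jump- and $f$-terms of $[U']_0$, $[g']_0$ remain for~\eqref{eq:bound_time_jump_U_RxU-d}.

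The hard part will be the mesh-change bookkeeping in the last step: establishing that the \emph{a posteriori} estimate for the jump can be expressed purely through quantities on the intersection mesh $\calT_h^{n,\ominus}$. When $[U^{(l)}]_{n-1}=U^{(l)}(t_{n-1}^{+})-U^{(l)}(t_{n-1}^{-})$ is integrated by parts on the finer overlay $\calT_h^{n,\oplus}$, the facets of $\calT_h^{n,\oplus}$ not in $\calF_I^{n,\ominus}$ are internal to a cell of one of the two meshes, so the normal-flux jump there of the limit coming from the \emph{other} mesh vanishes; one must then check that the leftover refined-/coarsened-region terms either cancel in $[\![\,\cdot\,]\!]$ or can be reabsorbed into the nonstandard interior residual $[\Delta_h^{-}U^{(l)}]_{n-1}$ (via local inverse estimates on the refined region), so that no genuinely fine-mesh jump or residual term survives. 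A secondary, purely administrative difficulty is keeping all constants explicit --- $c_{L^2}$ from the quasi-interpolation/duality step and the $c_3(q;\cdot)$ from the inverse inequalities entering Lemma~\ref{lemma:bound_mesh_change_dtRtU}.
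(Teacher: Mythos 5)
Your proposal is correct and follows essentially the same route as the paper: Galerkin orthogonality of $U-\RxU$ (resp.\ of the jump, on $V_h^{n,\ominus}$), the standard residual-based $L^2$ \emph{a posteriori} bound $\norm{(U-\RxU)^{(l)}}\le\calE(\cdot,\cdot,\cdot)$ via duality on the convex domain, expansion of the residual $g^{(l)}+\Delta_{pw}U^{(l)}=\Delta_h^{-}U^{(l)}+(\Pi_{\bq-1}\Pi_p^{n,\bot}f)^{(l)}+\Pi_p^{n,\bot}\RtU^{(l+2)}$, and Lemma~\ref{lemma:bound_mesh_change_dtRtU} with $\rho_0=h_n^2/p^2$ (applied on $I_{n-1}$ and $I_n$ separately for the jump terms). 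The ``hard part'' you flag --- justifying that the duality/integration-by-parts step for $[U-\RxU]_{n-1}$ produces an estimator expressible purely on the intersection mesh $\calT_h^{n,\ominus}$, despite $[U]_{n-1}$ having fine-overlay flux jumps --- is a genuine subtlety that the paper's proof also does not elaborate (it simply invokes ``as before''), so your treatment is, if anything, more explicit about where the constants and mesh-change bookkeeping enter.
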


\begin{proof}
By construction, \eqref{eq:RxU} implies the Galerkin orthogonality
    $\big( \nabla (U-\RxU), \nabla W \big)
 =0$
    for all  $W\in V_h^n$,
standard arguments, see, e.g., \cite{Verfurth:2013}, reveal that
\begin{equation}\label{eq:bound_RxU_calE}
    \norm{(U-\RxU)(t)}\leq \calE(V_h^n, U(t),g(t)),
\end{equation}
 for specific choices of $c_{L^2}$ in \eqref{eq:estim_L2}.
Now, using the definition of $g$ in \eqref{eq:RxU} and $\calE(\cdot,\cdot,\cdot)$ in \eqref{eq:estim_L2}, we arrive at
\begin{align*}
    \norm{(U-\RxU)(t)} &\leq c_{L^2} \Big( \norm{\frac{h_n^2}{p^2} \Delta_h^{\text{diff}}U(t)} + \calJ(\calF_I^n,U(t)) + \norm{\frac{h_n^2}{p^2} (\Pi_{\bq-1}\Pi_p^{n,\bot}f)(t) } + \norm{\frac{h_n^2}{p^2} \Pi_p^{n,\bot}\RtU''(t) } \Big).
\end{align*}
Applying Lemma \ref{lemma:bound_mesh_change_dtRtU} with ${\rho_0}=\frac{h_n^2}{p^2}$, we find
$
    \norm{\frac{h_n^2}{p^2} \Pi_p^{n,\bot}\RtU''(t) } \leq c_3(q_n;2) \dt_n^{-1} \norm{\frac{h_n^2}{p^2}\Pi_p^{n,\bot}U'(t_{n-1}^-)},
$
which leads to \eqref{eq:bound_RxU} with $l=0$.
Inequality~\eqref{eq:bound_RxU} with $l=1$ and \eqref{eq:bound_RxU-c} are proved analogously.

Next, we show~\eqref{eq:bound_time_jump_U_RxU}.
Notice that $[\RxU]_{n-1}$ satisfies
  $  -\Delta [\RxU]_{n-1} = [g]_{n-1}$
    in  $L^2(\Dom)$,
with homogeneous Dirichlet boundary condition and, thus, we have the Galerkin orthogonality
$    (\nabla ([U-\RxU]_{n-1}) ,\nabla W)
= 0$ for all  $W\in V_h^{n,\ominus}$, implying, as before,
\begin{equation}\label{eq:bound_jump_RxU_calE}
    \norm{[U-\RxU]_{n-1}} \leq \calE( V_h^{n,\ominus},[U]_{n-1},[g]_{n-1}).
\end{equation}
Then, \eqref{eq:estim_L2}
and~\eqref{eq:bound_mesh_change_dtRtU}
lead to~\eqref{eq:bound_time_jump_U_RxU} with $l=0$. The
bound, for  $l=1$, as well as \eqref{eq:bound_time_jump_U_RxU-c} and \eqref{eq:bound_time_jump_U_RxU-d}
are proven completely analogously.
\end{proof}
\begin{remark}[Sharper bound on the elliptic reconstruction operator]
\label{remark:computability-hat-operator}
In the proof of Lemma~\ref{lemma:computable-bounds-spatial},
we used the inequality
\begin{align*}
        \norm{\frac{(h_n^{\ominus})^2}{p^2} [\Pi_p^{\bot}\RtU'']_{n-1} } \leq c_3(q_n;2) \dt_n^{-1}\norm{\frac{(h_n^{\ominus})^2}{p^2}\Pi_p^{n,\bot}U'(t_{n-1}^-)} + c_3(q_{n-1};2) \dt_{n-1}^{-1}\norm{\frac{(h_n^{\ominus})^2}{p^2}\Pi_p^{n-1,\bot}U'(t_{n-2}^-)},
\end{align*}
to derive an easily computable bound without computing $\RtU$.
Since $\RtU$ is a polynomial in time, $\RtU''$ can be computed practically;
this would lead to a sharper bound in the analysis
at the price of extra computations on the practical level.
\end{remark}

\subsection{A space-time reconstruction and Baker's test function}
\label{subsection:space-time-reconstruction}
\begin{definition}
We define the space-time reconstruction operator
\begin{equation}\label{eq:RU}
    \RU \eqq \calI_{\mathfrak c} \big( \widehat{\RxU} \big).
\end{equation}
\end{definition}
Due to Lemmas~\ref{lemma:smoothness-RtU} and~\ref{lemma:DGtoCG-smoothness}, we have
$\RU\in\calC^1(J;H^1_0(\Dom)\cap H^2(\Dom))$:
the space-time reconstruction operator maps piecewise polynomials
into globally $\calC^1$ in time, piecewise polynomials.

Let $U$ be the solution to the space-time finite element method~\eqref{eq:Walkington_scheme}. We define $\xi\in J$ and~$m\in \calN$ as
\begin{equation}\label{eq:xi}
    \norm{(u-\RU)(\xi)}_{L^2}
    \eqq
    \norm{u-\RU}_{L^\infty(L^2)}
    \qquad\qquad \text{with } \xi \in I_m.
\end{equation}
Inspired by the classical construction in~\cite{Baker:1976}, we set  $v\in\calC^0(J;H^1_0(\Dom))\cap \calC^1(J;L^2(\Dom))$, with
\begin{equation}\label{eq:v}
v(t) \eqq \int_t^\xi (u-\RU)(s)\,{\rm d}s,
\end{equation}
\revr{which, in the literature, is also known as Baker's test function.}
Next, we recall the following technical result.
\begin{lemma}[{\cite[Lemma 3.4]{Dong-Mascotto-Wang:2024}}, Approximation and Poincar\'e inequality for $v$]
\label{lemma:approx-w}
For~$m$ and~$v$ as
in~\eqref{eq:xi} and~\eqref{eq:v},
we have
\begin{subequations}\label{eq:appro_v}
\begin{align}
\norm{v-\Pi_{q}^\infty v }_{L^\infty(I_n;L^2)}
&\leq \frac{\dt_n}{\pi} c_4(q) \norm{v'}_{L^\infty(I_n;L^2)}
\qquad  n =1,\dots, m-1,
\quad  q \in \mathbb N, \label{eq:appro_v-a}\\
\norm{v}_{L^\infty((t_{m-1},\xi];L^2)}
&\leq \dt_m \norm{v'}_{L^\infty((t_{m-1},\xi];L^2)}, \label{eq:appro_v-b}
\end{align}
\end{subequations}
with $\Pi_{q}^\infty$ being
the $L^\infty$ projector \cite[Theorem 7.2]{Trefethen-2019} in time of order $q$, and
$c_4(q) \eqq \sqrt{\pi} $ if $q<3$, while $c_4(q) \eqq (q-2)^{-1}$ if $q\ge 3$.\qed
\end{lemma}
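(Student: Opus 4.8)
The plan is to establish the two estimates by independent arguments: \eqref{eq:appro_v-b} is an elementary Poincar\'e-type bound exploiting the endpoint condition $v(\xi)=0$, while \eqref{eq:appro_v-a} reduces, after an affine change of variables in time, to a $q$-explicit Jackson-type polynomial approximation estimate for a Lipschitz-in-time, $L^2(\Dom)$-valued function. For \eqref{eq:appro_v-b} I would use directly that $v(\xi)=0$, which is immediate from \eqref{eq:v}: for every $t\in(t_{m-1},\xi]$ the fundamental theorem of calculus gives $v(t)=-\int_t^\xi v'(s)\,{\rm d}s$, whence $\norm{v(t)}\le(\xi-t)\,\norm{v'}_{L^\infty((t_{m-1},\xi];L^2)}$; taking the supremum over $t$ and using $\xi-t\le\xi-t_{m-1}\le t_m-t_{m-1}=\dt_m$ yields \eqref{eq:appro_v-b}.

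For \eqref{eq:appro_v-a}, fix $n\in\{1,\dots,m-1\}$. First I would record that $v'=-(u-\RU)$ with $u\in\calC^0(\overline J;L^2(\Dom))$ (since $u\in H^1(J;L^2(\Dom))$) and $\RU\in\calC^1(J;L^2(\Dom))$, so that $v|_{I_n}\in W^{1,\infty}(I_n;L^2(\Dom))$ is Lipschitz in time with values in $L^2(\Dom)$; crucially, only this first-order information is available, because $v''=-(u-\RU)'$ involves $u'\in L^2(J;L^2(\Dom))$, so the estimate must be of genuine Jackson (first-derivative) type. After the affine change of variables $t=t_{n-1}+\tfrac{\dt_n}{2}(\widehat t+1)$ onto $\widehat I\eqq[-1,1]$, with $\widehat v(\widehat t)\eqq v(t)$ and $\norm{\widehat v'}_{L^\infty(\widehat I;L^2)}=\tfrac{\dt_n}{2}\norm{v'}_{L^\infty(I_n;L^2)}$, and since $\Pi_q^\infty\widehat v$ is a best $L^\infty(\widehat I)$-polynomial approximant of degree $\le q$ (with $\Pi_q^\infty$ being, by definition, affine-covariant), it suffices to produce one admissible degree-$\le q$ approximant of $\widehat v$ realising the claimed rate. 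For $q\ge3$ I would take $L_q\widehat v$, with $L_q$ a fixed linear, time-only operator onto $\mathbb{P}_q$ reproducing constants and realising the scalar Jackson rate of \cite[Theorem~7.2]{Trefethen-2019}; the scalar bound, which after rescaling reads $\norm{f-L_qf}_{L^\infty(I_n)}\le\tfrac{\dt_n}{\pi}c_4(q)\norm{f'}_{L^\infty(I_n)}$, transfers to the $L^2(\Dom)$-valued setting through the identity $(L_q\widehat v,\psi)=L_q(\widehat v,\psi)$ and $|\tfrac{{\rm d}}{{\rm d}t}(\widehat v,\psi)|\le\norm{\widehat v'}_{L^\infty(\widehat I;L^2)}$, taking the supremum over $\psi\in L^2(\Dom)$ with $\norm{\psi}\le1$.

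For the remaining cases $q\in\{1,2\}$ I would instead subtract the time-average $\overline{\widehat v}\eqq\tfrac12\int_{-1}^1\widehat v(s)\,{\rm d}s$ (a degree-$0$, hence admissible, polynomial): from $\widehat v(\widehat t)-\overline{\widehat v}=\tfrac12\int_{-1}^1\int_s^{\widehat t}\widehat v'(r)\,{\rm d}r\,{\rm d}s$ and $\int_{-1}^1|\widehat t-s|\,{\rm d}s=1+\widehat t^2\le2$ one obtains $\norm{\widehat v-\overline{\widehat v}}_{L^\infty(\widehat I;L^2)}\le\norm{\widehat v'}_{L^\infty(\widehat I;L^2)}=\tfrac{\dt_n}{2}\norm{v'}_{L^\infty(I_n;L^2)}$, which is majorised by $\tfrac{\dt_n}{\pi}c_4(q)\norm{v'}_{L^\infty(I_n;L^2)}$ since $\tfrac1\pi c_4(q)=\tfrac1{\sqrt\pi}>\tfrac12$ for $q<3$. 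Transforming back to $I_n$ and using that $\Pi_q^\infty v$ is at least as accurate as the exhibited approximant concludes \eqref{eq:appro_v-a}.

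The only genuinely delicate point is \eqref{eq:appro_v-a}: the bound has to be produced from Lipschitz (first-derivative) control of $v$ alone — invoking $v''$, the natural temptation, is not admissible at the required strength — so it is an honest Jackson-type inequality. The remaining technicalities are the clean passage from the scalar approximation theory of \cite{Trefethen-2019} to $L^2(\Dom)$-valued functions (straightforward for linear, time-only operators via the test-function argument above, and harmless since $\Pi_q^\infty$ is a best $L^\infty$-approximant) and the explicit bookkeeping of $c_4(q)$, in particular the $(q-2)^{-1}$ decay for $q\ge3$ and the crude $\sqrt\pi$ fall-back for $q\in\{1,2\}$.
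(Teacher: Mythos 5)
The paper itself offers no proof of this lemma: it is imported verbatim from the companion work \cite{Dong-Mascotto-Wang:2024} (Lemma~3.4 there) and closed without argument, so there is no in-paper derivation to compare your proof against; it must be judged on its own. Your proof of \eqref{eq:appro_v-b} is correct and is surely the intended one ($v(\xi)=0$ from \eqref{eq:v}, the fundamental theorem of calculus, and $\xi-t\le \dt_m$). Your overall strategy for \eqref{eq:appro_v-a} --- rescale to $[-1,1]$, exhibit one admissible degree-$q$ approximant, transfer a scalar bound to the $L^2(\Dom)$-valued setting by testing against $\psi\in L^2(\Dom)$ with $\norm{\psi}\le 1$, and use that $\Pi_q^\infty$ is a best $L^\infty$-approximant --- is structurally sound, and your treatment of $q<3$ via the time average (yielding the constant $\tfrac12\le \tfrac{1}{\sqrt{\pi}}$) is complete.

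The gap is the crux of the case $q\ge 3$: the asserted scalar inequality $\norm{f-L_qf}_{L^\infty(I_n)}\le \frac{\dt_n}{\pi(q-2)}\norm{f'}_{L^\infty(I_n)}$ is not what \cite[Theorem~7.2]{Trefethen-2019} provides. That theorem bounds the Chebyshev projection/interpolation error by $2V/(\pi\nu(q-\nu)^\nu)$ with $V$ the total variation of $f^{(\nu)}$, $\nu\ge 1$; taking $\nu=1$ requires control of $\mathrm{Var}(f')$, i.e.\ second-derivative information, which is incomparable with $\norm{f'}_{L^\infty}$. Nor can one substitute the variation of $f$ itself (which is at most $\dt_n\norm{f'}_{L^\infty}$): the resulting coefficient decay $|a_k|\lesssim k^{-1}$ has a divergent tail, and for truncated Chebyshev expansions of merely Lipschitz functions the uniform error is only $O(q^{-1}\log q)\norm{f'}_{L^\infty}$ in general, so no linear projection of this type realises the claimed rate from first-derivative data alone. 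Falling back instead on the genuine Jackson--Favard inequality for best uniform approximation, $E_q(f)\le \frac{\pi}{2(q+1)}\norm{f'}_{L^\infty([-1,1])}$, gives a constant that exceeds $\frac{2}{\pi(q-2)}$ once $q\ge 5$, so it does not imply the stated bound with $c_4(q)=(q-2)^{-1}$ either. In short, the specific constant in \eqref{eq:appro_v-a} needs a sharper, interval-specific Jackson-type inequality than anything you invoke; this is exactly the step that has to be taken from (and checked in) \cite{Dong-Mascotto-Wang:2024} rather than reconstructed from \cite[Theorem~7.2]{Trefethen-2019}.
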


\section{\emph{A posteriori} error upper bounds}
\label{sec:apost-Linfty-L2}
We henceforth assume that
\begin{equation}\label{eq:regularity_needed}
    \Delta u \in L^1(J;L^2(\Dom)).
\end{equation}
This implies that~$v\in \calC^0(J;H^1_0(\Dom)\cap H^2(\Dom))\cap \calC^1(J;L^2(\Dom))$
owing to the domain convexity assumption, with $v$ defined in \eqref{eq:v}.
Sufficient conditions for the validity
of~\eqref{eq:regularity_needed}
are
$u_0 \in H^2(\Dom)\cap H^1_0(\Dom)$,
$u_1 \in H^1(\Dom)$,
and $f \in H^1(J; L^2(\Dom))$,
following the argument in \cite[\revb{Section 7.2,} Theorem 5]{Evans:2022}
combined with the elliptic regularity result $H^2(\Dom)$
on convex domains in~\cite{Grisvard:2011}.

We introduce
\begin{equation}\label{eq:error_decomp}
    e \eqq u-U = (u-\RU) + (\RU-U)
    =: \rho_1 + \rho_2.
\end{equation}
To prove an upper bound
of~$\norm{u-U}_{L^\infty(L^2)}$,
we estimate from above the corresponding norms
of~$\rho_1$ and~$\rho_2$ separately.

We will frequently use the Kronecker delta function defined as $\delta_{i,j}=1$ if $i=j$, and $\delta_{i,j}=0$ if $i\neq j$. We also recall that $\Pi_p^{\bot}$ and $\Pi_{\bq}^{\bot}$ are the orthogonal complement projectors in space and time, respectively, and $\Tilde{\dt}_n = 2\dt_n+\dt_{n-1} + (\xi-t_n) |\dt_{n-1}/\dt_n-1|$. Also, given~$m$ as in~\eqref{eq:xi}, we define
\begin{equation}\label{eq:c4}
c_5(n)\eqq
\begin{cases}
    1                       &\text{if } n=m\\
    \frac{t_m-t_{n-1}}{\dt_n}       &\text{if } q_n=2, n<m; \\
    \frac{c_4(q_n-3)}{\pi}  &\text{if } q_n>2, n<m,
\end{cases}
\qquad\text{and}\qquad
    \Tilde{c}_5(n) \eqq
    \begin{cases}
        \frac{c_4(q_n-1)}{\pi} \quad        &\text{if } n < m; \\
        1 \quad     &\text{if } n=m.
    \end{cases}
\end{equation}
For convex $\Dom$, the classical Poincar\'e-Steklov inequality 
states that, for
$C_{PS}\eqq \text{diam}(\Dom)/\pi$, we have
\begin{equation} \label{eq:Poincare-ineq}
\norm{w}
\le C_{PS} \norm{\nabla w}
\qquad\qquad\text{for all } w \in H^1_0(\Dom).
\end{equation}
We prove bounds for the $L^\infty(L^2)$ norms of~$\rho_1$ and~$\rho_2$
in~\eqref{eq:error_decomp}. We begin with~$\rho_1$.

\begin{lemma}[Upper bound on $\norm{\rho_1}_{L^\infty(L^2)}$] \label{lemma:bound-rho1}
Let~$u$ and~$U$ the solutions to~\eqref{eq:weak-formulation}
and~\eqref{eq:Walkington_scheme}
with initial conditions $u_0$ and~$u_1$,
and $u_{0,h}$ and $u_{1,h}$, respectively.
For  $c_1$, $c_2$, $c_5$, $\tilde c_5$,
 $m$,
as in
\eqref{c1_c2}
\eqref{eq:c4}, and
\eqref{eq:xi}, respectively,
we have
\begin{align*}
    \norm{\rho_1}_{L^\infty(L^2)} &\leq \sqrt{2}\norm{u_0-u_{0,h}} +  \sqrt{2}C_{PS} \norm{u_1-u_{1,h}} + \sqrt{2}C_{PS} \calE(V_h^0,u_{1,h},g'(t_0^-)) \\
    &\quad + 2\calE(V_h^1,U(t_0^+),g(t_0^+)) + \dt_m^2\norm{[g]_m} + \calE(V_h^{m+1,\ominus},[U]_m,[g]_m)\\
    &\quad +  2\sum_{n=1}^m \Big[
    \dt_n\Tilde{c}_5(n) \norm{ \Pi_{\bq-1}^{\bot}f}_{L^1(I_n;L^2)} + \norm{[ U ]_{n-1}}
    + \int_{I_n}\calE(V_h^n,U'(t),g'(t))\,{\rm d}t \\
    &\qquad\qquad + \dt_n c_1(q_n) \calE(V_h^{n,\ominus},[U']_{n-1},[g']_{n-1}) + \calE(V_h^{n,\ominus},[U]_{n-1},[g]_{n-1}) \\
    &\qquad\qquad + \frac{\dt_n\Tilde{\dt}_n }{4} \norm{[g]_{n-1}} + \dt_n^3
    c_2(q_n) c_5(n) \norm{[g']_{n-1}} + \dt_n\Tilde{c}_5(n) \norm{\Pi_{\bq-1}^{\bot}\Delta_h U}_{L^1(I_n;L^2)} \Big].
\end{align*}
\end{lemma}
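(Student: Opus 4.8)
The plan is to run Baker's duality-type argument (as in~\cite{Baker:1976} and~\cite{Dong-Mascotto-Wang:2024}) with the auxiliary function $v$ of~\eqref{eq:v}. By well-posedness $u\in\calC^0(J;H^1_0(\Dom))\cap\calC^1(J;L^2(\Dom))$ and, by~\eqref{eq:regularity_needed}, $u''=\Delta u+f\in L^1(J;L^2(\Dom))$; moreover $\RU\in\calC^1(J;H^1_0(\Dom)\cap H^2(\Dom))$ by Lemmas~\ref{lemma:smoothness-RtU} and~\ref{lemma:DGtoCG-smoothness}, so $\rho_1=u-\RU$ satisfies pointwise a.e. $\rho_1''-\Delta\rho_1=\Psi$ with residual $\Psi\eqq f-(\RU)''+\Delta\RU\in L^1(J;L^2(\Dom))$. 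Using $v|_{(0,\xi]}$ (extended by $0$) as test function in~\eqref{eq:weak-formulation} on $(0,\xi)$ and integrating $-\Delta\RU$ by parts in space gives $\int_0^\xi[(\rho_1'',v)+(\nabla\rho_1,\nabla v)]\,{\rm d}t=\int_0^\xi(\Psi,v)\,{\rm d}t$; two integrations by parts in time, together with $v(\xi)=0$, $v'=-\rho_1$ and~\eqref{eq:xi}, yield
\begin{equation*}
\tfrac12\norm{\rho_1}_{L^\infty(L^2)}^2+\tfrac12\norm{\nabla v(0)}^2=\tfrac12\norm{\rho_1(0)}^2+(\rho_1'(0),v(0))+\int_0^\xi(\Psi,v)\,{\rm d}t.
\end{equation*}
Every summand of $\int_0^\xi(\Psi,v)\,{\rm d}t$ will be bounded by $\norm{v'}_{L^\infty(\,\cdot\,;L^2)}=\norm{\rho_1}_{L^\infty(L^2)}$ times a computable quantity, the term $\tfrac12\norm{\nabla v(0)}^2$ will be absorbed via Young's inequality and~\eqref{eq:Poincare-ineq} on $(\rho_1'(0),v(0))$ (up to a cancellation with a boundary term produced below), and the resulting inequality $\norm{\rho_1}_{L^\infty(L^2)}^2\le b^2+2\norm{\rho_1}_{L^\infty(L^2)}S$ gives $\norm{\rho_1}_{L^\infty(L^2)}\le b+2S$.

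For the initial data, $V_h^0=V_h^1$ (so $\calP_p^1$ is the identity on $V_h^1$) forces $U(t_0^+)=u_{0,h}$, while the node relations~\eqref{eq:right_continuity_dtRtU} and~\eqref{eq:I_cfrak} give $\RU(0)=\RxU(t_0^+)$ and $(\RU)'(0)=(\RxU)'(t_0^-)$; hence $\rho_1(0)=(u_0-u_{0,h})+(U-\RxU)(t_0^+)$ and $\rho_1'(0)=(u_1-u_{1,h})+(U-\RxU)'(t_0^-)$, the spatial parts being controlled by~\eqref{eq:bound_RxU-c} through $\calE(V_h^1,U(t_0^+),g(t_0^+))$ and $\calE(V_h^0,u_{1,h},g'(t_0^-))$; expanding the squares then produces the $\sqrt2$-factors. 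For the bulk term I would use $\RU=\calI_{\mathfrak c}(\widehat{\RxU})$, the commutation of $\calI_{\mathfrak c}$ and the time reconstruction with $\Delta$ (they act in time only), so that $-\Delta\RU=\calI_{\mathfrak c}(\widehat g)$ with $\widehat g\eqq-\Delta\widehat{\RxU}$, together with the splittings $(\RU)''=(\widehat{\RxU})''+((\calI_{\mathfrak c}-I_d)(\widehat{\RxU}))''$ and $\calI_{\mathfrak c}(\widehat g)=\widehat g-(I_d-\calI_{\mathfrak c})(\widehat g)$, to write $\Psi=\Psi_1+\Psi_2+\Psi_3$ with $\Psi_1\eqq f-(\widehat{\RxU})''+\Delta\widehat{\RxU}$, $\Psi_2\eqq[(I_d-\calI_{\mathfrak c})(\widehat{\RxU})]''$, $\Psi_3\eqq(I_d-\calI_{\mathfrak c})(\widehat g)$. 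Here $\int_0^\xi(\Psi_3,v)\,{\rm d}t$ is handled directly by Lemma~\ref{lemma:DGtoCG-approx} (noting $[\widehat g]_n=[g]_n$ by~\eqref{eq:right_continuity_dtRtU}), giving the $\tfrac{\dt_n\Tilde{\dt}_n}{4}\norm{[g]_{n-1}}$ and $\dt_m^2\norm{[g]_m}$ contributions; and since $\calI_{\mathfrak c}$ preserves the temporal derivative at every node~\eqref{eq:I_cfrak}, integrating $\Psi_2$ by parts on each $I_n$ kills all boundary terms, leaving $\int_0^\xi(((I_d-\calI_{\mathfrak c})(\widehat{\RxU}))',\rho_1)\,{\rm d}t$, bounded via~\eqref{eq:bound_I_Lfrak-b} and $\norm{[\RxU]_{n-1}}\le\norm{[U]_{n-1}}+\calE(V_h^{n,\ominus},[U]_{n-1},[g]_{n-1})$ (from~\eqref{eq:bound_time_jump_U_RxU}), yielding the $\norm{[U]_{n-1}}$ and jump-$\calE$ contributions (and the $\xi$-interval terms $\calE(V_h^{m+1,\ominus},[U]_m,[g]_m)$).

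The core is $\int_0^\xi(\Psi_1,v)\,{\rm d}t$. Introducing the time reconstruction $\RtU$ of $U$, one combines the scheme~\eqref{eq:Walkington_scheme} tested against $\mathbb{P}_{q_n-1}(I_n;V_h^n)$ with~\eqref{eq:RtV}, the Galerkin orthogonality $(\nabla(U-\RxU),\nabla W)=0$ on $V_h^n$, and the definition~\eqref{eq:RxU} of $g$ to obtain, after cancellations, $\Psi_1=(\Pi_{\bq-1}^{\bot}f+\Pi_{\bq-1}^{\bot}\Delta_h U-2\Pi_p^{\bot}\RtU'')+(\RtU-\widehat{\RxU})''+(g-\widehat g)$, where $(\RtU-\widehat{\RxU})'=\RtU'-\widehat{\RxU}'\in\calC^0(J;L^2(\Dom))$ by~\eqref{eq:right_continuity_dtRtU}. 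Testing against $v$: the two $\Pi_{\bq-1}^{\bot}$ pieces are $L^2(I_n)$-orthogonal to $\mathbb{P}_{q_n-1}$, so they integrate against $v-\Pi_{q_n-1}^\infty v$, and Lemma~\ref{lemma:approx-w} (with~\eqref{eq:appro_v-b} at $n=m$) gives the $\dt_n\Tilde{c}_5(n)\norm{\Pi_{\bq-1}^{\bot}f}_{L^1(I_n;L^2)}$ and $\dt_n\Tilde{c}_5(n)\norm{\Pi_{\bq-1}^{\bot}\Delta_h U}_{L^1(I_n;L^2)}$ contributions; integrating $(\RtU-\widehat{\RxU})''$ by parts in time (using $v(\xi)=0$) produces the boundary term $-((U-\RxU)'(t_0^-),v(0))$, which cancels the $(U-\RxU)'(t_0^-)$-part of $(\rho_1'(0),v(0))$ above (so that only $(u_1-u_{1,h},v(0))\le\tfrac12C_{PS}^2\norm{u_1-u_{1,h}}^2+\tfrac12\norm{\nabla v(0)}^2$ survives), plus the bulk $\int_0^\xi((\RtU-\widehat{\RxU})',\rho_1)\,{\rm d}t$; writing $(\RtU-\widehat{\RxU})'=(U-\RxU)'+[(\RtU-\widehat{\RxU})'-(U-\RxU)']$, the first summand gives $\int_{I_n}\calE(V_h^n,U'(t),g'(t))\,{\rm d}t$ by~\eqref{eq:bound_RxU} with $l=1$, and the second, of $L^2(I_n;L^2)$-norm $\dt_n^{1/2}c_1(q_n)\norm{[(U-\RxU)']_{n-1}}$ by~\eqref{eq:bound_RtU-a}, paired with $\rho_1$ and~\eqref{eq:bound_time_jump_U_RxU} ($l=1$) gives $\dt_n c_1(q_n)\calE(V_h^{n,\ominus},[U']_{n-1},[g']_{n-1})$; finally $g-\widehat g$, being $L^2$-orthogonal to $\mathbb{P}_{q_n-3}$ by~\eqref{eq:ortho_RtU}, tested against $v$ and bounded via~\eqref{eq:bound_RtU-c} and Lemma~\ref{lemma:approx-w} of order $q_n-3$ gives $\dt_n^3 c_2(q_n)c_5(n)\norm{[g']_{n-1}}$; the residual $\Pi_p^{\bot}\RtU''$-contribution and the $n=1$ endpoint are absorbed, via Lemma~\ref{lemma:bound_mesh_change_dtRtU} and~\eqref{eq:bound_time_jump_U_RxU-c}--\eqref{eq:bound_time_jump_U_RxU-d}, into the $c_3$-summands of the $\calE$ estimators already listed. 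Collecting all contributions, dividing through by $\norm{\rho_1}_{L^\infty(L^2)}$, and estimating the leftover initial quadratic terms as above yields the asserted bound.

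The main obstacle throughout is the identification and estimation of $\Psi_1$: one must thread the mesh-change defect $\Pi_p^{\bot}\RtU''$ through the elliptic reconstruction~\eqref{eq:RxU} and the two temporal reconstructions, exploit the $\calC^0$-regularity of $(\RtU-\widehat{\RxU})'$ so that the boundary contribution at $t=0$ cancels the initial-velocity defect, and match each temporal orthogonality order to the correct constant among $c_1,\dots,c_5$ — all while keeping $C_{PS}$ and the constant $c_{L^2}$ inside $\calE$ explicit.
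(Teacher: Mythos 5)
Your overall strategy coincides with the paper's: Baker's test function $v$, the energy identity
$\tfrac12\norm{\rho_1(\xi)}^2+\tfrac12\norm{\nabla v(0)}^2=\tfrac12\norm{\rho_1(0)}^2+(\rho_1'(0),v(0))+\int_0^\xi(\Psi,v)\,{\rm d}t$,
and the same constituent lemmas applied to what is, after regrouping, the same residual decomposition: your $\Psi_3$ is the paper's $\Delta(\RU-\widehat{\RxU})$ contribution handled by \eqref{eq:bound_I_Lfrak_accumulated}; your $\Psi_2$ is the $(\widehat{\RxU}-\RU)''$ part of the paper's $T_4$; your $\Psi_1$ collects the paper's $T_3$, $T_6$, the $(\RtU-\widehat{\RxU})''$ part of $T_4$, and the $g-\widehat g$ term of $T_5$, each estimated exactly as in the paper. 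Your exact cancellation of the boundary term $((U-\RxU)'(t_0^-),v(0))$ against the corresponding part of $(\rho_1'(0),v(0))$ is a nice touch the paper does not exploit (it bounds both occurrences separately via Young's inequality and \eqref{eq:Poincare-ineq}); it yields a marginally stronger bound that still implies the stated one.

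The one genuine gap is your disposal of the leftover $-2\Pi_p^{\bot}\RtU''$ in $\Psi_1$. The claim that $\int_0^\xi(\Pi_p^{\bot}\RtU'',v)\,{\rm d}t$ is ``absorbed into the $c_3$-summands of the $\calE$ estimators already listed'' does not hold: those summands arise from Lemma~\ref{lemma:bound_mesh_change_dtRtU} applied \emph{inside} the elliptic-reconstruction bounds of Lemma~\ref{lemma:computable-bounds-spatial}, and therefore carry the weight $h_n^2/p^2$; a direct pairing of $\Pi_p^{\bot}\RtU''$ with $v$ produces an unweighted $\norm{\Pi_p^{n,\bot}U'(t_{n-1}^-)}$-type contribution which appears nowhere in the stated bound and cannot be hidden in the listed terms (approximating $v$ in $V_h^n$ to recover a weight would require spatial regularity of $v$ that is not controlled by $\norm{\rho_1}_{L^\infty(L^2)}$). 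The intended mechanism is that no standalone $\Pi_p^{\bot}\RtU''$ survives at all: the elliptic reconstruction is built precisely so that this defect cancels in the pointwise residual identity, i.e.\ so that $\RtU''-\Delta\RxU=\Pi_{\bq-1}f-\Pi_{\bq-1}^{\bot}\Delta_h U$ holds exactly, which requires $g=-\Delta_h U-(\Pi_{p,\bq-1}-\Pi_{\bq-1})f-\Pi_p^{\bot}\RtU''$ (note the sign of the last term relative to the printed \eqref{eq:RxU}; your algebra faithfully follows the printed sign and that is exactly where your extra term comes from). With the cancellation in place the residual reduces to the paper's $T_3+T_4+T_5+T_6$, the mesh-change contributions enter only through the $h^2/p^2$-weighted $c_3$-terms inside $\calE$, and the rest of your argument goes through. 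You should either flip that sign (flagging the discrepancy) or carry the extra term explicitly --- in which case the bound you obtain is not the one stated.
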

\begin{proof}
For each $I_n$, $n\in \calN$,
using~\eqref{eq:RtV}, we can
rewrite~\eqref{eq:Walkington_scheme} in pointwise strong form as
\begin{equation*}
  (\Pi_p \RtU)'' - \Pi_{\bq-1}\Delta_h U =\Pi_{p,\bq-1} f .
\end{equation*}
Employing~\eqref{eq:RxU}, \eqref{eq:RxU_strong},
and~\eqref{eq:RU}, we find
\begin{equation*}
\RU'' - \Delta \RU
= \Pi_{\bq-1}f + (\RU-\RtU)'' + \Delta (\RxU-\RU) - \Pi_{\bq-1}^{\bot} \Delta_h U.
\end{equation*}
Subtracting the above identity from~\eqref{eq:model_strong}, we arrive at
\begin{equation*}
    \rho_1'' - \Delta \rho_1 = \Pi_{\bq-1}^{\bot}f + (\RtU-\RU)'' + \Delta (\RU-\RxU) + \Pi_{\bq-1}^{\bot} \Delta_h U.
\end{equation*}
Testing the last identity against~$v$ given in~\eqref{eq:v},
integrating in space and in the time interval $(0,\xi)$, with
$\xi$ as in~\eqref{eq:xi},
and, finally, integrating by parts in space on the left-hand side, we deduce the identity
\begin{equation*}
        \revb{\mathfrak T}_1+\revb{\mathfrak T}_2=\revb{\mathfrak T}_3+\revb{\mathfrak T}_4+\revb{\mathfrak T}_5+\revb{\mathfrak T}_6,
\end{equation*}
where
\begin{align*}
&\revb{\mathfrak T}_1 \eqq \int_0^\xi (\rho_1'',v)\,{\rm d}t,
\quad \revb{\mathfrak T}_2 \eqq \int_0^\xi (\nabla \rho_1, \nabla v)\,{\rm d}t,
\quad \revb{\mathfrak T}_3 \eqq  \int_0^\xi \big( \Pi_{\bq-1}^{\bot} f,v \big)\,{\rm d}t,\\
&\revb{\mathfrak T}_4 \eqq  \int_0^\xi \big( (\RtU-\RU)'',v \big)\,{\rm d}t, \quad \revb{\mathfrak T}_5
    \eqq \int_0^\xi \big( \Delta (\RU-\RxU), v \big)\,{\rm d}t,
    \quad \revb{\mathfrak T}_6 \eqq \int_0^\xi \big( \Pi_{\bq-1}^{\bot} \Delta_h U, v \big)\,{\rm d}t.
\end{align*}
We estimate~$\revb{\mathfrak T}_1 + \revb{\mathfrak T}_2$ from below,
and $\revb{\mathfrak T}_3,\dots,\revb{\mathfrak T}_6$ from above,
and eventually collect the bounds.

\paragraph{Lower bound on $\revb{\mathfrak T}_1$ + $\revb{\mathfrak T}_2$.}
We will use the trivial identity \revb{$( f_1'',f_2 )
= - ( f_1',f_2' )
+ ( f_1',f_2 )'$} for all sufficiently smooth functions $f_1$ and $f_2$.
Since $v\in \calC^0(J;H^1_0(\Dom))\cap \calC^1(J;L^2(\Dom))$
and $\RU\in \calC^1(J;H^1_0(\Dom))$,
the above identity with $v(\xi)=0$, \revb{$v'=-\rho_1$} and Young's inequality imply
\begin{align*}
\revb{\mathfrak T}_1
&=
    \int_0^\xi(\rho_1',\rho_1){\rm d}t + \int_0^\xi (\rho_1',v)'{\rm d}t
  = \frac{1}{2}\norm{\rho_1(\xi)}^2-\frac{1}{2}\norm{\rho_1(t_0^+)}^2-(\rho_1'(t_0^+),v(t_0^+))\\
& \overset{\eqref{eq:Poincare-ineq}}{\geq}
 \frac{1}{2}\norm{\rho_1(\xi)}^2-\frac{1}{2}\norm{\rho_1(t_0^+)}^2 - C_{PS}^2 \norm{\rho_1'(t_0^+)}^2 -\frac{1}{4}\norm{\nabla v(t_0^+)}^2.
\end{align*}
Definition~\eqref{eq:v} implies $v' = -\rho_1$ and $v(\xi)=0$, whence we infer that
\begin{align*}
\revb{\mathfrak T}_2 &= -\int_0^\xi (\nabla v',\nabla v){\rm d}t = \frac{1}{2}\norm{\nabla v(t_0^+)}^2.
\end{align*}
Since $\rho_1$ is continuously differentiable in $t$ over~$J$, we observe that
\begin{align*}
    &\norm{\rho_1(t_0^+)} \overset{\eqref{eq:I_cfrak}}{=} \norm{u_0 - \widehat{\RxU}(t_0^+)} \overset{\eqref{eq:RtV}}{=} \norm{u_0 - \RxU(t_0^+)} \leq \norm{u_0 - u_{0,h}} + \norm{[U]_0} + \norm{(U-\RxU)(t_0^+)},\\
    &\norm{\rho_1'(t_0^+)} \overset{\eqref{eq:I_cfrak}}{=} \norm{u_1 - \widehat{\RxU}'(t_0^+)} \overset{\eqref{eq:RtV}}{=} \norm{u_1 - \RxU'(t_0^-)} \leq \norm{u_1-u_{1,h}} + \norm{ (U -\RxU)'(t_0^-) }.
\end{align*}
Combining the three displays above with $[U]_0=0$, \eqref{eq:bound_RxU_calE} and \eqref{eq:xi} implies
\begin{equation*}
\begin{split}
\revb{\mathfrak T}_1+\revb{\mathfrak T}_2
& \geq \frac{1}{2}\norm{\rho_1}^2_{L^\infty(L^2)}
    - \norm{u_0-u_{0,h}}^2 - \calE(V_h^1,U(t_0^+),g(t_0^+))^2 - C_{PS}^2 \Big( \norm{u_1-u_{1,h}}^2 +  \calE(V_h^0,u_{1,h},g'(t_0^-))^2 \Big).
\end{split}
\end{equation*}

\paragraph{Upper bound on $\revb{\mathfrak T}_3$.}
Set $w=\Pi_{q-1}^\infty v$ as in \eqref{eq:appro_v} to get
\begin{align*}
\revb{\mathfrak T}_3
& = \sum_{n=1}^{m-1} \int_{I_n} \big( \Pi_{\bq-1}^{\bot}f, v \big){\rm d}t
        + \int_{t_{m-1}}^\xi  \big( \Pi_{\bq-1}^{\bot}f, v \big){\rm d}t = \sum_{n=1}^{m-1} \int_{I_n}
        \big( \Pi_{\bq-1}^{\bot}f, v - w \big){\rm d}t
    + \int_{t_{m-1}}^\xi \big( \Pi_{\bq-1}^{\bot}f, v \big){\rm d}t.
\end{align*}
For $1\leq n \leq m-1$, we have
\begin{align*}
\int_{I_n} \big( \Pi_{\bq-1}^{\bot}f, v - w \big){\rm d}t
& \overset{\eqref{eq:appro_v-a}}{\leq}
    \frac{\dt_n}{\pi}c_4(q_n-1)\norm{ \Pi_{\bq-1}^{\bot}f}_{L^1(I_n;L^2)}\norm{v'}_{L^\infty(I_n;L^2)}\\
& \leq \frac{\dt_n}{\pi}c_4(q_n-1)\norm{ \Pi_{\bq-1}^{\bot}f}_{L^1(I_n;L^2)}\norm{v'}_{L^\infty(L^2)},
\end{align*}
and, for $n=m$,
\begin{align*}
\int_{t_{m-1}}^\xi  \big( \Pi_{\bq-1}^{\bot}f, v \big){\rm d}t
& \overset{\eqref{eq:appro_v-b}}{\leq}
    \norm{ \Pi_{\bq-1}^{\bot}f}_{L^1(I_m;L^2)}\dt_m\norm{v'}_{L^\infty(I_m;L^2)} \leq \norm{ \Pi_{\bq-1}^{\bot}f}_{L^1(I_m;L^2)}\dt_m\norm{v'}_{L^\infty(L^2)}.
\end{align*}
Combining, we find
\begin{align*}
\revb{\mathfrak T}_3
\leq \sum_{n=1}^{m} \Big( \dt_n\Tilde{c}_5(n) \norm{ \Pi_{\bq-1}^{\bot}f}_{L^1(I_n;L^2)} \Big) \norm{\rho_1}_{L^\infty(L^2)}.
\end{align*}

\paragraph{Upper bound on $\revb{\mathfrak T}_4$.}
Using that $v(\xi) \overset{\eqref{eq:v}}{=}0$,
$\RtU'(t_0^+)
\overset{\eqref{eq:RtV}}{=} u_{1,h}$,
$\Delta_h^0u_{1,h}
= \Delta \RxU'(t_0^-) \overset{\eqref{eq:RU}}{=} \Delta \RU'(t_0^+) $,
and the fact that $\RtU'$ and $\RU'$
belong to $\calC^0(J;L^2(\Dom))$, we write
\begin{align*}
\revb{\mathfrak T}_4
& = \int_0^\xi \big( (\RtU-\RU)',v \big)'{\rm d}t + \int_0^\xi\big( (\RtU-\RU)',\rho_1 \big){\rm d}t \\
&=  \int_0^\xi\big( (\RtU-\RU)',\rho_1 \big){\rm d}t +( (u_{1,h}-\Delta^{-1}\Delta_h^0u_{1,h}), v(t_0^-) ) \\
&\leq \int_0^\xi\big( (\RtU-\RU)',\rho_1 \big){\rm d}t
+ C_{PS}^2\calE(V_h^0,U'(t_0^-),g'(t_0^-) )^2 + \frac{1}{4}\norm{\nabla v(t_0^+)}^2,
\end{align*}
\revb{where $\Delta^{-1}:L^2(\Dom) \to H^1_0(\Dom)$
denotes the inverse of the Laplacian.}
The first term on the right-hand side can be rewritten as
\begin{align*}
\int_0^\xi\big( (\RtU-\RU)',\rho_1 \big){\rm d}t = \int_0^\xi \big( (\RtU-\widehat{\RxU})', \rho_1   \big){\rm d}t
    + \int_0^\xi \big( (\widehat{\RxU}-\RU)', \rho_1   \big){\rm d}t
    =: \revb{\mathfrak T}_{4,1} + \revb{\mathfrak T}_{4,2}.
\end{align*}
As for $\revb{\mathfrak T}_{4,1}$, we have
\begin{align*}
    \revb{\mathfrak T}_{4,1}\leq \norm{\rho_1}_{L^\infty(L^2)} \sum_{n=1}^m \norm{(\RtU-\widehat{\RxU})'}_{L^1(I_n;L^2)}.
\end{align*}
For all $n\in\{1,\dots,m\}$, we write
\begin{align*}
\norm{(\RtU-\widehat{\RxU})'}_{L^1(I_n;L^2)}
&   \overset{\eqref{eq:bound_RtU-a}}{\le}
    \norm{(U-\RxU)'}_{L^1(I_n;L^2)}
    + \dt_n c_1(q_n)\norm{[(U-\RxU)']_{n-1}}\\
&\overset{\eqref{eq:bound_RxU_calE}, \eqref{eq:bound_jump_RxU_calE}}{\leq} \int_{I_n}\calE(V_h^n,U'(t),g'(t)){\rm d}t + \dt_n c_1(q_n) \calE(V_h^{n,\ominus},[U']_{n-1},[g']_{n-1}).
\end{align*}
As for~$\revb{\mathfrak T}_{4,2}$, we have
$
    \revb{\mathfrak T}_{4,2} \leq \norm{\rho_1}_{L^\infty(L^2)} \sum_{n=1}^m \norm{(\widehat{\RxU}-\RU)'}_{L^1(I_n;L^2)}.
$
We arrive at
\begin{align*}
\sum_{n=1}^m \norm{(\widehat{\RxU}-\RU)'}_{L^1(I_n;L^2)}
&\overset{\eqref{eq:bound_I_Lfrak-b}}{\leq}
\revb{\sum_{n=1}^m} \norm{[ \widehat{\RxU} ]_{n-1}}
    +\frac{1}{2}\norm{[ \widehat{\RxU} ]_{m}}
\overset{\eqref{eq:RtV},\eqref{eq:right_continuity_dtRtU}}{=}
\sum_{n=1}^m\norm{[ \RxU ]_{n-1}} + \frac{1}{2}\norm{[ \RxU ]_{m}}\\
&\leq  \sum_{n=1}^m(\norm{[ U ]_{n-1}} + \norm{[ \RxU-U ]_{n-1}})
    + \frac{1}{2}(\norm{[ U ]_{m}} + \norm{[ \RxU-U ]_{m}}) \\
& \leq \revb{\sum_{n=1}^m(\norm{[ U ]_{n-1}} + \calE(V_h^{n,\ominus},[U]_{n-1},[g]_{n-1})
    + \frac{1}{2}(\norm{[ U ]_{m}} + \calE(V_h^{m+1,\ominus},[U]_{m},[g]_{m})}.
\end{align*}
Finally, combining the above displays, \revb{and invoking estimate \eqref{eq:bound_jump_RxU_calE},}
we get
\begin{align*}
\revb{\mathfrak T}_4
& \leq \norm{\rho_1}_{L^\infty(L^2)} \sum_{n=1}^m \Big[ \int_{I_n}\calE(V_h^n,U'(t),g'(t)){\rm d}t  + \dt_n c_1(q_n) \calE(V_h^{n,\ominus},[U']_{n-1},[g']_{n-1})\\
&\hspace{3cm} + \norm{[ U ]_{n-1}} + \calE(V_h^{n,\ominus},[U]_{n-1},[g]_{n-1}) \revb{\Big]}
    + \frac{1}{2}\norm{[ U ]_{m}} + \frac{1}{2}\calE(V_h^{m+1,\ominus},[U]_{m},[g]_{m}) .
\end{align*}

\paragraph*{Upper bound on $\revb{\mathfrak T}_5$.}
We split~$\revb{\mathfrak T}_5$ as
\begin{equation*}
\revb{\mathfrak T}_5
= \int_0^\xi \big( \Delta(\RU-\widehat{\RxU}) , v \big)
    + \int_0^\xi \big( \Delta(\widehat{\RxU}-\RtU) , v \big) =: \revb{\mathfrak T}_{5,1} + \revb{\mathfrak T}_{5,2}.
\end{equation*}
As for $\revb{\mathfrak T}_{5,1}$, the crucial bound \eqref{eq:bound_I_Lfrak_accumulated}, \eqref{eq:right_continuity_dtRtU}, and \eqref{eq:RxU_strong} give
\begin{align*}
\revb{\mathfrak T}_{5,1}
\leq \norm{\rho_1}_{L^\infty(L^2)} \sum_{n=1}^{m} \Big(  \frac{\dt_n\Tilde{\dt}_n}{4} \norm{[g]_{n-1}}\Big)  + \frac{\dt_m^2}{4}\norm{[g]_m}.
\end{align*}
For $\revb{\mathfrak T}_{5,2}$, we distinguish two cases.
For $q_n>2$, using that $v|_{I_n}\in\calC^0(I_n;H^2(\Dom)\cap H^1_0(\Dom))$
for all~$n\in1,...,m-1$,
invoking the orthogonality \eqref{eq:ortho_RtU}
with $w(t) = \int_{t_n}^t\int_{t_n}^s  (\Delta \Pi_{q_n-3}^\infty v)(r){\rm d}r{\rm d}s$
in $\mathbb{P}_{q_n-1}(I_n;L^2(\Dom))$,
we have
\begin{equation*}
\begin{split}
\int_{I_n}(\Delta(\RxU-\widehat{\RxU}),\Pi_{q_n-3}^\infty v){\rm d}t
    =  \int_{I_n} (\RxU-\widehat{\RxU},\Delta \Pi_{q_n-3}^\infty v){\rm d}t
    =  \int_{I_n} (\RxU-\widehat{\RxU},w''){\rm d}t
    \overset{\eqref{eq:ortho_RtU}}= 0 .
\end{split}
\end{equation*}
Thus,
\[
        \revb{\mathfrak T}_{5,2}
        =\sum_{n=1}^{m-1} \int_{I_n} \big( \Delta (\widehat{\RxU}-\RxU), v-\Pi_{q_n-3}^\infty v \big){\rm d}t
        + \int_{t_{m-1}}^\xi \big( \Delta (\widehat{\RxU}-\RxU), v \big){\rm d}t.
\]
The first terms on the right-hand side can be estimated from above as follows:
\begin{align*}
& \int_{I_n} \big( \Delta (\widehat{\RxU}-\RxU),
        v-\Pi_{q_n-3}^\infty v  \big){\rm d}t
    \overset{\eqref{eq:appro_v-a}}{\leq}
        \frac{\dt_n^\frac{3}{2}}{\pi} c_4(q_n-3)
        \norm{\Delta (\widehat{\RxU}-\RxU)}_{L^2(I_n;L^2)}
        \norm{\rho_1}_{L^\infty(L^2)}\\
& \overset{\eqref{eq:bound_RtU-b}}{\revb{\leq}}
    \frac{\dt_n^3}{\pi}c_2(q_n) c_4(q_n-3)
        \norm{[\Delta\RxU']_{n-1}}
        \norm{\rho_1}_{L^\infty(L^2)}
    \overset{\eqref{eq:RxU_strong}}{=} \frac{\dt_n^3}{\pi}c_2(q_n)
    c_4(q_n-3) \norm{[g']_{n-1}}\norm{\rho_1}_{L^\infty(L^2)}.
\end{align*}
For $q_n=2$,
we have $\norm{v}_{L^\infty(I_n;L^2)}\leq (t_m-t_{n-1})\norm{\rho_1}_{L^\infty(L^2)}$,
whence we infer
\small\begin{align*}
    \int_{I_n} \big( \Delta (\widehat{\RxU}-\RxU), v \big){\rm d}t
    &\overset{\eqref{eq:appro_v-a},\eqref{eq:bound_RtU}}{\leq} (t_m-t_{n-1}) \dt_n^2c_2(q_n)\norm{[\Delta\RxU']_{n-1}}\norm{\rho_1}_{L^\infty(L^2)}\\
    &\overset{\eqref{eq:RxU_strong}}{=} (t_m-t_{n-1}) \dt_n^2c_2(q_n)
    \norm{[g']_{n-1}}\norm{\rho_1}_{L^\infty(L^2)}.
\end{align*}\normalsize
The second term on the right-hand side is controlled by
\begin{equation*}
    \int_{t_{m-1}}^\xi \big( \Delta (\widehat{\RxU}-\RxU), v \big){\rm d}t
    \overset{\eqref{eq:appro_v-b},\eqref{eq:RxU_strong},\revb{\eqref{eq:bound_RtU-c}} }{\leq}
     \dt_m^3c_2(q_m) \norm{[g']_{m-1}} \norm{\rho_1}_{L^\infty(L^2)}.
\end{equation*}
Collecting the above bounds and recalling the definition
of~$c_5(n)$ in~\eqref{eq:c4}, we find
\begin{align*}
\revb{\mathfrak T}_{5,2}
& \leq \Big( \sum_{n=1}^m \dt_n^3
    c_2(q_n) c_5(n) \norm{[g']_{n-1}}
    \Big)
        \norm{\rho_1}_{L^\infty(L^2)}.
\end{align*}
Therefore, the above bounds collect into
\begin{align*}
\revb{\mathfrak T}_5
&\leq \norm{\rho_1}_{L^\infty(L^2)} \sum_{n=1}^m \Big[ \frac{\dt_n\Tilde{\dt}_n}{4} \norm{[g]_{n-1}} + \dt_n^3
    c_2(q_n) c_5(n) \norm{[g']_{n-1}} \Big]  + \frac{\dt_m^2}{4}\norm{[g]_m}.
\end{align*}

\paragraph*{Upper bound on $\revb{\mathfrak T}_6$.}
For $m$ as in~\eqref{eq:xi}
we set $w=\Pi_{q-1}^\infty v$ as in \eqref{eq:appro_v} and get
\begin{align*}
\revb{\mathfrak T}_6
&= \sum_{n=1}^{m-1} \int_{I_n}
    \big( \Pi_{\bq-1}^{\bot}\Delta_h U, v-w \big){\rm d}t
    + \int_{t_{m-1}}^\xi \big( \Pi_{\bq-1}^{\bot} \Delta_h U, v \big){\rm d}t\\
& \overset{\eqref{eq:appro_v}}{\leq}
    \norm{\rho_1}_{L^\infty(L^2)} \Big( \sum_{n=1}^{m}
    \dt_n \Tilde{c}_5(n) \norm{\Pi_{\bq-1}^{\bot}\Delta_h U}_{L^1(I_n;L^2)}
    \Big).
\end{align*}

\paragraph*{The final bound.}
Collecting the above displays,
we get the assertion by using
$\frac{1}{2}a^2\leq b + ac$
with $a \leq \sqrt{2b} + 2c$
for \revr{$a=\norm{\rho_1}_{L^\infty(L^2)}$,
$b=\norm{u_0-u_{0,h}}^2 + \calE(V_h^1,U(t_0^+),g(t_0^+))^2
+ C_{PS}^2 \Big( \norm{u_1-u_{1,h}}^2
+  \calE(V_h^0,u_{1,h},g'(t_0^-))^2 \Big)$
and $c$ collecting the upper bounds
for $\revb{\mathfrak T}_3$, \dots, $\revb{\mathfrak T}_6$}.
\end{proof}

Next, we derive an upper bound for~$\rho_2$ in~\eqref{eq:error_decomp}.
\begin{lemma}[Upper bound on $\norm{\rho_2}_{L^\infty(L^2)}$]\label{lemma:bound-rho2}
With the notation of Lemma \ref{lemma:bound-rho1},
we have
\begin{align*}
    \norm{\rho_2}_{L^\infty(L^2)} &\leq \max_{n\in\calN}  \norm{[U]_{n-1}} + \max_{n\in\calN} \calE(V_h^{n,\ominus},[U]_{n-1},[g]_{n-1}) + \max_{n\in\calN} ( c_1(q_n)c_2(q_n))^\frac{1}{2}\dt_n \calE(V_h^{n,\ominus},[U']_{n-1},[g']_{n-1})\\
    &\quad + \max_{n\in\calN} ( c_1(q_n)c_2(q_n))^\frac{1}{2}\dt_n \norm{[U']_{n-1}} + \max_{n\in\calN} \sup_{t\in I_n} \calE(V_h^n,U(t),g(t)).
\end{align*}
\end{lemma}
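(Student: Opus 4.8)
The plan is to insert the two intermediate reconstructions $\RxU$ and $\widehat{\RxU}$ between $U$ and $\RU$ and to estimate the three resulting contributions on each time interval separately, taking a maximum over $n$ only at the end. Since $\RU=\calI_{\mathfrak c}(\widehat{\RxU})$, I would write, for every $n\in\calN$ and every $t\in I_n$,
\[
\rho_2=\RU-U=\big(\calI_{\mathfrak c}(\widehat{\RxU})-\widehat{\RxU}\big)+\big(\widehat{\RxU}-\RxU\big)+\big(\RxU-U\big)=:\sigma_1+\sigma_2+\sigma_3,
\]
bound $\norm{\sigma_i}_{L^\infty(I_n;L^2)}$ for $i=1,2,3$, and conclude by the triangle inequality together with $\max_{n\in\calN}$. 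The three pieces are, respectively, the $\calI_{\mathfrak c}$-approximation error of $\widehat{\RxU}$, the time-reconstruction error of $\RxU$, and the elliptic reconstruction error of $U$.

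The term $\sigma_3$ is immediate: for $t\in I_n$ the Galerkin-orthogonality estimate \eqref{eq:bound_RxU_calE} gives $\norm{\sigma_3(t)}=\norm{(U-\RxU)(t)}\le\calE(V_h^n,U(t),g(t))$, so $\sup_{t\in I_n}\norm{\sigma_3(t)}\le\sup_{t\in I_n}\calE(V_h^n,U(t),g(t))$, and taking $\max_{n\in\calN}$ produces the last term of the statement. For $\sigma_1$, I would invoke \eqref{eq:bound_I_Lfrak-c} with $V=\widehat{\RxU}\in\mathbb{P}_{\bq+1}(J;L^2(\Dom))$, that is $\norm{\sigma_1}_{L^\infty(I_n;L^2)}\le\frac12\big(\norm{[\widehat{\RxU}]_{n-1}}+\norm{[\widehat{\RxU}]_n}\big)$. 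Since the defining relations in \eqref{eq:RtV} and Lemma~\ref{lemma:smoothness-RtU}, applied with $V=\RxU$, yield $\widehat{\RxU}(t_k^\pm)=\RxU(t_k^\pm)$ at every time node, one has $[\widehat{\RxU}]_k=[\RxU]_k$; then the triangle inequality and \eqref{eq:bound_jump_RxU_calE} give $\norm{[\RxU]_k}\le\norm{[U]_k}+\norm{[(U-\RxU)]_k}\le\norm{[U]_k}+\calE(V_h^{k+1,\ominus},[U]_k,[g]_k)$. Taking $\max_{n\in\calN}$ over the two consecutive indices appearing in $\sigma_1$ and using $[U]_N=0$, this is absorbed into the first two terms of the statement.

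The crux is $\sigma_2=\widehat{\RxU}-\RxU$, the time-reconstruction error of $\RxU$, for which I need an $L^\infty$-in-time bound carrying the sharp constant $\big(c_1(q_n)c_2(q_n)\big)^{1/2}\dt_n$. The key observation is that, by the defining conditions in \eqref{eq:RtV} together with Lemma~\ref{lemma:smoothness-RtU} applied to $V=\RxU\in\mathbb{P}_{q_n}(I_n;L^2(\Dom))$, the function $\sigma_2$ vanishes at both endpoints of $I_n$; hence, averaging the two representations $\norm{\sigma_2(t)}^2=\int_{t_{n-1}}^t 2(\sigma_2',\sigma_2)\,{\rm d}s=-\int_t^{t_n} 2(\sigma_2',\sigma_2)\,{\rm d}s$ and using the Cauchy--Schwarz inequality yields the interpolation bound $\norm{\sigma_2}_{L^\infty(I_n;L^2)}^2\le\norm{\sigma_2}_{L^2(I_n;L^2)}\,\norm{\sigma_2'}_{L^2(I_n;L^2)}$. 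Combining this with \eqref{eq:bound_RtU-a} and \eqref{eq:bound_RtU-c} (used with $V=\RxU$, so that $[V']_{n-1}=[\RxU']_{n-1}$) gives
\[
\norm{\sigma_2}_{L^\infty(I_n;L^2)}^2\le\big(\dt_n^3 c_2(q_n)^2\big)^{1/2}\big(\dt_n c_1(q_n)^2\big)^{1/2}\norm{[\RxU']_{n-1}}^2=\dt_n^2\,c_1(q_n)\,c_2(q_n)\,\norm{[\RxU']_{n-1}}^2,
\]
and finally $\norm{[\RxU']_{n-1}}\le\norm{[U']_{n-1}}+\norm{[(U-\RxU)']_{n-1}}\le\norm{[U']_{n-1}}+\calE(V_h^{n,\ominus},[U']_{n-1},[g']_{n-1})$ by the time-derivative analogue of \eqref{eq:bound_jump_RxU_calE}; this produces the third and fourth terms of the statement. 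I expect this $\sigma_2$-step to be the main obstacle: one must spot the endpoint vanishing of $\RxU-\widehat{\RxU}$ in order to upgrade the two $L^2$-in-time estimates \eqref{eq:bound_RtU-a} and \eqref{eq:bound_RtU-c} to an $L^\infty$-in-time estimate with the geometric-mean constant. The remaining steps are routine triangle-inequality bookkeeping built on the already-proven elliptic-reconstruction bounds of Lemma~\ref{lemma:computable-bounds-spatial}.
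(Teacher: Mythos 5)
Your proof is correct and follows essentially the same route as the paper: the identical three-term splitting $\RU-\widehat{\RxU}$, $\widehat{\RxU}-\RxU$, $\RxU-U$, with \eqref{eq:bound_I_Lfrak-c} plus \eqref{eq:bound_jump_RxU_calE} for the first piece, an $L^\infty$--$L^2$ interpolation combined with \eqref{eq:bound_RtU-a} and \eqref{eq:bound_RtU-c} for the second, and \eqref{eq:bound_RxU_calE} for the third. The only (harmless) difference is that you derive the interpolation inequality $\norm{\sigma_2}_{L^\infty(I_n;L^2)}^2\le\norm{\sigma_2}_{L^2(I_n;L^2)}\norm{\sigma_2'}_{L^2(I_n;L^2)}$ yourself from the endpoint vanishing of $\widehat{\RxU}-\RxU$, whereas the paper cites \cite[eq.~(1.3)]{Ilyin-Laptev-Loss-Zelik:2016}.
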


\begin{proof}
Adding and removing terms in $\norm{\rho_2}_{L^\infty(L^2)}$ leads to
\begin{equation*}
   \norm{\rho_2}_{L^\infty(L^2)} \leq \norm{\RU-\widehat{\RxU}}_{L^\infty(L^2)} + \norm{\widehat{\RxU}-\RxU}_{L^\infty(L^2)} + \norm{\RxU-U}_{L^\infty(L^2)}
   =: \revb{\mathfrak T}_7 +\revb{\mathfrak T}_8 + \revb{\mathfrak T}_9.
\end{equation*}
We estimate the three terms on the right-hand side from above.
\paragraph*{Upper bound on $\revb{\mathfrak T}_7$.} We have
\begin{align*}
    \revb{\mathfrak T}_7
& \overset{\eqref{eq:bound_I_Lfrak-c}}{\leq}
    \max_{n\in\calN} \norm{[\widehat{\RxU}]_{n-1}}
    \overset{\eqref{eq:RtV},\eqref{eq:right_continuity_dtRtU}}{=} \max_{n\in\calN}  \norm{[\RxU]_{n-1}}
    \leq \max_{n\in\calN}  (\norm{[U]_{n-1}} + \norm{[\RxU-U]_{n-1}})\\
&\overset{\eqref{eq:bound_jump_RxU_calE}}{\leq} \max_{n\in\calN}  \Big( \norm{[U]_{n-1}} + \calE(V_h^{n,\ominus},[U]_{n-1},[g]_{n-1}) \Big).
\end{align*}

\paragraph*{Upper bound on $\revb{\mathfrak T}_8$.}
\revb{Using that $\widehat{\RxU} - \RxU$ is a polynomial
in the time variable over $I_n$ for all $n\in\calN$
and definition~\eqref{eq:RtV}, we get that
$(\widehat{\RxU} - \RxU) (t_{n-1}) = 0$
over~$I_n$ for all $n\in\calN$.
Therefore, for all $t\in[t_{n-1},t_n]$, we have
\begin{equation} \label{chinese-russian}
\begin{split}
(\widehat{\RxU}(t) - \RxU(t))^2
&= \int_{t_{n-1}}^{t} ((\widehat{\RxU}(t) - \RxU(t))^2)' {\rm d}t \\
& =  \int_{t_{n-1}}^{t} 2((\widehat{\RxU}(t) - \RxU(t))', \widehat{\RxU}(t) - \RxU(t)) {\rm d}t
\leq 2   \norm{\widehat{\RxU}-\RxU}_{L^2(I_n;L^2)}
    \norm{(\widehat{\RxU}-\RxU)'}_{L^2(I_n;L^2)}.
\end{split}
\end{equation}
We deduce}
\begin{align*}
\revb{\mathfrak T}_8
& \overset{\eqref{chinese-russian}}{\leq}
    \max_{n\in\calN} \Big(\revb{2}
    \norm{\widehat{\RxU}-\RxU}_{L^2(I_n;L^2)}
    \norm{(\widehat{\RxU}-\RxU)'}_{L^2(I_n;L^2)} \Big)^\frac{1}{2}\\
&\overset{\eqref{eq:bound_RtU}}{\leq}
    \max_{n\in\calN} (\revb{2}c_1(q_n)c_2(q_n))^\frac{1}{2}\dt_n \norm{[\RxU']_{n-1}}\leq \max_{n\in\calN} (c_1(q_n)c_2(q_n))^\frac{1}{2}\dt_n (\norm{[(\RxU-U)']_{n-1}}
        + \norm{[U']_{n-1}})\\
&\overset{\eqref{eq:bound_jump_RxU_calE}}{\leq} \max_{n\in\calN} ( \revb{2}c_1(q_n)c_2(q_n))^\frac{1}{2}\dt_n \Big( \calE(V_h^{n,\ominus},[U']_{n-1},[g']_{n-1}) + \norm{[U']_{n-1}} \Big).
\end{align*}

\paragraph*{Upper bound on $\revb{\mathfrak T}_9$.} We invoke \eqref{eq:bound_RxU_calE} to find
$
    \revb{\mathfrak T}_9 \leq \max_{n\in\calN} \sup_{t\in I_n} \calE(V_h^n,U(t),g(t)).
$
Collecting the above estimates completes the proof.
\end{proof}

Recalling that $\Delta_h^{\text{diff}} = \Delta_{pw}-\Delta_h$,
we define the {\bf initial error estimator}
\begin{equation}\label{eq:initial-estimator}
\begin{split}
\eta_{\text{init}}
&\eqq \sqrt{2}\norm{u_0-u_{0,h}}
        + \sqrt{2}C_{PS}\norm{u_1-u_{1,h}} + \sqrt{2}c_{L^2} C_{PS}  \Big( \norm{\frac{h_0^2}{p^2} \Delta_h^{\text{diff}}u_{1,h}} + \calJ(\calF_I^1,u_{1,h})\Big)  ;
\end{split}
\end{equation}
the {\bf source term oscillations}
\begin{equation}\label{eq:data-oscillation-estimator}
\begin{split}
\eta_{f}
& \eqq 2\sum_{n=1}^m \Big[ \dt_n\Tilde{c}_5(n) \norm{ \Pi_{\bq-1}^{\bot}f}_{L^1(I_n;L^2)} + c_{L^2}\norm{\frac{h_n^2}{p^2} (\Pi_{\bq-1}\Pi_p^{n,\bot}f)'}_{L^1(I_n;L^2)}\\
    &\quad + \big( 1-\delta_{1,n} \big) \Big( c_{L^2}c_1(q_n)\dt_n  \norm{\frac{(h_n^{\ominus})^2}{p^2} [(\Pi_{\bq-1}\Pi_p^{\bot}f)']_{n-1}} + c_{L^2}\norm{\frac{(h_n^{\ominus})^2}{p^2} [\Pi_{\bq-1}\Pi_p^{\bot}f]_{n-1}} \\
    &\quad + \frac{\dt_n\Tilde{\dt}_n }{4} \norm{[\Pi_{\bq-1}\Pi_p^{\bot}f]_{n-1}} + \dt_n^3
    c_2(q_n) c_5(n) \norm{[(\Pi_{\bq-1}\Pi_p^{\bot}f)']_{n-1}} \Big) \Big]\\
    &\quad + (2c_{L^2}c_1(q_1)+ (c_1(q_1)c_2(q_1))^\frac{1}{2})\dt_1 \norm{\frac{h_1^2}{p^2} (\Pi_{\bq-1}\Pi_p^{\bot}f)'(t_0^+)} + 3c_{L^2}\norm{\frac{h_1^2}{p^2} \Pi_{\bq-1}\Pi_p^{\bot}f)(t_0^+) }\\
    &\quad + \frac{3\dt_1^2}{2} \norm{(\Pi_{\bq-1}\Pi_p^{\bot}f)(t_0^+) }
    +  \dt_m^2\norm{[\Pi_{\bq-1}\Pi_p^{\bot}f]_{m}} + 2\dt_1^3
    c_2(q_1) c_5(1) \norm{(\Pi_{\bq-1}\Pi_p^{\bot}f)'(t_0^+) }\\
    &\quad + c_{L^2} \Big[ \max_{n\in\calN\setminus \{1\}}2\norm{\frac{(h_n^{\ominus})^2}{p^2} [\Pi_{\bq-1}\Pi_p^{\bot}f]_{n-1}} + \max_{n\in\calN} (1+\sqrt{2}\delta_{1,n}) \sup_{t\in I_n} \norm{\frac{h_n^2}{p^2}\Pi_{\bq-1}\Pi_p^{\bot}f(t)}\\
    &\quad + \max_{n\in\calN\setminus \{1\}} (c_1(q_n)c_2(q_n))^\frac{1}{2}\dt_n \norm{\frac{(h_n^{\ominus})^2}{p^2} [(\Pi_{\bq-1}\Pi_p^{\bot}f)']_{n-1}} \Big].
\end{split}
\end{equation}
\revr{The initial error estimator $\eta_{\text{init}}$
only depends on the initial conditions
and its projections onto the FE space.
Similarly, the source term oscillations $\eta_{f}$
only depends on the forcing forcing $f$
and its projections onto the FE space.
These estimators can be set to zero
if the initial conditions and right-hand side datum~$f$
are in the FE space.}

\revr{Next, we define} the {\bf temporal error estimator}
\begin{equation} \label{eq:temporal-estimator}
\begin{split}
\eta_{\text{time}}
& \eqq 2\sum_{n=1}^m \dt_n^3 c_2(q_n) c_5(n)
    \norm{[\Delta_h U' ]_{n-1}} + \max_{n\in \calN} \dt_n
    (\revb{2}c_1(q_n)c_2(q_n))^\frac{1}{2} \norm{[U']_{n-1}}\\
& \quad + 2\sum_{n=1}^m
    \dt_n \Tilde{c}_5(n)
    \norm{\Pi_{\bq-1}^{\bot}\Delta_h U}_{L^1(I_n;L^2)};
\end{split}
\end{equation}
the {\bf spatial error estimator}
\begin{equation} \label{eq:spatial-estimator}
\begin{split}
\eta_{\text{space}}
& \eqq 2c_{L^2}\sum_{n=1}^m \Big[ \int_{I_n} \Big( \norm{\frac{h_n^2}{p^2} \Delta_h^{\text{diff}}U'(t)} + \calJ(\calF_I^n,U'(t)) \Big) {\rm d}t \\
&\quad +  c_1(q_n)\dt_n  \norm{\frac{(h_n^{\ominus})^2}{p^2} [\Delta_h^{\text{diff}}U']_{n-1} } + \norm{\frac{(h_n^{\ominus})^2}{p^2} [\Delta_h^{\text{diff}}U]_{n-1} } \\
&\quad + c_1(q_n)\dt_n\calJ(\calF_I^{n,\ominus},[U']_{n-1}) + \calJ(\calF_I^{n,\ominus},[U]_{n-1}) \Big]\\
&\quad + c_{L^2}\max_{n\in\calN} (1+\sqrt{2}\delta_{1,n})\sup_{t\in I_n} \Big( \norm{\frac{h_n^2}{p^2} \Delta_h^{\text{diff}}U(t)} + \calJ(\calF_I^n,U(t)) \Big)\\
&\quad + c_{L^2}\max_{n\in\calN} \norm{\frac{(h_n^{\ominus})^2}{p^2} [\Delta_h^{\text{diff}}U]_{n-1} } + c_{L^2}\max_{n\in\calN} (\revb{2} c_1(q_n)c_2(q_n))^\frac{1}{2}\dt_n\norm{\frac{(h_n^{\ominus})^2}{p^2} [\Delta_h^{\text{diff}}U']_{n-1} } \\
&\quad + 2c_{L^2}\max_{n\in\calN} \calJ(\calF_I^{n,\ominus},[U]_{n-1}) + 2c_{L^2} \max_{n\in\calN}(\revb{2}c_1(q_n)c_2(q_n))^\frac{1}{2}\dt_n \calJ(\calF_I^{n,\ominus},[U']_{n-1}).
\end{split}
\end{equation}
\revr{The two error estimators above are concerned with
the temporal and spatial discretizations, respectively.
The temporal error estimator coincides
with that in~\cite{Dong-Mascotto-Wang:2024}.
On the other hand, the spatial error estimator is derived
using Baker's test function and elliptic reconstruction techniques. }

\revr{Finally, we define the} {\bf mesh-change error estimator}
\small{\begin{equation} \label{eq:mesh-change-estimator}
\begin{split}
    \eta_{\text{mesh}}
    & \eqq 2\sum_{n=1}^m \Big[  c_{L^2} \frac{c_3(q_n;3)}{\dt_n}
    \norm{\frac{h_n^2}{p^2}\Pi_p^{n,\bot}U'(t_{n-1}^-)}  + \norm{[U]_{n-1}} + \frac{\dt_n \Tilde{\dt}_n}{4}  \norm{[\Delta_h U]_{n-1}}\\
    &\qquad\qquad + (1-\delta_{1,n})\Big( c_{L^2} + \frac{\dt_n \Tilde{\dt}_n  }{4} \Big) \sum_{l=n-1}^n\frac{c_3(q_l;2)}{\dt_l}\norm{\frac{(h_n^{\ominus})^2}{p^2}\Pi_p^{l,\bot}U'(t_{l-1}^-)}\\
    &\qquad\qquad + (1-\delta_{1,n})\Big( c_{L^2}c_1(q_n)\dt_n + \dt_n^3 c_2(q_n)c_5(n) \Big)\sum_{l=n-1}^n\frac{c_3(q_l;3)}{\dt_l^2}\norm{\frac{(h_n^{\ominus})^2}{p^2}\Pi_p^{l,\bot}U'(t_{l-1}^-)}  \Big] \\
    &\quad
    + \dt_m^2 \Big( \norm{[\Delta_h U]_m } + \sum_{l=m}^{m+1}\frac{c_3(q_l;2)}{\dt_l}\norm{\frac{(h_n^{\ominus})^2}{p^2}\Pi_p^{l,\bot}U'(t_{l-1}^-)} \Big)
    +2 \norm{[U]_m}\\
    &\quad +c_{L^2} \Big[ \max_{n\in\calN} (1+\sqrt{2}\delta_{1,n})\frac{c_3(q_n;2)}{\dt_n} \norm{\frac{h_n^2}{p^2}\Pi_p^{n,\bot}U'(t_{n-1}^-)} + \max_{n\in\calN\setminus \{1\}} \sum_{l=n-1}^n\frac{2c_3(q_l;2)}{\dt_l}\norm{\frac{(h_n^{\ominus})^2}{p^2}\Pi_p^{l,\bot}U'(t_{l-1}^-)}\\
    &\quad + \max_{n\in\calN\setminus\{1\}}(c_1(q_n)c_2(q_n))^\frac{1}{2}\dt_n\sum_{l=n-1}^n\frac{c_3(q_l;3)}{\dt_l^2}\norm{\frac{(h_n^{\ominus})^2}{p^2}\Pi_p^{l,\bot}U'(t_{l-1}^-)} + \max_{n\in\calN} \norm{[U]_{n-1}} \Big].
\end{split}
\end{equation}}\normalsize
\revr{The mesh-change error estimator is used to measure
the influence of dynamic mesh modification
between neighbouring time-slabs.
In particular, $\eta_{\text{mesh}}=0$ if mesh change occurs.}

We now have all the ingredient to prove a fully computable upper bound for the error~$\norm{u-U}_{L^\infty(L^2)}$,
by combining the estimates for $\rho_1$ and $\rho_2$ from
Lemmas~\ref{lemma:bound-rho1} and~\ref{lemma:bound-rho2},
and the estimates for $\calE$ from Lemma~\ref{lemma:bound_mesh_change_dtRtU}.
\begin{theorem}[\emph{a posteriori} error bound]\label{thm:error_bound_rho}
With the above notation for the individual estimators,
we have
\begin{equation*}
\norm{u-U}_{L^\infty(L^2)}
\leq \eta_{\text{init}} + \eta_{f} + \eta_{\text{time}} + \eta_{\text{space}} + \eta_{\text{mesh}} .\qed
\end{equation*}
\end{theorem}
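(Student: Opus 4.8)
The plan is purely a matter of assembling and reorganising the two $L^\infty(L^2)$ bounds already in hand. Starting from the decomposition $e=u-U=\rho_1+\rho_2$ in \eqref{eq:error_decomp} and the triangle inequality, $\norm{u-U}_{L^\infty(L^2)}\le\norm{\rho_1}_{L^\infty(L^2)}+\norm{\rho_2}_{L^\infty(L^2)}$, so it suffices to insert Lemma~\ref{lemma:bound-rho1} for the first summand and Lemma~\ref{lemma:bound-rho2} for the second. The right-hand sides of those two lemmas are written in terms of: the initial-data errors $\norm{u_0-u_{0,h}}$ and $\norm{u_1-u_{1,h}}$; the abstract residual estimator $\calE(\cdot,\cdot,\cdot)$ of \eqref{eq:estim_L2}, evaluated on the spaces $V_h^n$, $V_h^{n,\ominus}$, $V_h^0$ with arguments $U^{(l)}$ (or the time jumps $[U^{(l)}]_{n-1}$) and the data functions $g,g'$; the temporal jumps $\norm{[U]_{n-1}}$, $\norm{[U']_{n-1}}$; and the $L^1(I_n;L^2)$ norms $\norm{\Pi_{\bq-1}^\bot f}_{L^1(I_n;L^2)}$, $\norm{\Pi_{\bq-1}^\bot\Delta_h U}_{L^1(I_n;L^2)}$ together with $\norm{[g]_{n-1}}$, $\norm{[g']_{n-1}}$ and $\dt_m^2\norm{[g]_m}$. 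Every one of these must be turned into a fully computable quantity, which is achieved by substituting the explicit form of $g=g(U)$ from \eqref{eq:RxU}.

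The key step is therefore to expand $g$ and $\calE$. On $I_n$ one has the elementary identity $(\Pi_{p,\bq-1}-\Pi_{\bq-1})f=-\Pi_{\bq-1}\Pi_p^{n,\bot}f$ (since $\Pi_{p,q_n-1}=\Pi_{q_n-1}\Pi_p^n$), so $g=-\Delta_h U+\Pi_{\bq-1}\Pi_p^{n,\bot}f+\Pi_p^{\bot}\RtU''$. Inserting the definition of $\calE$ and applying the triangle inequality, each occurrence of $\calE(V_h^n,U^{(l)}(t),g^{(l)}(t))$ is majorised, via Lemma~\ref{lemma:computable-bounds-spatial}, by the sum of: a volume/element residual term $\norm{\frac{h_n^2}{p^2}\Delta_h^{-}U^{(l)}}$; a facet-jump term $\calJ(\calF_I^n,U^{(l)})$; a data-oscillation term $\norm{\frac{h_n^2}{p^2}(\Pi_{\bq-1}\Pi_p^{n,\bot}f)^{(l)}}$; and a mesh-change term $c_3(q_n;l+2)\dt_n^{-l-1}\norm{\frac{h_n^2}{p^2}\Pi_p^{n,\bot}U'(t_{n-1}^-)}$ produced by Lemma~\ref{lemma:bound_mesh_change_dtRtU}. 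The jump estimators $\calE(V_h^{n,\ominus},[U^{(l)}]_{n-1},[g^{(l)}]_{n-1})$ and the boundary terms at $n=1$ and $n=0$ are expanded in the same way using \eqref{eq:bound_time_jump_U_RxU}--\eqref{eq:bound_time_jump_U_RxU-d} and \eqref{eq:bound_RxU-c}. The remaining scalars $\norm{[g]_{n-1}}$, $\norm{[g']_{n-1}}$, $\dt_m^2\norm{[g]_m}$ are split directly as $\norm{[g]_{n-1}}\le\norm{[\Delta_h U]_{n-1}}+\norm{[\Pi_{\bq-1}\Pi_p^\bot f]_{n-1}}+\norm{[\Pi_p^\bot\RtU'']_{n-1}}$ (and likewise for the primed version), the last term being controlled again by Lemma~\ref{lemma:bound_mesh_change_dtRtU} exactly as in Remark~\ref{remark:computability-hat-operator}; $\norm{\Pi_{\bq-1}^\bot\Delta_h U}_{L^1(I_n;L^2)}$ is already computable and is kept as is.

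Finally, one collects and sorts the resulting terms: by construction each term falls into exactly one of five bins. The contributions localised at $t_0$ (the initial-data errors and $\calE(V_h^0,u_{1,h},g'(t_0^-))$) assemble into $\eta_{init}$ in \eqref{eq:initial-estimator}; everything built from $f$ and its projections assembles into $\eta_{f}$ in \eqref{eq:data-oscillation-estimator}; the purely temporal contributions --- the jumps $\norm{[\Delta_h U']_{n-1}}$ and $\norm{[U']_{n-1}}$ weighted by $\dt_n^3 c_2(q_n)c_5(n)$ and $\dt_n(c_1(q_n)c_2(q_n))^{1/2}$, together with $\norm{\Pi_{\bq-1}^\bot\Delta_h U}_{L^1(I_n;L^2)}$ weighted by $\dt_n\Tilde{c}_5(n)$ --- assemble into $\eta_{time}$ in \eqref{eq:temporal-estimator}; the elliptic-reconstruction residual and facet-jump contributions $\norm{\frac{h_n^2}{p^2}\Delta_h^{-}U^{(l)}}$ and $\calJ(\calF_I^n,U^{(l)})$ (and their $\ominus$ counterparts) assemble into $\eta_{space}$ in \eqref{eq:spatial-estimator}; and all the $c_3(q;\ell)$-weighted terms $\norm{\frac{h_n^2}{p^2}\Pi_p^\bot U'(t_{n-1}^-)}$, together with the pure temporal jumps $\norm{[U]_{n-1}}$, $\norm{[U]_m}$ and the $\norm{[\Delta_h U]_{n-1}}$, $\norm{[\Delta_h U]_m}$ contributions (all of which vanish when no mesh change occurs), assemble into $\eta_{mesh}$ in \eqref{eq:mesh-change-estimator}. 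Summing the five bins bounds $\norm{\rho_1}_{L^\infty(L^2)}+\norm{\rho_2}_{L^\infty(L^2)}$ and hence $\norm{u-U}_{L^\infty(L^2)}$, which is the claim. The only delicate point --- bookkeeping rather than analysis --- is tracking the $\delta_{1,n}$ indicators and the scalar factors (the $\sqrt 2$'s produced by the elementary implication $\tfrac12 a^2\le b+ac\Rightarrow a\le\sqrt{2b}+2c$, the $\max_{n}$ coming from $\rho_2$ versus the $\sum_{n}$ from $\rho_1$, and the special $m$-interval terms such as $\dt_m^2\norm{[g]_m}$) so that each term lands in the correct bin with precisely the coefficient written in \eqref{eq:initial-estimator}--\eqref{eq:mesh-change-estimator}; no new idea is needed beyond careful matching.
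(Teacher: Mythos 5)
Your proposal is correct and follows essentially the same route as the paper: the theorem is proved exactly by combining the triangle inequality on the splitting $e=\rho_1+\rho_2$ with Lemmas~\ref{lemma:bound-rho1} and~\ref{lemma:bound-rho2}, expanding the $\calE(\cdot,\cdot,\cdot)$ terms and the jumps of $g$ via Lemma~\ref{lemma:computable-bounds-spatial} (itself resting on Lemma~\ref{lemma:bound_mesh_change_dtRtU}), and sorting the resulting computable quantities into the five estimators. Your bookkeeping of the $g$-expansion, including the identity $(\Pi_{p,\bq-1}-\Pi_{\bq-1})f=-\Pi_{\bq-1}\Pi_p^{n,\bot}f$ and the treatment of $\Pi_p^{\bot}\RtU''$ as in Remark~\ref{remark:computability-hat-operator}, matches the paper's intended assembly.
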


\begin{remark}[On the control of~$\eta_{\text{mesh}}$]
The Scott--Zhang operator $\calI_{sz}^n$
(in fact, also other classes of quasi-interpolators
as in~\cite{Ern-Guermond:2017})
is invariant in $\Dom \setminus \Dom_c^n$ for functions in $V_h^{n,\oplus}$,
cf. Figure \ref{fig:intersect_space}, whence
\begin{equation*}
        \norm{\Pi_p^{n,\bot}U'(t_{n-1}^-)}\leq \norm{(I_d-\calI_{sz}^n)U'(t_{n-1}^-)} = \norm{(I_d-\calI_{sz}^n)U'(t_{n-1}^-)}_{L^2(\Dom_c^n)}.
\end{equation*}
This implies that a small portions of coarsening
lead to smaller $\norm{\Pi_p^{n,\bot}U'(t_{n-1}^-)}$.
Similarly, we have
\begin{align*}
\norm{[U]_{n-1}}
&= \norm{(I_d-\calP_p^n) U(t_{n-1}^-)} \leq C_{PS} \norm{\nabla (I_d-\calP_p^n) U(t_{n-1}^-)} \\
&\leq C_{PS} \norm{\nabla (I_d-\calI_{sz}^n) U(t_{n-1}^-)} = C_{PS} \norm{\nabla(I_d-\calI_{sz}^n) U(t_{n-1}^-)}_{L^2(\Dom_c^n)}.
\end{align*}
As for the terms involving $[\Delta_h U]_{n-1}$, we have
\begin{align*}
& \norm{[\Delta_h U]_{n-1}} = \sup_{v\in L^2(\Dom), \norm{v}=1} ([\Delta_h U]_{n-1},v) = \sup_{v\in L^2(\Dom),\norm{v}=1} \Big(  (\Delta_h U(t_{n-1}^+), \Pi_p^n v) -  (\Delta_h U(t_{n-1}^-), \Pi_p^{n-1} v) \Big)\\
&\overset{\eqref{eq:discrete_laplace}}{=}  \sup_{v\in L^2(\Dom),\norm{v}=1} \Big(  -(\nabla U(t_{n-1}^+), \nabla \Pi_p^n v ) + (\nabla U(t_{n-1}^-), \nabla \Pi_p^{n-1} v )  \Big)\\
& = \sup_{v\in L^2(\Dom),\norm{v}=1} \Big(
\underbrace{(\nabla (I_d-\calP_p^n) U(t_{n-1}^-), \nabla \Pi_p^n v )}_{=0} + ( \nabla U(t_{n-1}^-), \nabla (\Pi_p^{n-1}-\Pi_p^n)v) \Big).
\end{align*}
We introduce the discrete Laplacian $\Delta_h^{\oplus}$ in the space $V_h^{n,\oplus}$
by
$
( \Delta_h^{\oplus} U(t_{n-1}^-) , W )
\eqq
( \nabla U(t_{n-1}^-), \nabla W)$, for all  $W \in V_h^{n,\oplus}$.
We denote by $\calI_{sz}^{\ominus,n}$ the Scott--Zhang interpolation operator on $V_h^{n,\ominus}$
and apply the $H^1$ orthogonality of $(\Pi_p^{n-1}-\Pi_p^n)v$ in $V_h^{n,\ominus}$ to find
\begin{align*}
( \nabla U(t_{n-1}^-), \nabla (\Pi_p^{n-1}-\Pi_p^n)v)
= ( \Delta_h^{\oplus} U(t_{n-1}^-) , (\Pi_p^{n-1}-\Pi_p^n)v )
= ( (I_d-\calI_{sz}^{\ominus,n})\Delta_h^{\oplus} U(t_{n-1}^-) , (\Pi_p^{n-1}-\Pi_p^n)v ).
\end{align*}
Observing that $I_d-\calI_{sz}^{\ominus}$ is not zero only on $D_c^n \cup D_r^n$
and collecting the above displays, we get
\begin{equation*}
    \norm{[\Delta_h U]_{n-1}} \leq 2\norm{(I_d-\calI_{sz}^{\ominus})\Delta_h^{\oplus} U(t_{n-1}^-)} = 2\norm{(I_d-\calI_{sz}^{\ominus})\Delta_h^{\oplus} U(t_{n-1}^-)}_{L^2(D_c^n \cup D_r^n)}.
\end{equation*}
Therefore, if the mesh-change is kept under control in the adaptive scheme,
then the mesh-change error estimator is also kept under control.
 \end{remark}

\section{Numerical aspects} \label{sec:numerical_aspects}
We explore several numerical aspects of method~\eqref{eq:Walkington_scheme}
and the error bounds proven above.  The numerical experiments are developed using the
\texttt{Gridap.jl} library~\cite{Verdugo-Badia:2022} in the
\texttt{Julia} programming language.

In what follows, we consider the spatial domain $\Dom=(-1,1)^2$,
which we partition into sequences
of quasi-uniform structured and unstructured shape-regular triangular meshes
\revr{using the mesh generator \texttt{Gmsh}},
and homogeneous Dirichlet boundary conditions,
which we impose strongly at the boundary degrees of freedom.
We use Lagrangian uniform nodal basis functions of uniform degree~$p$ in space;
for the time discretization,
we consider Lagrangian basis functions of degree~$q$.

We consider two test cases.
The first one admits a smooth solution reading
\begin{equation}\label{eq:smooth_test}
	u(x,y,t) \eqq \sin(\pi x)\sin(\pi y)\cos(\sqrt{2}\pi t)
    \qquad\qquad \revr{\forall (x,y,t)\in \Dom\times [0,T]};
\end{equation}
the initial conditions and source term
are computed accordingly. We fix
the final time~$T=1$.

The second case admits an unknown in closed form solution;
we solve~\eqref{eq:model_strong} with data
\begin{equation}\label{eq:rough_test}
	u_0(x,y)
	\eqq \begin{cases}
		\frac{1}{c_{r}\epsilon} \exp(\frac{1}{r^2/\epsilon^2-1}) \quad &\text{if } r<\epsilon\\
		0 \quad &\text{otherwise},
	\end{cases}   \quad
	u_1(x,y) \eqq \begin{cases}
		0.9-r^2 \quad &\text{if } r<1\\
		0 \quad &\text{otherwise},
	\end{cases} ,\quad
	f(x,y,t) \eqq 0,
\end{equation}
\revr{for all $(x,y)\in\Dom$ and $t\in[0,T]$,}
with $\epsilon=0.1$, $r=\sqrt{x^2+y^2}$,
$c_{r}=0.4439938161680794$, and~$T=0.5$.
The initial condition $u_0$ is a smooth function with  compact support $\{ r \leq \epsilon \}$ and \revr{has unit mean value};
$u_1$ is a discontinuous function.

\revb{The initial data are approximated as follows.
For the nodal interpolation,
we use the nodal values of the initial conditions.
For the $L^2$ and $H^1$ projections,
we invert the mass/stiffness matrices of the spatial-FEM spaces:
we use composite quadratures of order $2p+3$
over the submesh generated from the simplicial meshes near the singularity.
Based on our numerical observation, the choices of examples tested here
do not lead to significant differences.}

\subsection{Energy dissipation} \label{subsec:energy_dissipation}
Before studying the quality of the \emph{a posteriori} error bounds and their use
within an adaptive algorithm below, we begin by assessing the energy dissipation
properties of the method \eqref{eq:Walkington_scheme}. To that end, we set $f=0$ and introduce the total energy
\begin{equation*}
	E(t,v)
	\eqq \frac{1}{2} \norm{\partial_t v(t)}^2
	+\frac{1}{2} \norm{\nabla v(t)}^2.
\end{equation*}
Owing to the dissipative nature
of nonconforming methods in time, we expect energy dissipation
rather than energy conservation (i.e., $E(t,u)=E(0,u)$);
cf. \cite[eq. (4.1)]{Walkington:2014}.
In Table~\ref{tab:energy_dissipation},
we assess the influence of~$q$
on the energy dissipation;
we select $h=\dt=10^{-2}$, $p=1$, $q=2,...,5$, a uniform static mesh,
and the test case with finite Sobolev regularity exact solution
with data as in~\eqref{eq:rough_test}.

\begin{table}[ht]
	\centering
	\begin{tabular}{|c|c|c|c|c|}
		\hline
		$q$                            & 2       & 3       & 4       & 5       \\ \hline
		$\frac{E(0,U)-E(T,U)}{E(0,U)}$ & 1.79e-2 & 1.33e-4 & 5.48e-7 & 1.63e-9 \\ \hline
	\end{tabular}
	\caption{Relative numerical energy dissipation on static  meshes with $h=\dt=10^{-2}$, $p=1$, and $q=2,...,5$.}
	\label{tab:energy_dissipation}
\end{table}

From Table~\ref{tab:energy_dissipation}, we see that
the energy dissipation vanishes exponentially with respect to $q$:
although the scheme is formally dissipative, dissipation appears to be negligible \revb{for high order approximations}.
As we will see below,
when dynamic mesh modification is considered,
energy dissipation is more apparent
due to the loss of information
when projecting the discrete solution from a time slab to the next one,
cf. \cite{Dupont:1973}.

\subsection{Effectivity of the error estimators} \label{subsec:computing-EE}
In practice, the time instance when the error attains its maximum, cf. \eqref{eq:xi}, is unknown. We employ the  adaptive algorithm \revr{from \cite[Section 4.3]{Dong-Mascotto-Wang:2024}  to determine a practical value of~$m$ and set $\xi = t_{m-1}$, using the error indicator}
$\tilde{\eta}_n: = \dt_n^3 c_2(q_n) c_5(n)
    \norm{[\Delta_h U' ]_{n-1}}   +
    \dt_n \Tilde{c}_5(n)
    \norm{\Pi_{\bq-1}^{\bot}\Delta_h U}_{L^1(I_n;L^2)}$ \revr{as a simplified error estimator to save the computational cost}.
The procedure is as follows:
1) For $n=1$, set $m=1$, $\xi=0$, and compute $\tilde{\eta}_1$. 2) For $n=2,\dots, N$, compute $\tilde{\eta}_n$. If $\tilde{\eta}_n \geq \tilde{\eta}_m$, then update $m=n$; otherwise, leave $m$ unchanged.

We assess the efficiency of the temporal~\eqref{eq:temporal-estimator} spatial~\eqref{eq:spatial-estimator} \revr{and initial~\eqref{eq:initial-estimator}} error estimators for the test
case with analytic exact solution~\eqref{eq:smooth_test}.
To this end, we introduce the effectivity indices
\begin{equation} \label{eq:eff-indices}
\kappa_{\text{time}}
= \frac{\eta_{\text{time}}}{\norm{e}_{L^\infty(L^2)}},
\qquad\qquad\qquad
\kappa_{\text{space}}
= \frac{\eta_{\text{space}}}{\norm{e}_{L^\infty(L^2)}}, \qquad\qquad\qquad \revr{\kappa_{\text{init}}
= \frac{\eta_{\text{init}}}{\norm{e}_{L^\infty(L^2)}}}.
\end{equation}
Here and below, we set $c_{L^2}=\frac{1}{10}$, $\frac{1}{30}$ and $\frac{1}{40}$ for $p=1,2,3$, respectively;
these choices of $c_{L^2}$ are based on our empirical experience for elliptic problems.
We consider polynomial degrees $p=q=2,3$,
and uniformly-refined sequences of spatial (either structured or quasi-uniform) meshes. \revb{Three examples for quasi-uniform unstructured meshes are given in Figure \ref{fig:mesh_unstructured}.}

\begin{figure}[htb]
    \centering
    \includegraphics[width=0.3\linewidth]{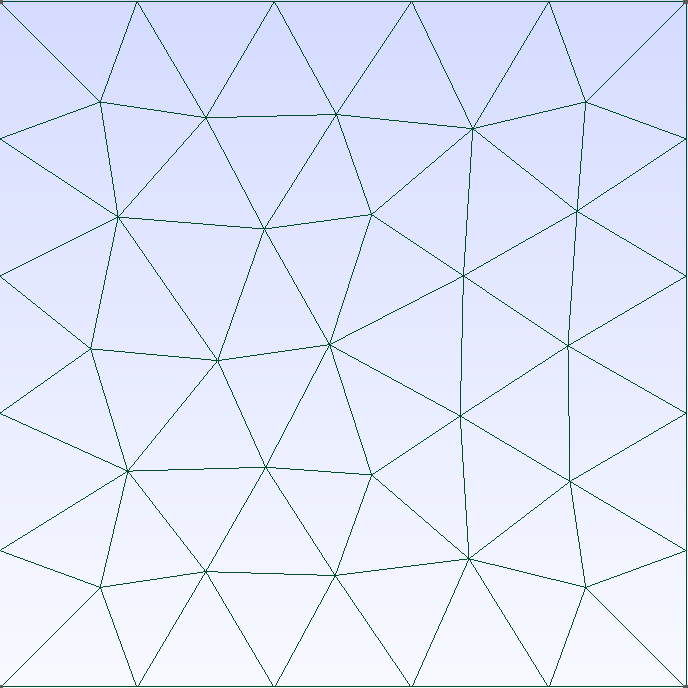}
    \includegraphics[width=0.3\linewidth]{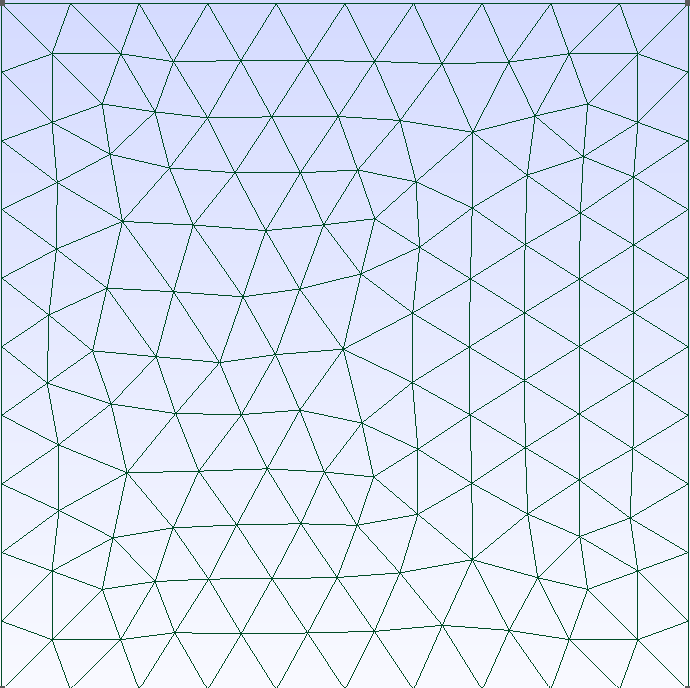}
    \includegraphics[width=0.3\linewidth]{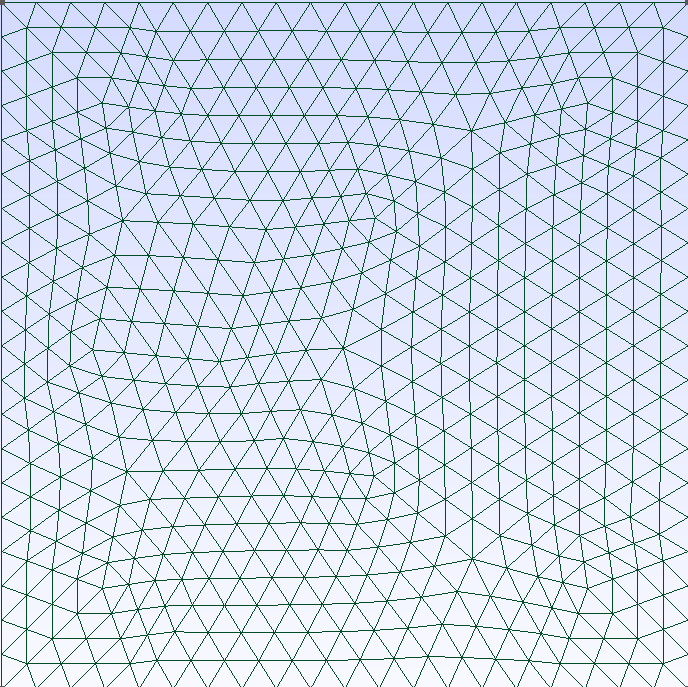}
    \caption{Quasi-uniform meshes used in the numerical experiment}
    \label{fig:mesh_unstructured}
\end{figure}

We tested approximation of the initial conditions~$u_0$ and~$u_1$,
and the source term~$f$ with either their Lagrangian interpolants
or $L^2$ projections.
Based on the numerical results, both choices of meshes and all choices of data
approximation lead to similar behaviour.
For brevity, we only present the numerical results based on the structured meshes
and interpolated data.

In Figure~\ref{fig:smooth_effectivity},
we illustrate the $L^\infty(L^2)$ errors
along with the estimators~$\eta_{\text{space}}$
and $\eta_{\text{time}}$
in~\eqref{eq:temporal-estimator} and~\eqref{eq:spatial-estimator}
together with their averaged slopes
on the left-panel,
and the effectivity indices in~\eqref{eq:eff-indices}
on the right-panel.

\begin{figure}[ht]
    \centering
    \includegraphics[width=0.45\linewidth]{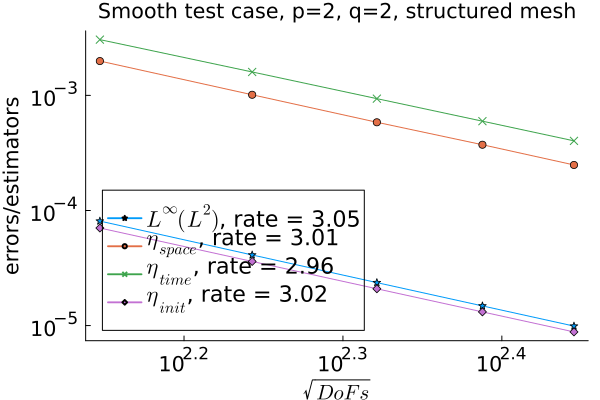}
    \includegraphics[width=0.45\linewidth]{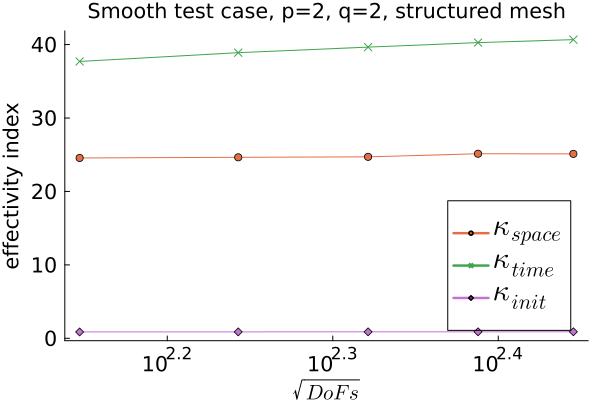}
    \includegraphics[width=0.45\linewidth]{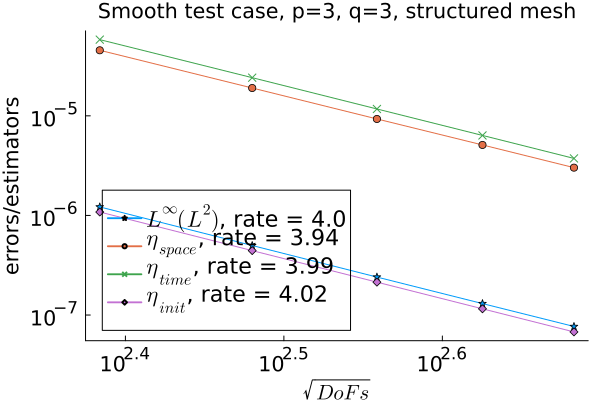}
    \includegraphics[width=0.45\linewidth]{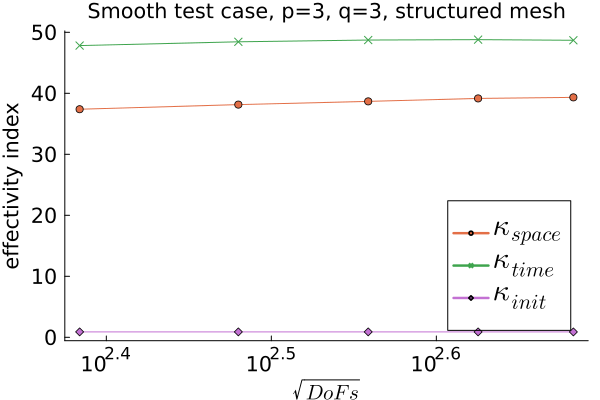}
    \caption{Smooth solution in~\eqref{eq:smooth_test};
    sequences of uniformly-refined structured triangular meshes;
    $p=q=2,3$.
    \emph{Left-panel:} $L^\infty(L^2)$ error, and estimators
    in~\eqref{eq:temporal-estimator} and~\eqref{eq:spatial-estimator}.
    \emph{Right-panel:} effectivity indices in~\eqref{eq:eff-indices}.}
    \label{fig:smooth_effectivity}
\end{figure}

From the results in Figure~\ref{fig:smooth_effectivity},
it is apparent that the estimators $\eta_{\text{space}}$
and $\eta_{\text{time}}$ converge optimally.
We also report (without presenting the results for brevity) that using either the Lagrangian interpolation
or the $L^2$ projection of the initial conditions
and the source term, the effectivity indices
in~\eqref{eq:eff-indices} do not essentially grow
or grow very moderately. The above results are in contrast with~\cite[Tables~7 and~8]{Gorynina-Lozinski-Picasso:2019},
where Lagrangian interpolation results to numerical results
with effectivity indices growing considerably with mesh refinements.
We conjecture that the estimator behaviour in \cite[Tables~7 and~8]{Gorynina-Lozinski-Picasso:2019}
is due to the different \emph{a posteriori} error estimator used there.

In Table~\ref{tab:error_estimators_pq},
we investigate numerically the behaviour
of the errors and estimators
for the $(p,q)$-version of the method.
We only consider the case of
Lagrangian interpolated data,
a uniform spatial mesh with~$6$ nodes
along each direction, and set a uniform time-step equal to~$h$;
similar results have been obtained
using the $L^2$ projection of the data and the unstructured meshes (omitted for brevity).

\begin{table}[ht]
    \centering
    \begin{tabular}{|c|c|c|c|c|c|c|}
    \hline
    error & $q=2$   & $q=3$   & $q=4$   & $q=5$   & $q=6$   & $q=7$   \\ \hline
    $p=2$ & 6.08e-2 & 4.99e-2 & 4.99e-2 & 4.99e-2 & 4.99e-2 & 4.99e-2 \\ \hline
    $p=3$ & 2.12e-2 & 4.98e-3 & 5.19e-3 & 5.40e-3 & 5.44e-3 & 5.47e-3 \\ \hline
    $p=4$ & 2.12e-2 & 9.00e-4 & 6.77e-4 & 6.84e-4 & 6.89e-4 & 6.87e-4 \\ \hline
    $p=5$ & 2.12e-2 & 6.33e-4 & 7.12e-5 & 6.85e-5 & 6.99e-5 & 7.08e-5 \\ \hline
    $p=6$ & 2.12e-2 & 6.22e-4 & 2.20e-5 & 6.82e-6 & 6.83e-6 & 7.15e-6 \\ \hline
    $p=7$ & 2.12e-2 & 6.21e-4 & 2.13e-5 & 1.09e-6 & 5.07e-7 & 5.31e-7 \\ \hline
    \end{tabular}

    \begin{tabular}{|c|c|c|c|c|c|c|}
    \hline
    $\eta_{\text{space}}$ & $q=2$   & $q=3$   & $q=4$   & $q=5$   & $q=6$   & $q=7$   \\ \hline
    $p=2$             & 1.07e0  & 1.31e0  & 1.47e0  & 1.55e0  & 1.56e0  & 1.56e0  \\ \hline
    $p=3$             & 7.79e-2 & 9.65e-2 & 1.70e-1 & 1.99e-1 & 2.27e-1 & 2.39e-1 \\ \hline
    $p=4$             & 6.33e-2 & 6.68e-2 & 9.02e-3 & 1.53e-2 & 1.87e-2 & 2.18e-2 \\ \hline
    $p=5$             & 6.68e-4 & 5.51e-4 & 7.03e-4 & 1.03e-3 & 1.82e-3 & 1.79e-3 \\ \hline
    $p=6$             & 6.41e-5 & 4.55e-5 & 5.18e-5 & 6.74e-5 & 1.04e-4 & 1.45e-4 \\ \hline
    $p=7$             & 1.09e-6 & 4.25e-6 & 3.75e-6 & 4.72e-6 & 5.92e-6 & 1.05e-5 \\ \hline
    \end{tabular}

    \begin{tabular}{|c|c|c|c|c|c|c|}
    \hline
    $\eta_{\text{time}}$ & $q=2$   & $q=3$   & $q=4$   & $q=5$   & $q=6$   & $q=7$   \\ \hline
    $p=2$          & 8.12e-1 & 1.28e-1 & 4.77e-2 & 9.28e-3 & 1.34e-3 & 1.49e-4 \\ \hline
    $p=3$          & 7.04e-1 & 9.29e-2 & 5.04e-2 & 2.29e-2 & 5.79e-3 & 1.43e-3 \\ \hline
    $p=4$          & 3.86e-1 & 3.07e-2 & 1.37e-2 & 8.56e-3 & 3.55e-3 & 1.17e-3 \\ \hline
    $p=5$          & 3.09e-1 & 1.53e-2 & 3.26e-3 & 2.02e-3 & 1.22e-3 & 5.03e-4 \\ \hline
    $p=6$          & 2.97e-1 & 1.18e-2 & 9.33e-4 & 3.74e-4 & 1.99e-4 & 1.17e-4 \\ \hline
    $p=7$          & 2.95e-1 & 1.13e-2 & 4.75e-4 & 6.88e-5 & 3.01e-5 & 1.80e-5 \\ \hline
    \end{tabular}

    \begin{tabular}{|c|c|c|c|c|c|c|}
    \hline
    polynomial degree       & 2     & 3     & 4     & 5     & 6     & 7     \\ \hline
    $\kappa_{\text{space}}$ & 17.59 & 19.37 & 13.32 & 15.03 & 15.22 & 19.74 \\ \hline
    $\kappa_{\text{time}}$  & 13.35 & 18.65 & 20.23 & 29.48 & 29.13 & 33.89 \\ \hline
    \end{tabular}
    \caption{Smooth solution in \eqref{eq:smooth_test} with $p=2,...,7$,
    $q=2,...,7$.
    $L^\infty(L^2)$ error
    (first table);
    estimator~$\eta_{\text{space}}$ \eqref{eq:spatial-estimator} (second table);
    estimator~$\eta_{\text{time}}$ \eqref{eq:temporal-estimator} (third table);
    effectivity indices \eqref{eq:eff-indices} computed
    from the diagonal entries of the other three tables
    (fourth table).}
    \label{tab:error_estimators_pq}
\end{table}

The $L^\infty(L^2)$ error and space/time estimators
converge exponentially in terms of $p$ and~$q$. We observe that  the effectivity indices appear to grow fairly slowly.
One reason may be that the error estimator
from Section~\ref{sec:apost-Linfty-L2}
is slightly subopitmal in $q$ by one order; see Remark \ref{Rem:subopt}.

\subsection{Effect of mesh modification} \label{subsec:mesh_change}
We investigate the influence of the mesh-change
on the performance of the error and error estimators.
We consider the test case with
the smooth solution in~\eqref{eq:smooth_test}.
We consider two structured meshes
$\calT_1$ and~$\calT_2$ with~$N_1=10$
and $N_2=5,6,\dots,30$ nodes in each direction, respectively.
We select $\calT_1$ and $\calT_2$ on the time intervals~$I_n$
with odd and even indices~$n$, respectively;
the uniform time-step is set to $\dt=\frac{1}{N_1}$.
The operator $\calP_p^n$ in~\eqref{eq:Walkington_scheme}
is selected to be the Lagrange interpolant.
The results are shown in Figure~\ref{fig:mesh_change}.

\begin{figure}[ht]
    \centering
    \includegraphics[width=0.45\linewidth]{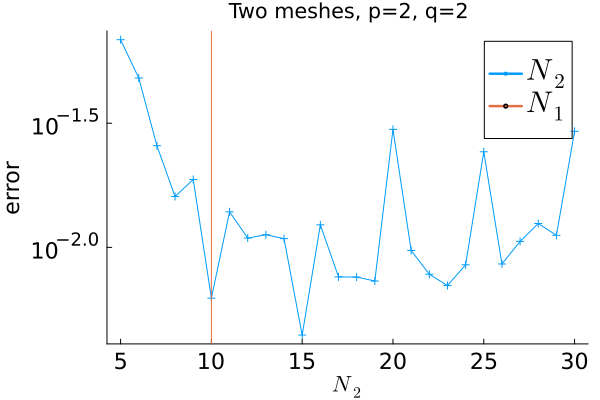}
    \includegraphics[width=0.45\linewidth]{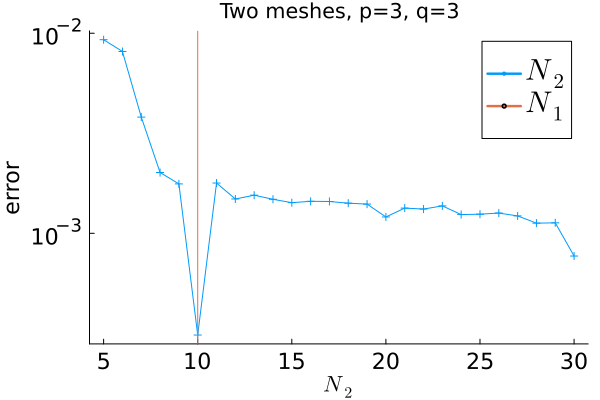}

    \includegraphics[width=0.45\linewidth]{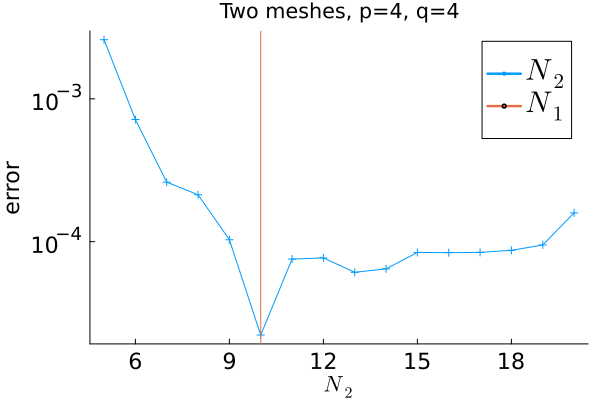}
    \includegraphics[width=0.45\linewidth]{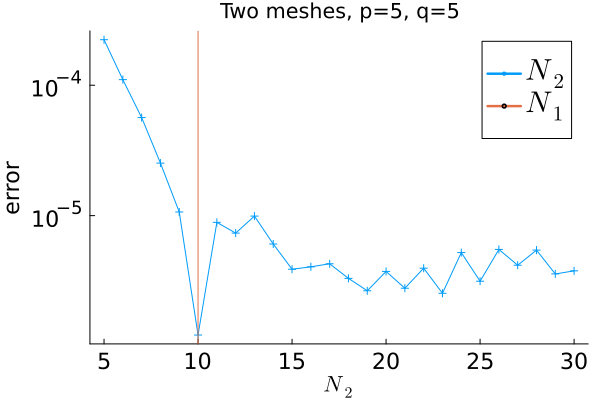}
    \caption{Errors for the smooth solution in \eqref{eq:smooth_test}
    on sequences of meshes with number of nodes $N_1$ and $N_2$ in each spatial direction
    for odd and even indices~$n$, respectively,
    with fixed mesh-size and time-step, and $p=q=2,...,5$.}
    \label{fig:mesh_change}
\end{figure}

From Figure~\ref{fig:mesh_change}, it is apparent that
the mesh-change has a significant influence
on the behaviour of the $L^\infty(L^2)$ error,
which matches with the theoretical findings
in \cite[Theorem~3.2]{Karakashian-Makridakis:2005} for the nonlinear wave equation.

Next, we assess the practical behaviour
of the estimators $\eta_{\text{space}}$, $\eta_{\text{time}}$, and $\eta_{\text{mesh}}$.
We pick $N_1\in\{ 5, 8, 11, 14, 17 \}$,
$N_2=N_1-1$, $\dt=\frac{1}{N_1}$, and $p=q=2,3,4$,
$\calP_p^n$ selected either as the Lagrange interpolant,
$L^2$ , or $H^1$ projections.
In this case, we have $\calT_h^{n,\ominus}=\{ \Dom \}$,
$\calF_I^{n,\ominus}=\emptyset$ and $h_n^\ominus=2\sqrt{2}$.
For the sake of brevity,
we do not report the numerical results and only state the conclusions here.
Owing to the mesh change, the error in the $L^\infty(L^2)$ norm
\revb{and the estimators}
converges of order $O(\dt^q)$ only,
\ie we lose one order of convergence,
with the exception of the following terms:
\begin{subequations}\label{eq:dominant_estimators}
\begin{align}
    &\eta_{\text{space},1}\eqq \sum_{n=1}^m \dt_n c_1(q_n) \Big( \norm{\frac{(h_n^\ominus)^2}{p^2} [\Delta_h^{\text{diff}}U']_{n-1} } + \calJ(\calF_I^{n,\ominus},[U']_{n-1}) \Big)\\
    &\eta_{\text{space},2}\eqq \sum_{n=1}^m \Big( \norm{\frac{(h_n^\ominus)^2}{p^2} [\Delta_h^{\text{diff}}U]_{n-1} } + \calJ(\calF_I^{n,\ominus},[U]_{n-1}) \Big)\\
    &\eta_{\text{mesh},1}\eqq \sum_{n=1}^m \dt_n^{-1} c_3(q_n;3)\norm{\frac{(h_n^\ominus)^2}{p^2}\Pi_p^{n,\bot}U'(t_{n-1}^-)}.
\end{align}
\end{subequations}\normalsize
In Table~\ref{tab:smooth_test_eta_mesh}, we present the values and convergence rates \revb{(in the parentheses)}
for each term in~\eqref{eq:dominant_estimators}, with $\calP_p^n$ in~\eqref{eq:Walkington_scheme}
selected to be the Lagrange interpolant.
The $L^2$  and $H^1$ projectors have also been tested and they deliver results similar to those
obtained using the Lagrange interpolant (omitted for brevity).

\begin{table}[ht]
    \centering
    \begin{tabular}{|c|c|c|c|c|c|}
    \hline
    value (rate) with $p=q=2$    & mesh 1  & mesh 2        & mesh 3        & mesh 4        & mesh 5        \\ \hline
    $\eta_{\text{space},1}$ & 8.07e0  & 8.19e0(0.06)  & 8.36e0(-0.01) & 8.29e0(0.06)  & 8.30e0(-0.01) \\ \hline
    $\eta_{\text{space},2}$ & 1.23e1  & 1.28e1(-0.17) & 1.33e1(-0.20) & 1.33e1(0.01)  & 1.34e1(-0.05) \\ \hline
    $\eta_{\text{mesh},1}$ & 1.30e2  & 1.32e2(-0.06) & 1.39e2(-0.29) & 1.37e2(0.08)  & 1.39e2(-0.11) \\ \hline
    \end{tabular}

    \begin{tabular}{|c|c|c|c|c|c|}
    \hline
    value (rate) with $p=q=3$   & mesh 1  & mesh 2        & mesh 3        & mesh 4        & mesh 5        \\ \hline
    $\eta_{\text{space},1}$ & 1.37e1 & 8.39e-1(1.10) & 6.24e-1(0.96) & 5.24e-1(0.74) & 4.22e-1(1.13) \\ \hline
    $\eta_{\text{space},2}$ & 2.10e0 & 1.66e0(0.52) & 1.35e0(0.67) & 1.22e0(0.40) & 1.01e0(1.02) \\ \hline
    $\eta_{\text{mesh},1}$ & 1.06e2  & 6.90e1(0.97)  & 3.54e1(2.17)  & 2.67e1(1.19)  & 1.95e1(1.64) \\ \hline
    \end{tabular}
    \caption{Smooth solution in~\eqref{eq:smooth_test};
    ``switching'' meshes with
    $N_1\in\{5,8,11,14,17\}$ and $N_2=N_1-1$ nodes in each
    spatial direction, respectively.
    Top table: $p=q=2$. Bottom table: $p=q=3$. }
    \label{tab:smooth_test_eta_mesh}
\end{table}

From Table \ref{tab:smooth_test_eta_mesh}, we observe that
$\eta_{\text{space},i}$, $i=1,2$,
and $\eta_{\text{mesh},1}$ are close to $O(\dt^{q-2})$,
and $\eta_{\text{mesh},1}$ is dominant.
Finally, in order to better understand
the different influence of $\calP_p^n$ on the above estimators,
in Table~\ref{tab:smooth_test_eta_mesh3_initial_data}
we show the values and convergence rates \revb{(in the parentheses)} of~$\eta_{\text{mesh},1}$
for three choices of $\calP_p^n$, with $p=q=3$.
The other terms are also tested and the corresponding,
analogous results are omitted for brevity.
\begin{table}[ht]
    \centering
    \begin{tabular}{|c|c|c|c|c|c|}
    \hline
    $\eta_{\text{mesh},1}$ with $p=q=3$      & mesh 1 & mesh 2       & mesh 3       & mesh 4       & mesh 5       \\ \hline
    interpolator    & 1.06e2 & 6.90e1(0.97) & 3.54e1(2.17) & 2.67e1(1.19) & 1.95e1(1.64) \\ \hline
    $L^2$ projector & 1.21e2 & 8.91e1(0.69) & 5.03e1(1.86) & 4.08e1(0.89) & 3.06e1(1.51) \\ \hline
    $H^1$ projector & 6.25e1 & 3.91e1(1.05) & 1.93e1(2.30) & 1.35e1(1.51) & 9.29e0(1.97) \\ \hline
    \end{tabular}
    \caption{Smooth solution in~\eqref{eq:smooth_test};
    ``switching'' meshes with
    $N_1\in\{5,8,11,14,17\}$ and $N_2=N_1-1$ nodes in each spatial direction and various choices of $\calP_p^n$, $p=q=3$.}
    \label{tab:smooth_test_eta_mesh3_initial_data}
\end{table}
From Table \ref{tab:smooth_test_eta_mesh3_initial_data},
we observe that the values of the estimators obtained using
the $H^1$ projector are smaller than those obtained
using the other two projections.
\revr{Overall, the above numerical tests suggest avoiding a
massive change of the spatial meshes in consecutive
space-time slabs.
Furthermore, the $H^1$ projection is to be preferred
for imposing the initial condition in each time interval
over other choices such as the nodal interpolation.}

\subsection{A space-time adaptive scheme} \label{subsec:adaptive_scheme}
We propose and assess a
space-time adaptive algorithm for
\eqref{eq:Walkington_scheme}.
To minimize the computational cost of computing the estimators, we omit the data oscillation term $\eta_{f}$, which appears to be of higher order in the examples selected, and we focus on the
the dominant and ``easy-to-compute'' terms. In particular, the space–time adaptive algorithm does not take into account the mesh-change estimator $\eta_{\text{mesh}}^n$.
We use $\eta_{\text{time}}$ to refine/coarsen in time.
$\eta_{\text{space}}$ is used to refine/coarsen in time in space: elements are refined via the newest-vertex-bisection strategy, coarsening strategy follows that from~\cite[Sect.~3]{Chen-Zhang:2010}.

The adaptive algorithm, inspired by the algorithms in \cite{Chen-Zhang:2010,Gaspoz_Siebert_Kreuzer:2019,Cangiani_Georgoulis_Sutton:2021},  employs
localised error indicators associated with
the error estimators in~\eqref{eq:initial-estimator},
\eqref{eq:temporal-estimator},
\eqref{eq:spatial-estimator},
and~\eqref{eq:mesh-change-estimator}.
Setting
\[
\mathcal{H}(h,p,W):=  \norm{\frac{h^2}{p^2}(\Delta-\Delta_h) W}_{L^2(K)} +  \sum_{F\in\calF_K}
\norm{\frac{h^\frac{3}{2}}{p^\frac{3}{2}}[\![\bn_F \cdot \nabla W]\!]}_{L^2(F)},
\]
for brevity,
we define the local {\bf initial error indicators}
\begin{equation}\label{eq:eta_0_local}
\begin{split}
\eta_{0,K}
\eqq \norm{u_0-u_{0,h}}_{L^2(K)}
    + C_{PS}\norm{u_1-u_{1,h}}_{L^2(K)}  + c_{L^2}C_{PS} \mathcal{H}(h_0,p,u_{1,h}),
\end{split}
\end{equation}
$ K\in \calT_h^0$
and their global counterpart $\eta_0=(\sum_{K\in\calT_h^0}\eta_{0,K}^2)^\frac{1}{2}$;
the {\bf time error indicators}
\begin{equation*}
\begin{split}
    \eta_{\text{time}}^n &\eqq \dt_n
    (c_1(q_n)c_2(q_n))^\frac{1}{2} \norm{[U']_{n-1}}
    \qquad\qquad
    n\in \calN;
\end{split}
\end{equation*}
the {\bf spatial error indicators}
\begin{equation*}
\begin{split}
\eta_{\text{space},K,1}^n
&\eqq \sup_{t \in I_n} \mathcal{H}(h_n,p,U),\qquad
\eta_{\text{space},K,2}^n
\eqq c_1(q_n) \mathcal{H}(h_n^{\ominus},p,U'),
\qquad \eta_{\text{space},K,3}^n
\eqq   \mathcal{H}(h_n^{\ominus},p,\dt_n^{-1}[U]_{n-1}),
\end{split}
\end{equation*}
collected in $
\eta_{\text{space},K}^n\eqq \eta_{\text{space},K,1}^n + \eta_{\text{space},K,2}^n + \eta_{\text{space},K,3}^n$, $n\in \calN$, \revb{for all $K\in \calT_h^n$},  and their global counterpart
\begin{equation*}
    \eta_{\text{space}}^n \eqq c_{L^2}\Big( (\sum_{K\in\calT_h^n}(\eta_{\text{space},K,1}^n)^2)^\frac{1}{2} + \sum_{k=1}^n  (\sum_{K\in\calT_h^n}(\eta_{\text{space},K,2}^n + \eta_{\text{space},K,3}^n)^2)^\frac{1}{2} \Big).
\end{equation*}
\revr{The above indicators are a simplified version
of the corresponding estimators:
based on the author's numerical observations,
using such indicators gives similar results to using
the full estimators
with a considerably smaller computational cost.}
We employ Dörfler's marking strategy \cite{Dorfler:1996}
to select elements for refinement and/or coarsening,
given parameters~$\theta_c$ (coarsening) and~$\theta_r$ (refinement).
Specifically, we mark two sets of elements with minimal cardinality
such that the sum of the error estimator contributions
over each set exceeds $\theta_r$ or $\theta_c$ times
the total error estimator, respectively.
The pseudocode of the proposed adaptive procedure
is given in Algorithm \ref{alg:adaptive}.

\begin{algorithm}[ht]
\caption{Space-time adaptive algorithm}\label{alg:adaptive}
\begin{algorithmic}[1]
\Require Tolerances $\delta_0$, $\delta_{\text{space}}$, $\delta_{\text{mesh}}$,
    $\delta_{\text{time}}>0$, coarsening parameter $\theta_c\in (0,1)$,
    refinement parameter $\theta_r\in (0,1)$, initial mesh $\calT_h^0$, final time $T$,
    initial time-step $\dt_1$, maximal $\sharp$ of attempts~$k_{\text{time}}$, $k_{\text{space}}$.
\While{$\eta_0 >\delta_0$} \Comment{control initial error}
    \State refine $\calT_h^0$ based on $\eta_{\text{0},K}$ and D\"orfler’s marking with $\theta_r$.
\EndWhile
\State set $t=0$, $n=1$.
\While{t<T}
\State set $k \gets 0$, $\calT_h^n \gets \calT_h^{n-1}$. \Comment{initialize time parameters}
\State update $U|_{I_n}$ with time-step $\dt_n$ and mesh $\calT_h^n$. \Comment{trial update}
\If{ $\eta_{\text{time}}^n>\delta_{\text{time}}$, $k<k_{\text{time}}$ } \Comment{time adaptivity}
\State $\dt_n \gets \frac{\dt_n}{2}$, $k\gets k +1$, \textbf{go to} 7.
\ElsIf{ $\eta_{\text{time}}^n<\frac{\delta_{\text{time}}}{2}$, $k<k_{\text{time}}$ }
\State $\dt_n \gets \min ( \frac{3\dt_n}{2}, T-t )$, $k\gets k +1$, \textbf{go to} 7.
\EndIf
\State set $k \gets 0$. \Comment{initialize refinement parameters}
\State compute $U|_{I_n}$ with time-step $\dt_n$ and mesh $\calT_h^n$.\Comment{trial update}
\If{$c_{L^2}\Big( \sum_{K\in\calT_h^n}(\eta_{\text{space},K}^n)^2 \Big)^\frac{1}{2} > \delta_{\text{space}}$ and $k<k_{\text{space}}$} \Comment{mesh refinement}
\State refine $\calT_h^n$, based on $\eta_{\text{space},K}^n$ and D\"orfler’s marking with $\theta_r$.
\State compute $U|_{I_n}$ with time-step $\dt_n$ and mesh $\calT_h^n$.
\State $k \gets k +1$.
\EndIf

\State coarsen $\calT_h^n$, based on $\eta_{\text{space},K}^n$ and D\"orfler’s marking with $\theta_c$. \Comment{mesh coarsening}
\If{$\Big( \sum_{K\in\calT_h^n}(\eta_{\text{space},K}^n)^2 \Big)^\frac{1}{2} > \delta_{\text{mesh}}$}
\State $\theta_c \gets \frac{\theta_c}{2}$, \textbf{go to} and \textbf{redo} 20. \Comment{control coarsening error}
\EndIf

\State compute $U|_{I_n}$ with time-step $\dt_n$ and mesh $\calT_h^n$.\Comment{definitive update}

\State $t \gets t+\dt_n$, $\dt_{n+1}\gets \dt_n$, $n \gets n+1$. \Comment{go to next time-step}
\EndWhile

\end{algorithmic}
\end{algorithm}

\subsection{Numerical results for the adaptive scheme}
\label{subsec:numerical-experiments}
We assess the performance of Algorithm \ref{alg:adaptive}
for the test case with finite Sobolev regularity in~\eqref{eq:rough_test}.
We construct a reference solution by employing a static uniform spatial
mesh with $309123$ space-time DoFs at each time step, $\dt=h$, $p=1$, and $q=2$.

We set $p=1$, $q=2$, $\delta_0 = \delta_{\text{time}}=\delta_{\text{space}}=\delta_{\text{mesh}}=10^{-2}$,
$\theta_r=0.1$, $\theta_c=0.1$, $\delta_{\text{mesh}}=\delta_{\text{space}}/2$, $k_{\text{time}}=10$, $k_{\text{space}}=10$,
and we consider the $H^1$ projection of
the initial data at each time-step.
In Figure~\ref{fig:rough_adaptive},
the numerical solution with around $4\times 10^4$ space-time DoFs is compared with
the reference solution above.
In the first row, the initial mesh and the initial data are depicted;
in the second row, we provide the mesh, and the numerical and reference
solutions at the final time;
the third row contains the number of space-time DoFs \revb{of the numerical solution and reference solution}
in each time interval, the size of all time-steps,
and the estimators \revb{and $L^2$ error computed with the help of the reference solution} at the time nodes, respectively.
We also tested the algorithm for $q=3,4$, which lead to similar results
as those obtained with $q=2$;
in particular, $q=2,3,4$ lead to time-steps of approximate
sizes $0.015,0.0225,0.035$, respectively.

\begin{figure}[!htb]
    \centering
    \begin{subfigure}{0.3\textwidth}
    \includegraphics[width=1\linewidth]{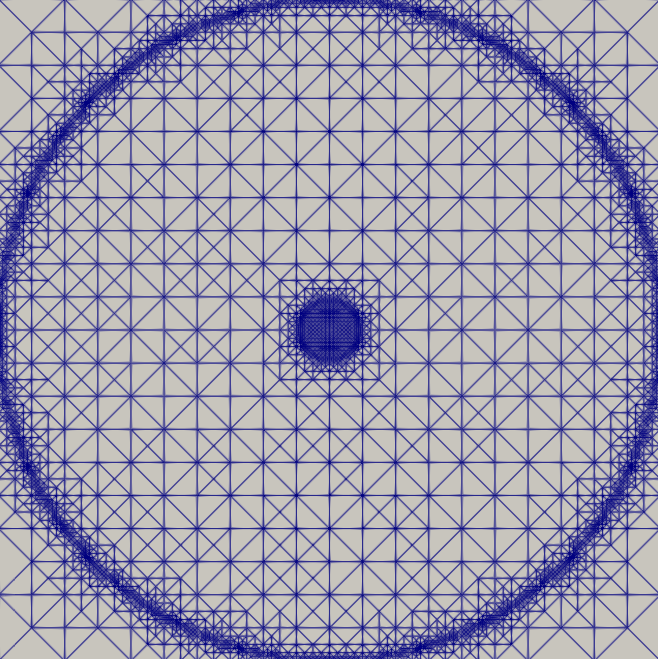}
    \subcaption{Initial mesh $\calT_h^0$}
    \end{subfigure}
    \begin{subfigure}{0.3\textwidth}
    \includegraphics[width=1\linewidth]{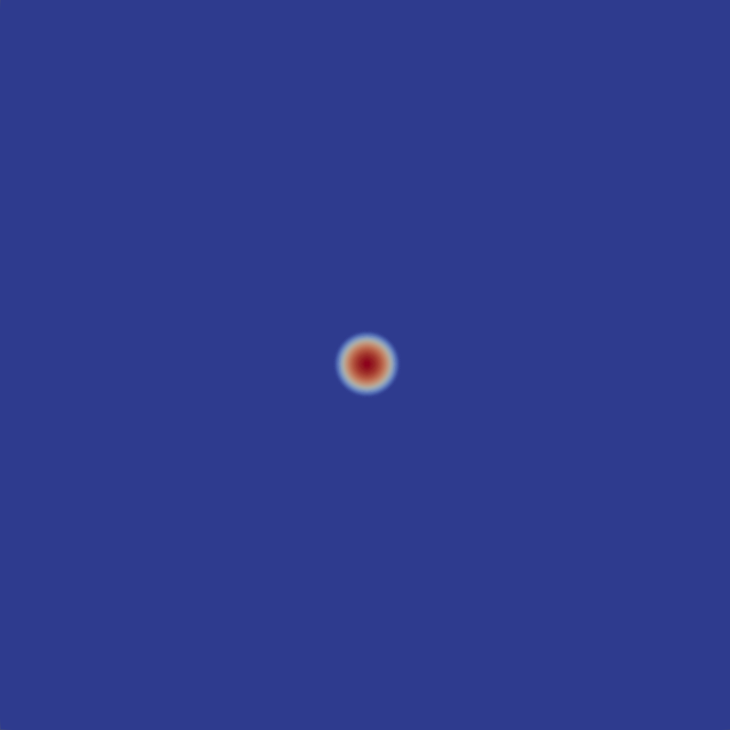}
    \subcaption{Initial datum $u_{0,h}$}
    \end{subfigure}
    \begin{subfigure}{0.3\textwidth}
    \includegraphics[width=1\linewidth]{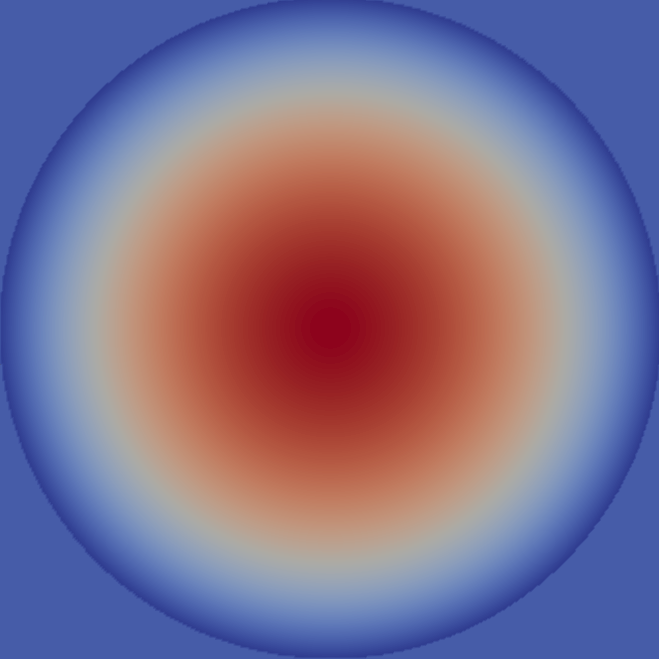}
    \subcaption{Initial datum $u_{1,h}$}
    \end{subfigure}

    \begin{subfigure}{0.298\textwidth}
    \includegraphics[width=1\linewidth]{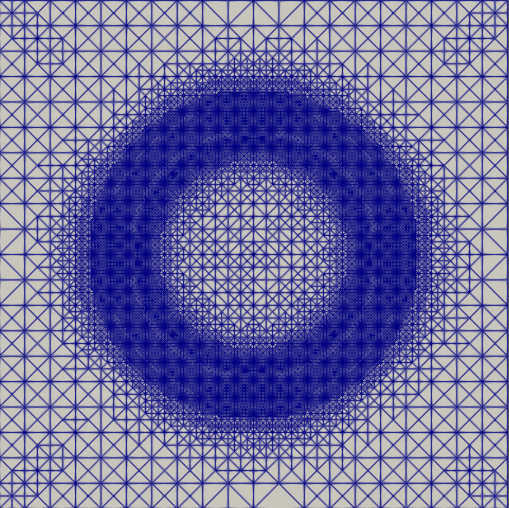}
    \subcaption{Final mesh $\calT_h^N$}
    \end{subfigure}
    \begin{subfigure}{0.3\textwidth}
    \includegraphics[width=1\linewidth]{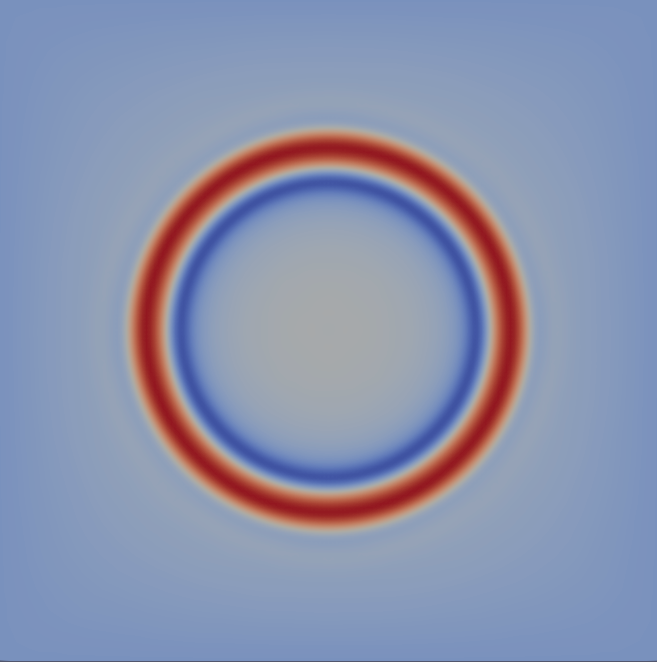}
    \subcaption{Numerical solution at $T$}
    \end{subfigure}
    \begin{subfigure}{0.301\textwidth}
    \includegraphics[width=1\linewidth]{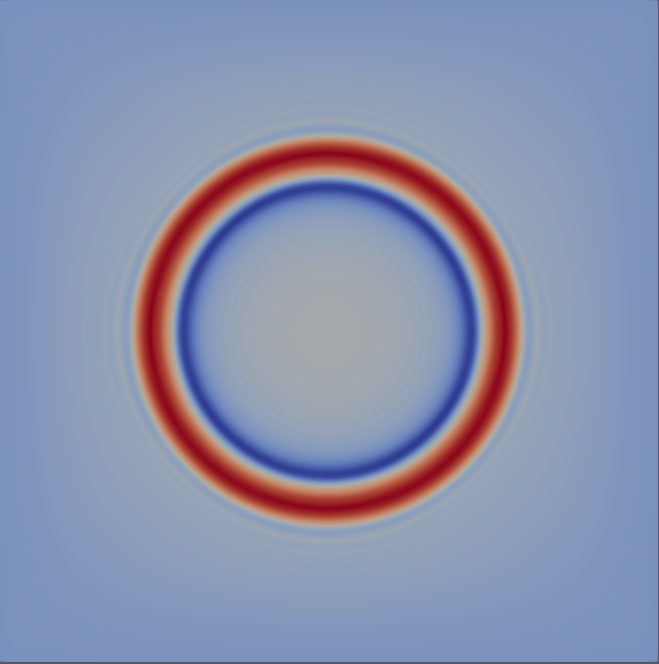}
    \subcaption{Reference solution at $T$}
    \end{subfigure}

    \begin{subfigure}{0.3\textwidth}
    \includegraphics[width=1\linewidth]{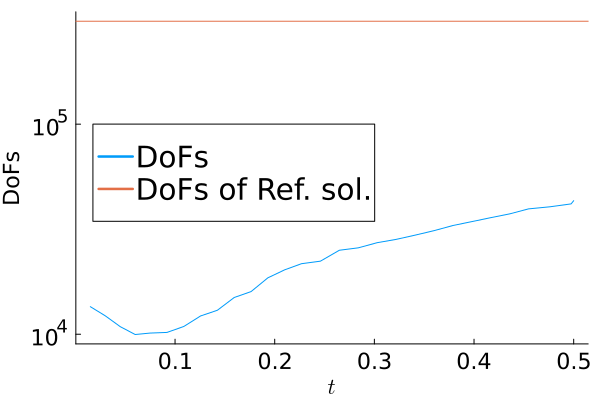}
    \subcaption{DoFs at all time nodes}
    \end{subfigure}
    \begin{subfigure}{0.3\textwidth}
    \includegraphics[width=1\linewidth]{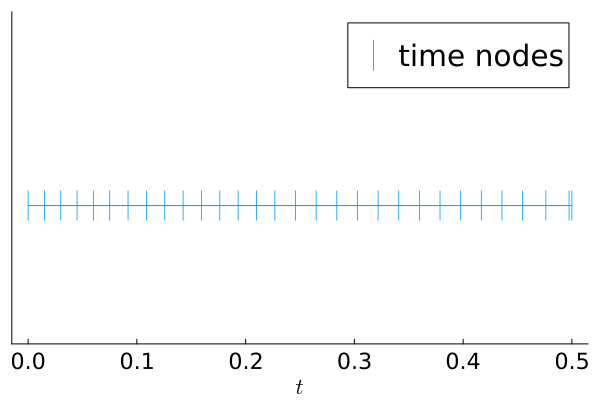}
    \subcaption{Size of all time-steps}
    \end{subfigure}
    \begin{subfigure}{0.3\textwidth}
    \includegraphics[width=1\linewidth]{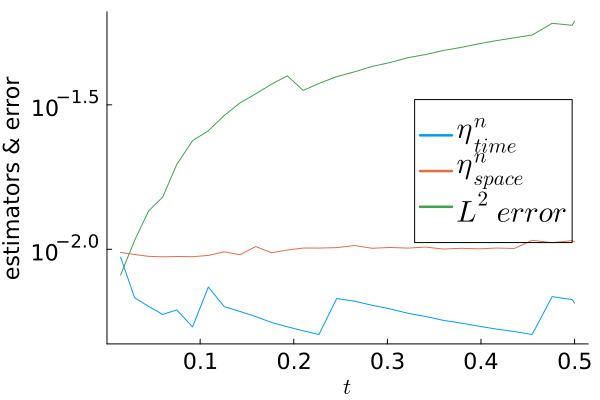}
    \subcaption{\revb{Error estimators/error}}
    \end{subfigure}
    \caption{
    Visualization of numerical solutions of the test case
    with exact solution with data~\eqref{eq:rough_test}
    computed with Algorithm \ref{alg:adaptive};
    $p=1$; $q=2$ (the other parameters are given in the text).
     (a): initial mesh $\calT_h^0$
    generated adaptively
    based on the indicator in~\eqref{eq:eta_0_local}.
     (b) and (c): Lagrangian interpolants $u_{0,h}$ and $u_{1,h}$ of $u_0$ and $u_1$
    on the mesh $\calT_h^0$.
     (d): mesh $\calT_h^N$ produced by Algorithm \ref{alg:adaptive}.
     (e): adaptive solution $U(T^-)$
    on $\calT_h^N$.
     (f): reference solution computed using
    $309123$ space-time DoFs at each time-step
    on a fine static uniform spatial mesh.
     (g): evolution of the number of space-time DoFs
    in all time intervals
    using the adaptive algorithm.
     (h): evolution of time-steps
    during the adaptive process.
     (i): evolution of the estimators during the adaptive process.}
    \label{fig:rough_adaptive}
\end{figure}

From Figure~\ref{fig:rough_adaptive}, since the initial condition~$u_1$
in~\eqref{eq:rough_test}
has a jump on $\{\sqrt{x^2+y^2}=1\}$
and~$u_0$ in~\eqref{eq:rough_test}
has compact support located at $(0,0)$,
we deduce that the initial mesh $\calT_h^0$ contains cells galore
next to those curve and point.
At the final time~$T$, the reference solution displays
the most variation
around a circle centred at $(0,0)$,
whence several cells are located near that circular layer.
The number of space-time DoFs in each time interval is decreasing
for small time indices~$n$ and then it increases,
while the time-steps do not significantly change during the evolution.
\revr{From Figure~\ref{fig:rough_adaptive} (h) and (i),
we observe that the time-steps are almost uniform.
This implies that
(\emph{i}) the spatial error estimator from the adaptive mesh refinement
is less than twice the temporal error estimator
from the uniform refinement;
(\emph{ii}) the space-time adaptive algorithm successfully
distributed the space-time errors during the evolution procedure;}
\revb{(\emph{iii})} the error increases much slower than $O(t)$,
\ie it has much better long time stability,
differently from the uniform refinement,
which leads to linear dependence on simulation time.
We refer to \cite[Section 4.1.4]{Dong-Mascotto-Wang:2024}
for a numerical illustration of the linear dependence
on the simulation time.
In Table \ref{tab:energy_dissipation_mesh_change},
we provide the relative energy dissipation produced by the adaptive algorithm with
dynamic mesh modification and a corresponding one without mesh modification having
comparable total space-time degrees of freedom. We stress that the time steps
in this example are considerably larger than the test
from Table \ref{tab:energy_dissipation}, leading to more excessive energy dissipation
for the same numerical test case. We note, however,
that the overall impact on the energy conservation is not dramatic.
For $q=2$, the mesh modification increases energy dissipation by about 3\%;
for $q=3$ by about 70\%; for $q=4$ by about 89\%.

\begin{table}[!ht]
	\centering
	\begin{tabular}{|c|c|c|c|}
		\hline
		$\frac{E(0,U)-E(T,U)}{E(0,U)}$                            & $q=2$       & $q=3$       & $q=4$            \\ \hline
		Adaptive algorithm with mesh modification & 1.99e-1 & 8.99e-2 & 7.38e-2 \\
\hline
		Adaptive algorithm without mesh modification & 1.42e-1 & 5.26e-2 & 3.89e-2 \\
\hline
	\end{tabular}
	\caption{Relative numerical energy dissipation on adaptive meshes and static meshes with, $p=1$, and $q=2,...,4$.}
	\label{tab:energy_dissipation_mesh_change}
\end{table}

Finally, in Figure \ref{fig:rough_p_compare}, we illustrate the influence of the spatial polynomial degree $p$
in our adaptive scheme with
approximately the same number of spatial elements in the initial and the final meshes:
around $8000$ and $5700$ elements, respectively.
We fix $q=2$, $k_{\text{space}}=k_{\text{time}}=10$, $\theta_c=\theta_r=0.1$ and $\delta_{\text{mesh}}=3\delta_{\text{space}}$.
For $p=1$, we choose $\delta_0=1.05\times 10^{-2}$, $\delta_{\text{time}}=\delta_{\text{space}}=3.9\times 10^{-2}$;
for $p=2$, we choose $\delta_0 = 5\times10^{-3}$, $\delta_{\text{time}}=\delta_{\text{space}}=2\times 10^{-3}$.
\begin{figure}
    \centering
    \includegraphics[width=0.24\linewidth]{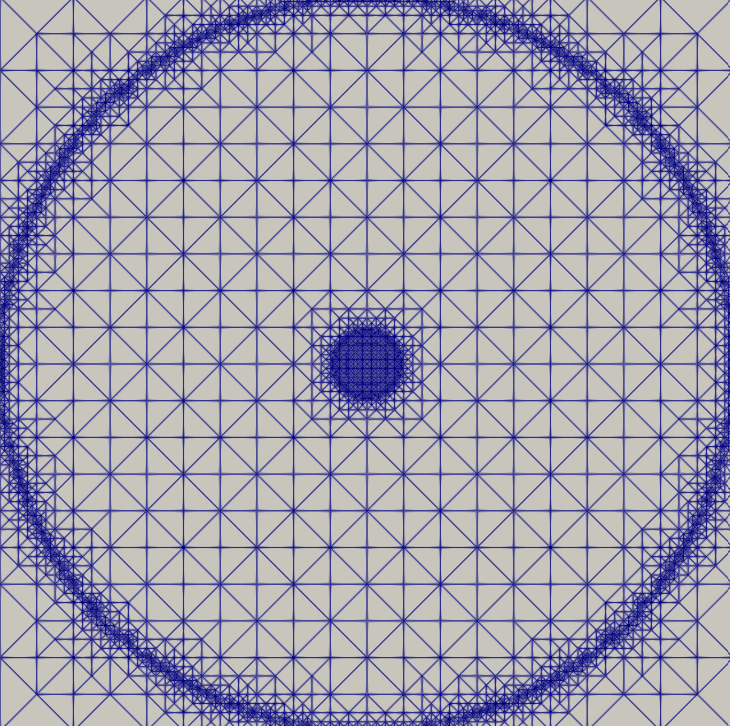}
    \includegraphics[width=0.239\linewidth]{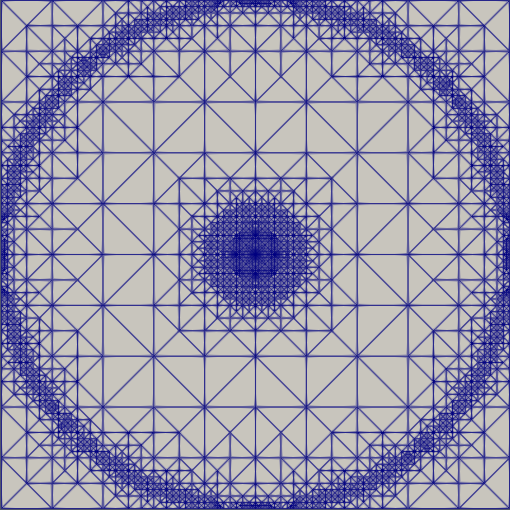}
    \includegraphics[width=0.2385\linewidth]{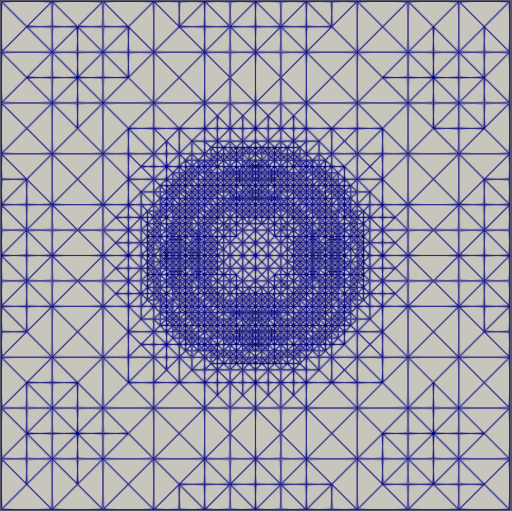}
    \includegraphics[width=0.2385\linewidth]{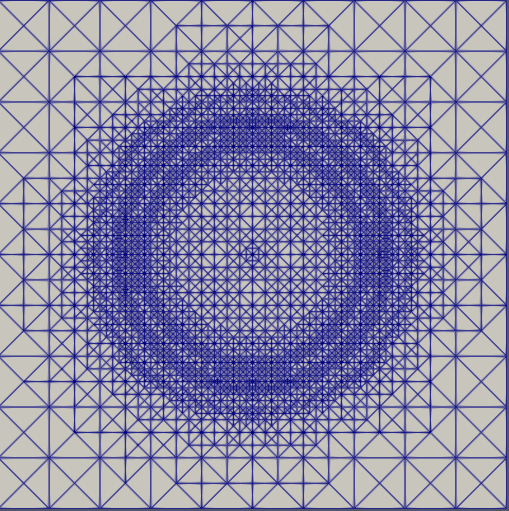}

    \includegraphics[width=0.24\linewidth]{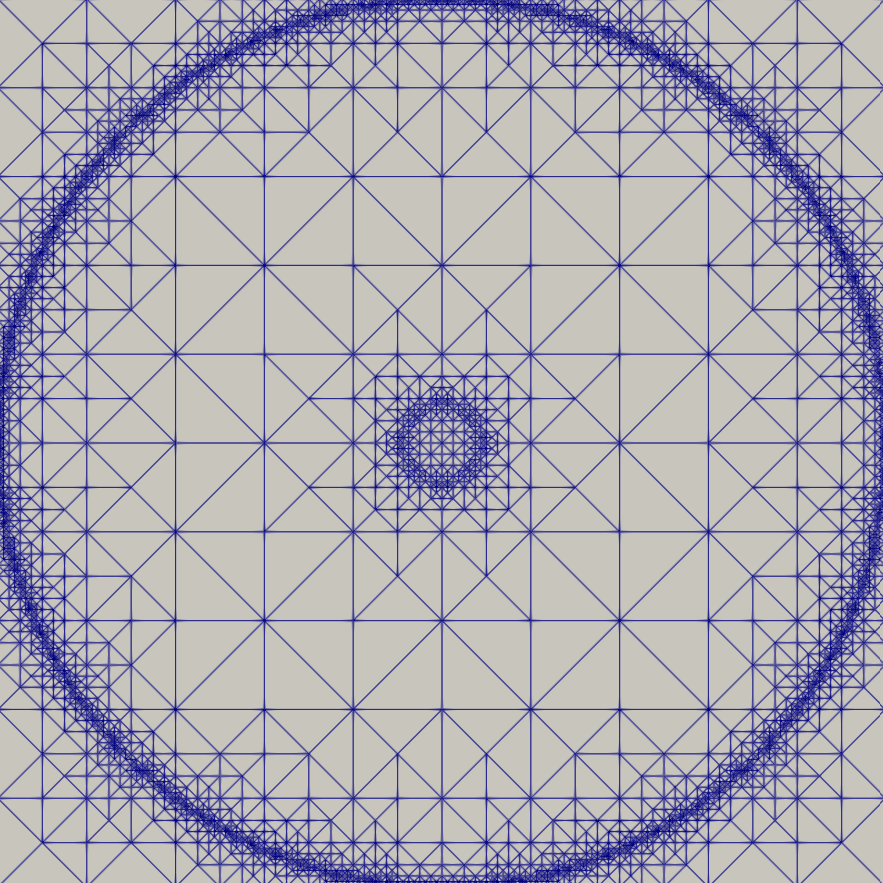}
    \includegraphics[width=0.24\linewidth]{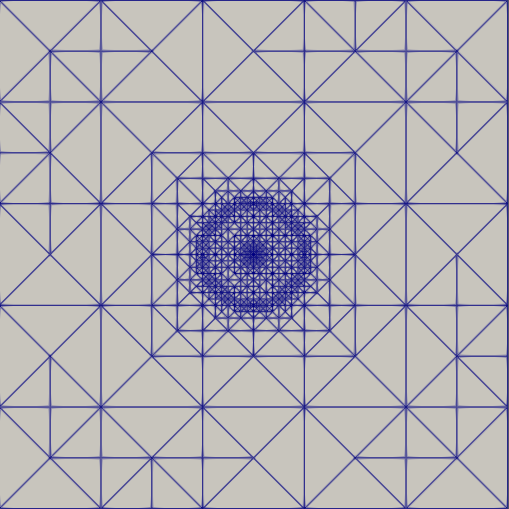}
    \includegraphics[width=0.24\linewidth]{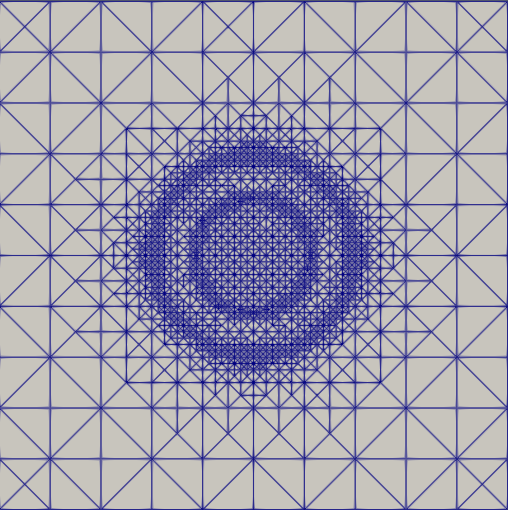}
    \includegraphics[width=0.24\linewidth]{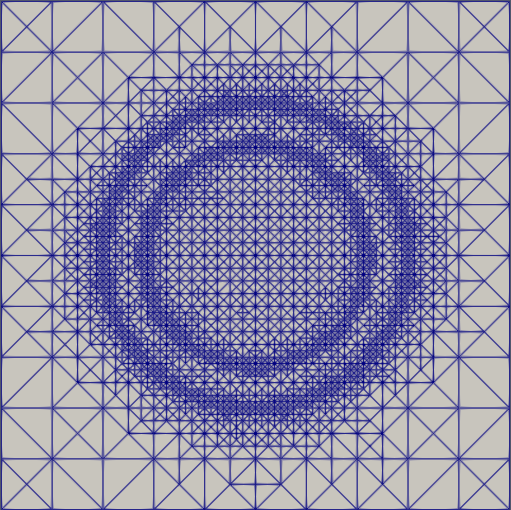}
    \caption{Visualization of dynamic mesh modification of the test case with exact solution with data as in \eqref{eq:rough_test} computed with Algorithm \ref{alg:adaptive}; $q=2$; $p$ equals $1$ and $2$ for top figures and bottom figures, respectively; $T$ equals $0$, $0.1$, $0.3$ and $0.5$ from left to right.}
    \label{fig:rough_p_compare}
\end{figure}

From Figure~\ref{fig:rough_p_compare}, we observe that the case $p=2$ captures the layers
better than the case $p=1$ with approximately the same number of cells.
Further, we observe that different $p$ lead to almost the same time-steps, which we omit here for brevity, owing to the unconditional stability of the underlying space-time method.
The evolution of the space-time DoFs has the same behaviour as in Figure \ref{fig:rough_adaptive} (g).

\section{Conclusions}\label{sec:concl}
This work has been concerned with the proof of
constant explicit \emph{a posteriori} upper bounds
in the $L^\infty(L^2)$ norm for the wave equation,
with a particular emphasis on dynamic mesh modification.
The \emph{a posteriori} error estimates required the design and analysis
of novel reconstructions, including a Hermite-type in time
and an elliptic reconstructions.
The efficiency of an error estimator has been validated in practice
on different test cases; its proof is particularly challenging
and is postponed to future investigations.
The method is non-conforming in time
so as to permit dynamic mesh changes;
this comes at the price that it dissipates the energy.
We verified practically that
for given spatial meshes and times steps,
higher order methods are less dissipative.
The proposed error estimator consists of several distinct terms
(dealing with data oscillations;
spatial mesh refinement/coarsening;
time refinement/coarsening),
some of which were used to propose
a full space-time adaptive algorithm
with dynamic mesh modification;
the corresponding effect within that adaptive algorithm,
as well as the inclusion of local time-stepping, are theoretical challenges
that deserve special attention and will be considered in the future.
Upon minor modifications,
the analysis presented above can be extended to other second order in time problems,
e.g., wave problems with a variable diffusion coefficient in space,
damped wave problems,
wave problems with other linear elliptic operators, \dots.

\paragraph*{Acknowledgements.}
ZD has been partially funded by the French National Research Agency (ANR, STEERS, project number ANR-24-CE56-0127-01).
EHG gratefully acknowledges the financial support of EPSRC (grant number EP/W005840/2).
LM has been partially funded by the European Union
(ERC, NEMESIS, project number 101115663);
views and opinions expressed are however those of the author
only and do not necessarily reflect those of the EU or the ERC Executive Agency.
LM has been partially funded by MUR (PRIN2022 research grant n. 202292JW3F).
LM is also member of the Gruppo Nazionale
Calcolo Scientifico-Istituto Nazionale di Alta Matematica (GNCS-INdAM). ZW~would like to thank Dr. Ari Rappaport for the kind discussions concerning implementation details.

\bibliographystyle{plain} 
{\footnotesize{\bibliography{bib}}}
\end{document}